 \newcommand{\R}{\ensuremath{\mathbb{R}}}
 \newcommand{\ba}{\begin{align*}}
 \newcommand{\ea}{\end{align*}}
 \DeclareMathOperator{\Vol}{Vol}
 \def\ExtendSymbol#1#2#3#4#5{\ext@arrow 0099{\arrowfill@#1#2#3}{#4}{#5}}
 \def\ExtendSymbol#1#2#3#4#5{\ext@arrow 0099{\arrowfill@#1#2#3}{#4}{#5}}
 \newcommand\longright[2][]{\ExtendSymbol{-}{-}{\rightarrow}{#1}{#2}}
 \definecolor{hao}{rgb}{1,0.5,0}
 \definecolor{miao}{cmyk}{0.5,0,0.2,0.2}
 \definecolor{qiao}{gray}{0.96}
\newtheorem{prop}{Proposition}[section]
\newtheorem{proposition}[prop]{Proposition}
\newtheorem{theorem}[prop]{Theorem}
\newtheorem{lemma}[prop]{Lemma}
\newtheorem{corollary}[prop]{Corollary}
\newtheorem{remark}[prop]{Remark}
\newtheorem{definition}[prop]{Definition}
\newtheorem{conjecture}[prop]{Conjecture}
\numberwithin{equation}{section}
 \title{Remarks of weak-compactness along K\"ahler Ricci flow}
 \author{Xiuxiong Chen\footnote{Supported by NSF grant DMS-1515795.}\;,  Bing Wang\footnote{Supported by NSF grant DMS-1510401.}}
 \date{}
\begin{document}
  \maketitle


\section{Introduction}

The purpose of this note is neighther originality nor generality, but to give a direct proof of the following weak compactness theorem(Theorem~\ref{thm:MD09_1}), using the methods developed in our previous work Chen-Wang~\cite{CW6}.  
Since the scope of this paper is limited and focused,  we are able to make the argument more streamlined and accessible.
For the readers who are interested in the original motivations, global pictures
as well as historical perspective of this problem, we refer them to Chen-Wang~\cite{CW6} for full details. 



\begin{theorem}[\textbf{Special case of Chen-Wang~\cite{CW6}}]
    Suppose $\{(M^n, g(t), J), 0 \leq t< \infty\}$ is a K\"ahler Ricci flow solution 
    \begin{align}
       \frac{\partial}{\partial t} g_{ij}= -R_{ij} + g_{ij},   \quad  \label{eqn:MD16_3}
    \end{align}
    in the class $2\pi c_1(M, J)$, where $(M, J)$ is a Fano manifold of complex dimension $n$. 
    For each $t_i \to \infty$, by taking subsequence if necessary, we have $(M, g(t_i))$ converges to a limit space $(\hat{M}, \hat{g})$ in the Gromov-Hausdorff topology. 
    The limit space space $\hat{M}$ has a regular-singular decomposition $\hat{M}=\mathcal{R} \cup \mathcal{S}$, where $\mathcal{R}$ and $\mathcal{S}$ satisfies the following properties.
    \begin{itemize}
    \item $\mathcal{R}$ is an open, smooth manifold with a complex structure $\hat{J}$ and a smooth holomorphic vector field $\nabla \hat{f}$ such that 
       \begin{align}
           Ric(\hat{g})-\hat{g}= -L_{\nabla \hat{f}} (\hat{g}).  \label{eqn:MD09_1}
       \end{align}   
       In other words, $(\mathcal{R}, \hat{g}, \hat{J}, \hat{f})$ is a K\"ahler Ricci soliton. 
    \item The Hausdorff dimension of $\mathcal{S}$ is less or equal to $2n-4$. 
    \end{itemize}
 Furthermore, the convergence topology can be improved to be the Cheeger-Gromov topology.   
 \label{thm:MD09_1}   
 \end{theorem}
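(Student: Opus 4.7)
The plan is to combine Perelman's fundamental estimates along the normalized K\"ahler-Ricci flow~\eqref{eqn:MD16_3} with an $\epsilon$-regularity theorem in the spirit of Chen-Wang~\cite{CW6}, so as to obtain the regular/singular decomposition, extract the soliton equation on the regular part, and bound the Hausdorff dimension of the singular part.

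\textbf{Step 1 (precompactness).} Perelman's estimates supply, uniformly in $t$, a scalar curvature bound $|R(g(t))|\le C$, a diameter bound $\diam(M,g(t))\le D$, and $\kappa$-non-collapsing $\Vol(B_r(x,t))\ge \kappa r^{2n}$ for $r\le 1$. They also provide a Ricci potential $u_t$ with $R_{i\bar j}-g_{i\bar j}=(u_t)_{i\bar j}$ and uniform bounds on $u_t$, $|\nabla u_t|$ and $|\Delta u_t|$. A Bishop-Gromov-type volume comparison adapted to the weighted (Bakry-Emery) setting, in which the unbounded part of the Ricci tensor is absorbed into the Ricci potential, then yields precompactness, so after passing to a subsequence $(M,g(t_i))\to(\hat M,\hat g)$ in Gromov-Hausdorff topology.

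\textbf{Step 2 ($\epsilon$-regularity and the regular set).} The analytic core is an $\epsilon$-regularity statement: there exists $\epsilon_0>0$ such that whenever $d_{GH}(B_r(x,t),B_r^{2n}(0))<\epsilon_0 r$, one has $|Rm|\le r^{-2}$ on $B_{r/2}(x,t)$. This is proved by using the bounded Ricci potential to upgrade the scalar curvature bound into full Riemann curvature control via heat-kernel estimates and Moser iteration along the flow, while Perelman's $\mathcal W$-functional precludes collapse. Let $\mathcal R\subset\hat M$ be the set where the limiting regularity radius is positive; it is open by construction, and on compact subsets of $\mathcal R$ Hamilton-Cheeger-Gromov compactness yields smooth convergence $g(t_i)\sconv\hat g$, together with smooth convergence of the complex structure to $\hat J$ and of the potentials $u_{t_i}$ to a limit $\hat f$. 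Passing to the limit in $R_{i\bar j}-g_{i\bar j}=(u_t)_{i\bar j}$ produces~\eqref{eqn:MD09_1}, while $\mathcal W$-entropy monotonicity combined with the vanishing of $\partial_t u$ in the limit forces $\nabla\hat f$ to be holomorphic, so $(\mathcal R,\hat g,\hat J,\hat f)$ is indeed a K\"ahler-Ricci soliton.

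\textbf{Step 3 (dimension estimate and Cheeger-Gromov upgrade).} The singular set $\mathcal S=\hat M\setminus\mathcal R$ consists of the points whose regularity radius vanishes. I expect the bound $\dim_{\mathcal H}\mathcal S\le 2n-4$ to be the principal obstacle: following an Anderson-Cheeger-Colding-Tian stratification adapted to the polarized flow, one argues that tangent cones at singular points cannot split an $\mathbb R^{2n-3}$ factor, since any codimension-three (or lower) splitting would, when combined with the K\"ahler condition and the $\epsilon$-regularity of Step 2, force the tangent cone to be smooth -- a contradiction. Hence singularities have codimension at least four. The promotion of the convergence to Cheeger-Gromov topology is then automatic from Step 2, which already supplies smooth convergence on compact subsets of $\mathcal R$.
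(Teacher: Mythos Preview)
Your outline names the right endpoints but skips the actual work, and the route you sketch is not the one the paper takes.

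The decisive gap is Step~2. You assert an $\epsilon$-regularity theorem (``GH-close to a Euclidean ball $\Rightarrow$ $|Rm|\le r^{-2}$'') and justify it with a sentence about heat-kernel estimates and Moser iteration. But this is precisely the statement whose proof occupies Sections~2--5 of the paper; with only a scalar curvature bound and no Ricci lower bound you cannot invoke Anderson's harmonic-radius argument, and Moser iteration alone does not convert $|R|\le C$ into $|Rm|\le C$. The paper's mechanism is entirely different: it introduces the model space $\widetilde{\mathscr{KS}}(n,\kappa)$ of mildly singular Ricci-flat spaces (Definition~\ref{dfn:GC21_1}), defines the \emph{canonical radius} $\mathbf{cr}$ and its space-time version $\mathbf{scr}$ relative to that model (Definitions~\ref{dfn:SC02_1}, \ref{dfn:MA13_1}), and then proves $\mathbf{scr}\ge\epsilon r_0$ by a blowup contradiction (Theorem~\ref{thm:MA18_1}). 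That blowup argument in turn needs metric-distortion estimates (Section~3), weak long-time two-sided pseudo-locality (Proposition~\ref{prn:GC21_10}), and Perelman's reduced-geodesic machinery (Lemmas~\ref{lma:SL14_1}--\ref{lma:SK27_4}) to show the rescaled limit actually lands in $\widetilde{\mathscr{KS}}(n,\kappa)$. None of this is present in your sketch.

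Step~3 has a related hole. To run the Cheeger--Colding--Tian stratification you need tangent cones to be \emph{metric cones}; without a Ricci lower bound this is not free. The paper obtains it from the monotonicity of Perelman's reduced volume on the static limit flow (Corollary~\ref{cly:MD24_1}), after first establishing that the regular part of the blowup limit is weakly convex via reduced geodesics (Proposition~\ref{prn:SC30_1}). Your appeal to ``Anderson--Cheeger--Colding--Tian stratification adapted to the polarized flow'' does not explain where the cone structure comes from. Finally, for the soliton equation the paper passes the $\mu$-functional minimizer $f_i$ (not the Ricci potential $u_t$) to the limit; your route through $u_t$ would also require showing the $(2,0)$-Hessian of the limit potential vanishes, which you have not addressed.
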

 
 Theorem~\ref{thm:MD09_1} answers a long-standing conjecture of K\"ahler Ricci flow on Fano manifolds.   More information of the background of this theorem can be found in Chen-Wang~\cite{CW6}. 
 For the convenience of the readers, we copy down the first written down statement of the conjecture related to Theorem~\ref{thm:MD09_1} as follows. 
  
\begin{conjecture}[Tian~\cite{Tian97}]
By taking subsequences if necessary, one should have that $(M, \omega_t)$ converges to a space $(M_{\infty}, \omega_{\infty})$, 
 which is smooth outside a subset of real Hausdorff codimension at least $4$, in the Cheeger-Gromov-Hausdorff topology. Furthermore, $(M_{\infty}, \omega_{\infty})$ can be expanded to be an obstruction triple $(M_{\infty}, v, \xi)$(possibly singular)
 satisfying:
 \begin{align*}
   Ric(\omega_{\infty}) -\omega_{\infty}=-L_{v}(\omega_{\infty}),  \quad \textrm{on the regular part of} \; M_{\infty}, 
 \end{align*}
 where $L_{v}$ denotes the Lie derivative in the direction of $v$. In particular, $(M_{\infty}, \omega_{\infty})$ is a Ricci soliton if $\omega_{\infty}$ is not K\"ahler-Einstein. 
\label{cje:MD03_1} 
\end{conjecture}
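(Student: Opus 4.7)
The plan is to combine Perelman's fundamental estimates for the flow \eqref{eqn:MD16_3} with an $\varepsilon$-regularity theorem tailored to the \Kahler Ricci flow. Perelman's work provides uniform bounds on $|R|$ and on $\diam(M, g(t))$, as well as a scale-invariant non-collapsing constant $\kappa > 0$ on scales $\leq 1$ coming from the lower bound of $\mu$-entropy; the volume $\Vol(M, g(t))$ is topologically fixed. Gromov's pre-compactness theorem then produces a Gromov-Hausdorff subsequential limit $(\hat{M}, \hat{g})$, which is a compact metric space.

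The crux is an $\varepsilon$-regularity statement of the following form: there exist $\varepsilon_0, C > 0$ such that whenever the Gromov-Hausdorff distance $d_{GH}(B_r(x, g(t)), B_r^{2n}(0)) < \varepsilon_0 r$, one has $|Rm|(y,s) \leq C r^{-2}$ on a concentric half-sized parabolic ball, together with analogous bounds on all derivatives. I would derive this by combining (i) Perelman's pseudolocality theorem, which supplies a local curvature bound once the initial isoperimetric constant is near-Euclidean, (ii) Gaussian heat-kernel bounds for the conjugate heat equation, derived from the $\mu$-entropy, which upgrade Gromov-Hausdorff closeness to an effective almost-Euclidean isoperimetric inequality through a segment-inequality argument, and (iii) the \Kahler condition, which relates the Ricci curvature to the Ricci potential and permits a Moser iteration of $|Rm|$ once suitable $L^p$ bounds are available.

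Armed with $\varepsilon$-regularity, I would define $\mathcal{R}$ to be the set of $\hat{x} \in \hat{M}$ for which some ball $B_r(\hat{x})$ is $\varepsilon_0 r$-close in Gromov-Hausdorff distance to $B_r^{2n}(0)$; then $\mathcal{R}$ is open and the convergence on it automatically improves from Gromov-Hausdorff to smooth Cheeger-Gromov. To bound $\dim_{\mathcal{H}}(\mathcal{S})$ with $\mathcal{S} = \hat{M} \setminus \mathcal{R}$, I would apply a Cheeger-Colding-type stratification: Perelman's reduced volume monotonicity, applied at approximating space-time base points, forces every tangent cone at $\hat{x} \in \mathcal{S}$ to be a metric cone $C(Y)$, and the induced parallel complex structure on the smooth stratum of the tangent cone constrains any Euclidean splitting factor to have even real dimension. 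Since the $\varepsilon$-regularity rules out codimension-$2$ singular behaviour, a standard covering estimate yields $\dim_{\mathcal{H}}(\mathcal{S}) \leq 2n - 4$.

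Finally, to extract the soliton structure on $\mathcal{R}$, I would pass to the limit in the Ricci potential form of \eqref{eqn:MD16_3}: the boundedness and monotonicity of Perelman's $\mathcal{W}$-entropy, whose time derivative is precisely the squared defect from being a \Kahler Ricci soliton, force this defect to tend to zero in an $L^2$-averaged sense along the sequence $t_i$. Combined with the smooth convergence on $\mathcal{R}$, this produces \eqref{eqn:MD09_1} and the holomorphicity of $\nabla \hat{f}$, with $\hat{f}$ the smooth limit of the normalised Ricci potentials $u_{t_i}$. The main obstacle is the $\varepsilon$-regularity step: the absence of a two-sided Ricci bound precludes the direct application of classical Cheeger-Colding theory, so one must synthesise regularity from the analytic estimates furnished by the flow (heat kernel, pseudolocality, entropy) together with the rigidity of the \Kahler condition, which is precisely the technical heart of Chen-Wang~\cite{CW6}.
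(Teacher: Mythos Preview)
Your outline matches the paper at the coarsest level (Perelman estimates $\Rightarrow$ GH compactness $\Rightarrow$ regularity theorem $\Rightarrow$ dimension bound $\Rightarrow$ soliton via entropy), but the mechanism you propose for the regularity step is not the one the paper uses, and your sketch of it does not close. The paper does \emph{not} prove an $\varepsilon$-regularity of the form ``$B_r(x)$ GH-close to $B_r^{2n}(0)$ $\Rightarrow$ $|Rm|\le Cr^{-2}$'' and then plug it into Cheeger--Colding stratification. Instead it introduces a \emph{canonical radius} $\mathbf{cr}$ (and a space-time version $\mathbf{scr}$), defined through a volume-ratio threshold together with density and connectivity estimates, all calibrated against a model moduli $\widetilde{\mathscr{KS}}(n,\kappa)$ of mildly singular Ricci-flat K\"ahler spaces (Definition~\ref{dfn:GC21_1}, Definition~\ref{dfn:SC02_1}). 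The a~priori lower bound on $\mathbf{scr}$ (Theorem~\ref{thm:MA18_1}) comes from a blowup-contradiction: rescale so that $\mathbf{scr}=2$ at base and $\geq 1$ nearby; since $|R|\to 0$ under rescaling, one shows---via metric-distortion estimates built from auxiliary heat solutions (Theorem~\ref{thm:MA16_1}, Lemma~\ref{lma:MA16_2}), a weak two-sided long-time pseudolocality (Proposition~\ref{prn:GC21_10}), and weak convexity of $\mathcal{R}$ through Perelman's reduced geodesics (Proposition~\ref{prn:SC30_1})---that the limit sits in $\widetilde{\mathscr{KS}}(n,\kappa)$, where by construction $\mathbf{scr}=\infty$. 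That contradicts weak continuity of $\mathbf{scr}$ (Proposition~\ref{prn:MD03_2}).

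The derivation of $\varepsilon$-regularity you sketch has concrete gaps. Perelman's pseudolocality is a \emph{forward} statement: it requires an almost-Euclidean isoperimetric constant on an \emph{initial} slice and returns curvature control at later times; it gives you nothing from a GH-closeness hypothesis at the same time $t$, and there is no backward pseudolocality available a~priori---indeed, the paper has to build backward pseudolocality into the very definition of $\mathbf{scr}$ and then justify it a~posteriori via the contradiction argument. Converting GH-closeness to an almost-Euclidean isoperimetric inequality through a segment inequality requires a lower Ricci bound, which is absent here; Gaussian heat-kernel bounds coming from the $\mu$-entropy give on-diagonal control but not the Laplacian comparison that drives segment-type arguments. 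And Moser iteration on $|Rm|$ needs an initial $L^p$ bound with $p>n$ (or some substitute), which nothing in your list supplies. The paper sidesteps all of these obstacles by replacing GH-closeness with the intrinsic volume-ratio scale $\mathbf{vr}$, and by arranging matters so that the contradiction argument only has to verify the six defining properties of the model space on the blowup limit---where the K\"ahler condition enters solely to secure codimension four inside $\widetilde{\mathscr{KS}}(n,\kappa)$ (Remark~\ref{rmk:MD02_1}).
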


The above statement follows the exact words of Conjecture 9.1 of Tian~\cite{Tian97}.  In the same paper, concerning the convergence of the K\"ahler Ricci flow, Tian wrote down the following words on page 36. 
 
 \textit{Previously, R. Hamilton thought that the limit $(M_{\infty}, \omega_{\infty})$ should be a Ricci soliton.  Our new observation here is that $\omega_{\infty}$ may be K\"ahler-Einstein, and otherwise, it is a special Ricci soliton.}
 
 \noindent
 Note that K\"ahler-Einstein metrics are trivial Ricci soliton metrics.  The convergence part of Conjecture~\ref{cje:MD03_1} was solved in Chen-Wang~\cite{CW6}, as a special case of weak compactness theory of polarized K\"ahler Ricci flows. 
 Recall that the objects of the weak-compactness in Chen-Wang~\cite{CW6} are the ``flows", not only ``time slices".  
 For the purpose of flow compactness and to show that the limit $M_{\infty}$ is a projective variety, one cannot avoid the application of Bergman kernel, as done in Donadson-Sun~\cite{DS}, at least in the current stage.
 The extra structures, i.e., the flow structure, the line bundle structure and the variety structure, of the limit are of essential importance for further applications, like the flow proof of Yau's stability conjecture. 
 This point was  emphasized on page 2 of  Chen-Sun-Wang~\cite{CSW}. 
 However, if one only want to prove the convergence of the time slices of the  K\"ahler Ricci flow, i.e., the convergence part of Conjecture~\ref{cje:MD03_1}, or Theorem~\ref{thm:MD09_1}, then a much simpler proof can be given.
 This is the purpose of this paper.    Since Theorem~\ref{thm:MD09_1} does not involve the variety structure and line bundle structure,  its requirement of K\"aher geometry is very limited(c.f. Remark~\ref{rmk:MD02_1}).
 No new idea beyond Chen-Wang~\cite{CW6} is needed. \\

 Theorem~\ref{thm:MD09_1} is a ``regularity improvement" theorem. 
 In fact,  because of the breakthrough of Perelman~\cite{Pe1}, along the K\"ahler Ricci flow, it is well known that diameter, scalar curvature and non-collapsing constant are all uniformly bounded(c.f. Perelman's results written down by Sesum-Tian~\cite{SeT}).
 On the other hand, based on the work of Chen-Wang~\cite{CW5} and Q.S. Zhang~\cite{Zhq3}, we have uniform non-inflating condition.  
 Namely, for each $r \in (0, 1)$ and $(x,t) \in M \times [0, \infty)$,  there is a uniform $\kappa$ independent of $r$ and $(x,t)$ such that
 \begin{align}
   \kappa \leq   \omega_{2n}^{-1}r^{-2n} |B(x, r)|_{g(t)} \leq \kappa^{-1}       \label{eqn:MD10_1}
 \end{align}
 along the flow, where $\omega_{2n}$ is the volume of unit ball in $\R^{2n}$. 
 Since K\"ahler Ricci flow in $2\pi c_1(M, J)$ preserves volume, it follows from standard ball packing argument that $(M, g(t_i))$ converges(by passing to subsequence if necessary) to a limit compact length space
 $(\hat{M}, \hat{g})$ in Gromov-Hausdorff topology. 
 Theorem~\ref{thm:MD09_1} basically says that both $\hat{M}$ and the convergence topology to $\hat{M}$ have good regularity. 
 Therefore, it is a ``regularity improvement" theorem in nature. \\

 The proof of  Theorem~\ref{thm:MD09_1} consists of two basic steps:
   
 \textit{Step 1. Develop the rough weak compactness under the canonical radius assumption.}
 
 \textit{Step 2. Using the intrinsic Ricci flow structure to obtain precise weak compactness and obtain the a priori estimate of canonical radius.}
 
 The  ``canonical radius" (with respect to some singular model space) is a crucial new ingredient in Chen-Wang~\cite{CW6}. 
 Roughly speaking, we need to first choose a model space with compact moduli under proper topology. 
 After the choice of model space,  the canonical radius can be regarded as the largest scale such that the manifold can be well-approximated by model space. 
 For example, one can regard the harmonic radius(c.f. Anderson~\cite{An90}) as the canonical radius with respect to the model space $\R^{m}$. 
 In Perelman's fundamental work~\cite{Pe1},  the model space of 3-dimensional Ricci flows is the $\kappa$-solution, i.e., the $\kappa$-noncollapsed, ancient Ricci flow solution with nonnegative sectional curvature(c.f. section 11 of Perelman~\cite{Pe1}).
 Note that 3-dimensional $\kappa$-solution's moduli space is compact  under the smooth topology.
 Using $\kappa$-solution as model space, one can define canonical radius of a space-time point with respect to $\kappa$-solution and smooth topology.   
 Then the ``canonical neighborhood" theorem, i.e.,  Theorem 12.1 of Perelman~\cite{Pe1},  can be understood as the canonical radius of a high curvature point in the 3-dimensional Ricci flow is uniformly bounded from below.
 It is this ``canonical neighborhood" theorem that motivates us to use the term ``canonical radius".  
 
 However, before Chen-Wang~\cite{CW3}, \cite{CW6}, all the model spaces and related topology are smooth. 
 In the first paper of Chen-Wang~\cite{CW3}, we used the K\"ahler-Ricci-flat surface orbifolds as model space and pointed-Cheeger-Gromov topology as the proper topology to study the convergence of K\"ahler Ricci flow on Fano surfaces. 
 The compactness of the moduli of noncollapsed K\"ahler-Ricci-flat surface orbifolds(c.f. Anderson~\cite{An05}) plays an important role in Chen-Wang~\cite{CW3}. 
 In the second paper of Chen-Wang~\cite{CW6}, the name of ``canonical radius" was written down explicitly.   
 An essential step beyond the first paper is to figure out the exact model space, which does not exist in literature before the second paper Chen-Wang~\cite{CW6}.  The discovery of the model space was lead by the following speculation:
  
  \textit{It should be very hard to make difference between the blowup limits from K\"ahler Einstein metrics and the K\"ahler Ricci flows with bounded scalar curvature, since both of them should be scalar flat and consequently Ricci-flat.}
  
 \noindent
 Such motivation was already explained in the first paper of Chen-Wang~\cite{CW3}, at the end of section 2. 
 From the above speculation,  the model space should have all the properties that a K\"ahler Einstein limit space have. However,  it should be slightly bigger than the space of K\"ahler Einstein limit spaces, since the space of  K\"ahler Ricci flows
  is bigger than the space of  K\"ahler Einstein manfiolds.  
  Along this route, we finally found that the natural model space should be the non-collapsed K\"ahler Ricci flat manifolds with mild singularities, 
  whose precise definition can be found in Definition~\ref{dfn:GC21_1}. 
  The relationship between the model space and K\"ahler Einstein blowup limit space and K\"ahler Einstein manifolds are illustrated in Figure~\ref{fig:threemoduli}. 
  Among the defining properties of the model space,  the following three are crucial:
  
  \begin{itemize}
  \item Gap between regular part $\mathcal{R}$ and singular part $\mathcal{S}$.
  \item High codimension($>4-\epsilon$) of singular part $\mathcal{S}$. 
  \item Convexity of regular part $\mathcal{R}$. 
  \end{itemize}
  
  \noindent
  Each of the above listed property corresponds to an estimate of spaces very close (in the Cheeger-Gromov topology) to the model space(c.f. Figure~\ref{fig:propertyestimate}). 
  Such estimates are used to define the canonical radius(c.f. Definition~\ref{dfn:SC02_1}).  
  Not surprisingly, the canonical radius of a point in the model space should be infinite(c.f. Remark~\ref{rmk:MD24_1}). 
  If canonical radius is uniformly bounded from below by $1$ for a space sequence, then the sequence(up to taking subsequences) has a weak limit $\bar{M}$ in the Cheeger-Gromov topology, as application of estimates on
  the right side of Figure~\ref{fig:propertyestimate}.   This is the key of the first step of the proof of Theorem~\ref{thm:MD09_1}.

\begin{figure}
 \begin{center}
 \psfrag{A}[c][c]{\color{red} Geometry on model space}
 \psfrag{B}[c][c]{\color{blue} Analysis on approximating manifolds}
 \psfrag{C}[c][c]{\color{red} Gap between $\mathcal{R}$ and $\mathcal{S}$}
 \psfrag{D}[c][c]{\color{red} High codimension of $\mathcal{S}$}
 \psfrag{E}[c][c]{\color{red} Convexity of $\mathcal{R}$}
 \psfrag{F}[c][c]{\color{blue} Regularity estimate}
 \psfrag{G}[c][c]{\color{blue} Density estimate}
 \psfrag{H}[c][c]{\color{blue} Connectivity estimate}
 \includegraphics[width=0.8 \columnwidth]{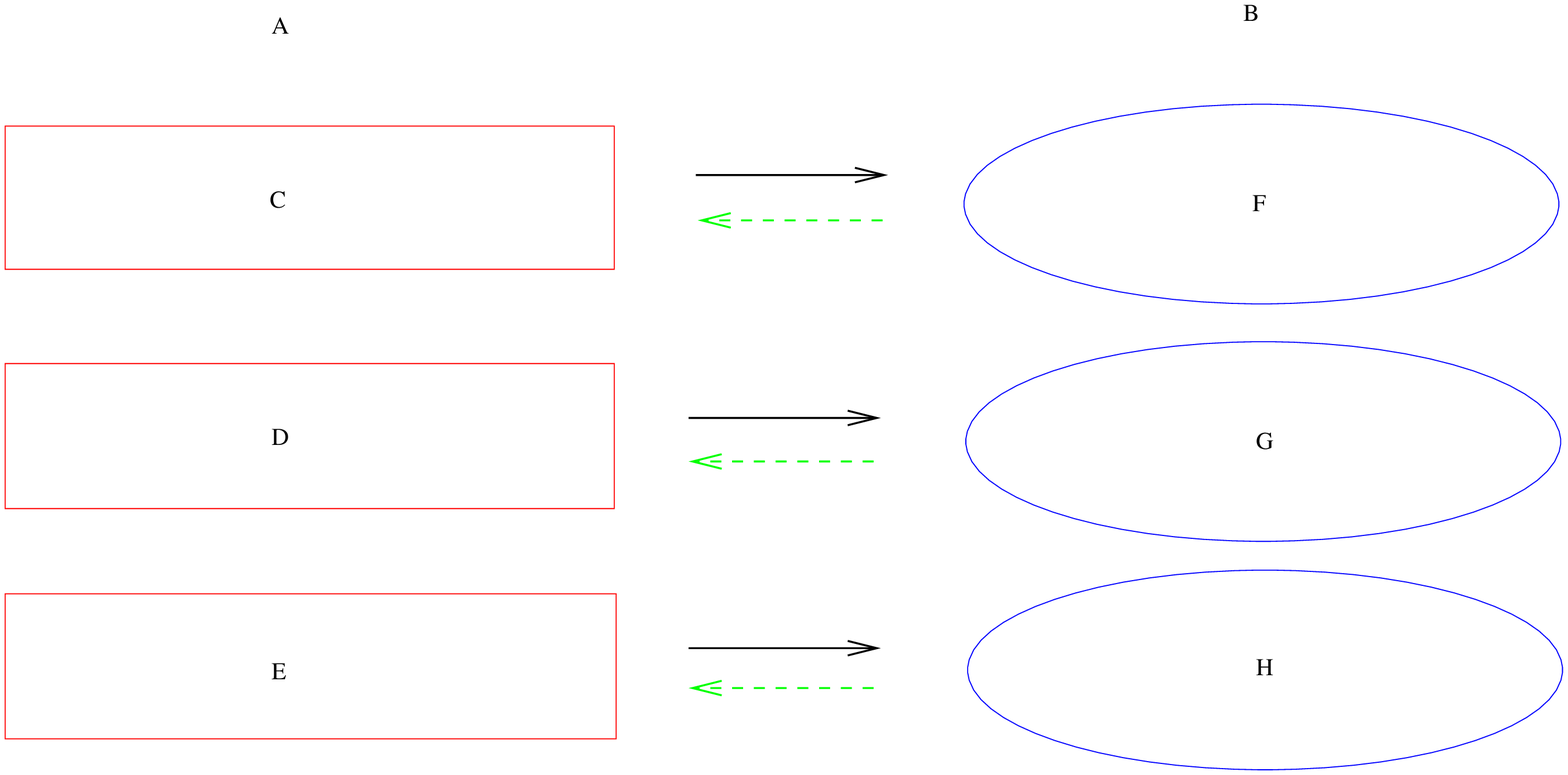}
 \caption{Geometry property and analysis estimate}
 \label{fig:propertyestimate}
 \end{center}
 \end{figure}

  Then we discuss the second step of the proof of Theorem~\ref{thm:MD09_1}, i.e., developing the uniform lower bound of canonical radius(or its generalized version) along the K\"ahler Ricci flow.  
  This bound is carried out by a contradiction argument.   
  For otherwise, one can find a sequence whose canonical radius is tending to zero. 
  Applying a point-selecting technique and rescaling argument, we obtain a sequence of Ricci flows whose canonical radii at base points are all  $2$ and nearby canonical radii are at least $1$.
  Moreover, scalar curvature tends to $0$. 
  Let $(M_i, x_i, g_i(0))$ be central time slices of such a sequence.  It has a limit space $(\bar{M}, \bar{x}, \bar{g})$ by the uniform lower bound of canonical radius.  
  Using canonical radius assumption, $\bar{M}$ has only weak version of the geometry property on the left side of Figure~\ref{fig:propertyestimate}.
  For example, by connectivity estimate, we only know that the regular part of $\bar{M}$ is $3$-connected(c.f. Proposition~\ref{prn:MD16_1}).
  In other words, every two points  $x, y \in \mathcal{R}(\bar{M})$ can be connected by a smooth curve in $\mathcal{R}$ whose length is less than $3d(x,y)$.  
  In order $\bar{M}$ to locate in the model space, we need the length of the curve to be exactly $d(x,y)$. 
  The intrinsic geometry of the Ricci flows,  e.g.,  the reduced geodesic, reduced distance, and reduced volume  of Perelman~\cite{Pe1} are essentially used to improve the regularity of $\bar{M}$.
  Applying such intrinsic geometry,  all the geometric properties on the left side of  Figure~\ref{fig:propertyestimate} hold. 
  Therefore, $\bar{M}$ locates in the model space and $\bar{x}$ has infinity canonical radius.   This is a contradiction by the weak continuity of canonical radius under the Cheeger-Gromov topology(c.f. Proposition~\ref{prn:MD03_2}). 
  Technically, in order to apply the intrinsic geometry of the Ricci flow, we need metric distortion estimate(c.f. Lemma~\ref{lma:MA13_2}, Remark~\ref{rmk:MD28_3}) 
  and weak long-time two-sided pseudo-locality(c.f. Proposition~\ref{prn:GC21_10}, Remark~\ref{rmk:MD28_3}). 
  Note that  the K\"ahler condition is only  used in the compactness of  the moduli of model space.
  However, with Cheeger-Naber~\cite{CN2}, the method of Chen-Wang~\cite{CW6} can be generalized over to the Riemannian case seamlessly(c.f. Remark~\ref{rmk:MD09_1}, 
  Remark~\ref{rmk:GC23_2} and Remark~\ref{rmk:MD02_1}),
  for the purpose of studying the time slice weak compactness of the non-collapsed Ricci flows with bounded scalar curvature.
  Largely following the framework of Chen-Wang~\cite{CW6},  interesting progress in this generalization was considered in a recent work of Bamler~\cite{Bamler}. \\

  The structure of this paper is as follows.  In section 2, we describe the precise definition of model space and canonical radius, together with some other auxiliary definitions.
  In section 3, we discuss the principle to obtain metric distorsion, which originates from section 5.3 of Chen-Wang~\cite{CW6}. 
  In section 4 and section 5, we write down the details of Step 1 and Step 2 described above.   Finally, we finish the proof of Theorem~\ref{thm:MD09_1} in section 6.

\section{Model space and canonical radii}

Let $\mathscr{KS}(n,\kappa)$ be the moduli of complex $n$-dimensional K\"ahler Ricci-flat manifolds with asymptotic volume ratio at least $\kappa$.
Clearly, $\mathscr{KS}(n,\kappa)$  is not compact under the pointed-Gromov-Hausdorff topology.
It can be compactified as a space $\overline{\mathscr{KS}}(n,\kappa)$.
 However, this may not be the largest space that one can develop weak-compactness theory. 
So we extend the space $\overline{\mathscr{KS}}(n,\kappa)$ further to a possibly bigger compact space $\widetilde{\mathscr{KS}}(n,\kappa)$, which is defined as follows.
Note that for convenience of notations, we denote $m=2n$.   In short,  \textbf{$m$ is the real dimension, $n$ is the complex dimension.}

\begin{figure}
 \begin{center}
 \psfrag{A}[c][c]{$\color{green}\mathscr{KS}(n,\kappa)$}
 \psfrag{B}[c][c]{$\color{blue}\overline{\mathscr{KS}}(n,\kappa)$}
 \psfrag{C}[c][c]{$\color{red} \widetilde{\mathscr{KS}}(n,\kappa)$}
 \includegraphics[width=0.5 \columnwidth]{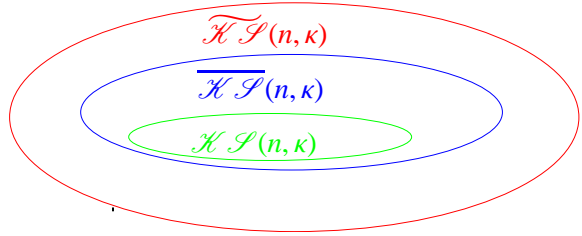}
 \caption{Relations among different moduli spaces}
 \label{fig:threemoduli}
 \end{center}
 \end{figure}

 \begin{definition}(c.f. Definition 2.1. of Chen-Wang~\cite{CW6})
 Let $\widetilde{\mathscr{KS}}(n,\kappa)$ be the collection of length spaces $(X,g)$ with the following properties.
\begin{enumerate}
  \item  $X$ has a disjoint regular-singular decomposition $X=\mathcal{R} \cup \mathcal{S}$, where $\mathcal{R}$ is the regular part,  $\mathcal{S}$ is the singular part.
   A point is called regular if it has a neighborhood which is isometric to a totally geodesic convex domain of  some smooth Riemannian manifold.  A point is called singular
   if it is not regular.
  \item  The regular part $\mathcal{R}$ is a nonempty, open K\"ahler Ricci-flat manifold of complex dimension $n$, real dimension $m$.
  \item  $\mathcal{R}$ is weakly convex, i.e., for every point $x \in \mathcal{R}$, there exists a measure zero set
    $\mathcal{C}_x$ such that every point in $X \backslash \mathcal{C}_x$ can be connected to $x$
    by a unique shortest geodesic in $\mathcal{R}$.   For convenience, we call $\mathcal{C}_x$ as the cut locus of $x$.
  \item $\dim_{\mathcal{M}} \mathcal{S} < m-3$, where $\mathcal{M}$ means Minkowski dimension.
  \item Let $\mathrm{v}$ be the volume density function, i.e.,
        \begin{align}
          \mathrm{v}(x) \triangleq \lim_{r \to 0} \frac{|B(x,r)|}{\omega_{m} r^{m}}   \label{eqn:SC16_1}
        \end{align}
        for every $x \in X$. Then $\mathrm{v}\equiv 1$ on $\mathcal{R}$ and $\mathrm{v} \leq 1-2\delta_0$ on $\mathcal{S}$.
        In other words, the function $\mathrm{v}$ is a criterion function for singularity.
        Here $\delta_0$ is the Anderson constant.
  \item The asymptotic volume ratio $\mathrm{avr}(X) \geq \kappa$. In other words, we have
   \begin{align*}
      \lim_{r \to \infty} \frac{|B(x,r)|}{\omega_{m}r^{m}} \geq \kappa
   \end{align*}
    for every $x \in X$.
\end{enumerate}
\label{dfn:GC21_1}
\end{definition}

The relationships among the three moduli spaces $\mathscr{KS}(n,\kappa)$, $\overline{\mathscr{KS}}(n,\kappa)$ and $\widetilde{\mathscr{KS}}(n,\kappa)$ can be seen from Figure~\ref{fig:threemoduli}. 
Note that $\mathscr{KS}(n,\kappa)$ is the objects of study in the  traditional Cheeger-Colding theory. 
By taking completion, many estimates in $\mathscr{KS}(n,\kappa)$ can be carried over to $\overline{\mathscr{KS}}(n,\kappa)$. 
A key observation of Chen-Wang~\cite{CW6} is that although $\widetilde{\mathscr{KS}}(n,\kappa)$ is not the completion of $\mathscr{KS}(n,\kappa)$, we can still obtain essential a priori estimates for
$\widetilde{\mathscr{KS}}(n,\kappa)$, because the singularities of each space in $\widetilde{\mathscr{KS}}(n,\kappa)$ are very mild.
The methods of Cheeger-Colding can be applied on $\widetilde{\mathscr{KS}}(n,\kappa)$ following mostly the route of the original Cheeger-Colding theory. 
The property 3, 4 and 5 in Definition~\ref{dfn:GC21_1} are the essential reasons why the singularities are mild. 
Furthermore, by extending the study objects from $\mathscr{KS}(n,\kappa)$ to $\widetilde{\mathscr{KS}}(n,\kappa)$,  many theorems can be stated more precisely, as done in section 2 of Chen-Wang~\cite{CW6}.   
In particular, we have the following theorems.

\begin{theorem}(c.f. Theorem 1.1. of Chen-Wang~\cite{CW6})
 $\widetilde{\mathscr{KS}}(n,\kappa)$ is compact under the pointed Cheeger-Gromov topology.
Moreover, each space $X \in \widetilde{\mathscr{KS}}(n,\kappa)$ is a K\"ahler Ricci-flat conifold in the sense of Chen-Wang~\cite{CW6}. 
\label{thm:GC21_2}
\end{theorem}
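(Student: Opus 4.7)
The plan is to prove compactness in three stages: first extract a pointed Gromov--Hausdorff subsequential limit, then upgrade the convergence to Cheeger--Gromov by an $\epsilon$-regularity dichotomy on the Ricci-flat regular part, and finally verify that the limit still satisfies all six defining properties of $\widetilde{\mathscr{KS}}(n,\kappa)$, which simultaneously gives the conifold structure.

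For the first stage, pointed GH precompactness follows from uniform volume doubling. Because $\mathcal{S}$ has Minkowski dimension strictly less than $m-3$ (property 4), the Ricci-flat open set $\mathcal{R}$ has full measure, and Bishop--Gromov volume monotonicity on $\mathcal{R}$ extends to the whole length space by approximation. Combined with $\mathrm{v}\le 1$ globally (which forces the volume ratio to be at most $1$) and $\mathrm{avr}\ge\kappa$ (property 6), this pinches $r\mapsto |B(x,r)|/(\omega_m r^m)$ between $\kappa$ and $1$ uniformly in $x$ and in the space, yielding uniform doubling, and hence a GH-convergent subsequence by Gromov's theorem.

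For the second stage, I would argue via the volume gap. For any $y_\infty\in X_\infty$, Colding's volume convergence (valid here since the approximators are Ricci-flat off a codimension $>3$ set and therefore fall under the Cheeger--Colding framework) gives $\mathrm{v}(y_j)\to \mathrm{v}(y_\infty)$ along any approximating sequence $y_j\in X_j$. Property (5) then forces a dichotomy: either $\mathrm{v}(y_j)=1$ eventually, in which case Anderson's $\epsilon$-regularity for Ricci-flat manifolds with volume ratio close to $1$ supplies a uniform harmonic radius lower bound at $y_j$ and elliptic bootstrapping promotes local GH convergence to smooth convergence of K\"ahler Ricci-flat metrics (with the complex structures converging simultaneously by the integrability theorem); or $\mathrm{v}(y_j)\le 1-2\delta_0$ infinitely often, forcing the density gap to persist in the limit. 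This produces the smooth open piece $\mathcal{R}_\infty$, identifies $\mathcal{S}_\infty$ as the complement, and delivers the Cheeger--Gromov upgrade away from $\mathcal{S}_\infty$.

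The third and most delicate stage is checking that the limit actually lies in $\widetilde{\mathscr{KS}}(n,\kappa)$. Ricci-flatness and the K\"ahler structure on $\mathcal{R}_\infty$ (property 2), the density criterion (5), and the asymptotic volume ratio bound (6) all pass through by the previous paragraph together with lower semicontinuity of $\mathrm{avr}$. The serious obstacles are (3) weak convexity of $\mathcal{R}_\infty$ and (4) the Minkowski dimension bound $\dim_{\mathcal{M}}\mathcal{S}_\infty < m-3$; neither is formal under GH limits. For (4) I expect the correct route to be a quantitative covering argument: the uniform $\epsilon$-regularity of the previous step, combined with property (4) holding in every $X_j$, gives harmonic radius lower bounds off a $(m-3)$-Minkowski-thin exceptional set in each $X_j$, and a Vitali/packing transfer produces the same bound in $X_\infty$. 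For (3) the strategy is to construct minimizing geodesics in $\mathcal{R}_\infty$ connecting a given $x$ to a generic $y$ as GH limits of shortest geodesics in $\mathcal{R}_j$ avoiding their respective cut loci; the high codimension of $\mathcal{S}_j$ prevents a positive-measure set of limit geodesics from collapsing into singularities, which is exactly what ``weakly convex'' requires. Once all six properties are established, each limit is again a K\"ahler Ricci-flat space with singular set of codimension strictly greater than $3$ and weakly convex regular part; the conifold assertion then follows by iterating the same tangent-cone analysis at singular points: each tangent cone is a metric cone (Cheeger--Colding) in $\widetilde{\mathscr{KS}}(n,\kappa)$ by scaling invariance, and the resulting stratification realizes the K\"ahler Ricci-flat conifold structure in the sense of Chen--Wang~\cite{CW6}.
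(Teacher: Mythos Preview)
The paper does not actually prove this theorem; it is quoted from Chen--Wang~\cite{CW6} and only commented upon (the paragraph after the statement describes the ``regularity improvement'' phenomenon, e.g.\ that one can upgrade $\dim_{\mathcal{M}}\mathcal{S}<m-3$ to $\dim_{\mathcal{M}}\mathcal{S}<m-4+\epsilon$ and weak convexity of $\mathcal{R}$ to strong convexity). So there is no in-paper proof to compare against, and your three-stage outline (GH precompactness, Cheeger--Gromov upgrade via $\epsilon$-regularity, closure of the six defining properties) is broadly the right architecture.

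That said, there is a genuine gap in how you invoke Cheeger--Colding. You repeatedly appeal to Colding's volume convergence and to the Cheeger--Colding metric-cone theorem ``since the approximators are Ricci-flat off a codimension $>3$ set and therefore fall under the Cheeger--Colding framework.'' They do not. The Cheeger--Colding theorems are proved for Gromov--Hausdorff limits of \emph{smooth} manifolds with a uniform Ricci lower bound; a space $X\in\widetilde{\mathscr{KS}}(n,\kappa)$ is defined intrinsically and is \emph{not} assumed to arise as such a limit. Indeed the paper stresses exactly this point: $\widetilde{\mathscr{KS}}(n,\kappa)$ is strictly larger than the closure $\overline{\mathscr{KS}}(n,\kappa)$ of the smooth moduli, and the content of the theorem in~\cite{CW6} is precisely that one can \emph{reprove} the relevant Cheeger--Colding machinery (segment inequality, almost-splitting, volume convergence, metric-cone structure of tangent cones) directly on these singular spaces, using properties 3--5 (weak convexity of $\mathcal{R}$, high Minkowski codimension of $\mathcal{S}$, and the volume-density gap) as substitutes for global smoothness. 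Your proof sketch treats these results as black boxes that already apply, which short-circuits the actual work. Similarly, your ``Vitali/packing transfer'' for the Minkowski bound and your limiting-geodesic argument for weak convexity of $\mathcal{R}_\infty$ both implicitly rely on estimates (uniform density bounds, geodesics generically avoiding $\mathcal{S}$) that must first be established intrinsically on each $X_j$ rather than imported from the smooth theory.
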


Basically, Theorem~\ref{thm:GC21_2} says that $X \in \widetilde{\mathscr{KS}}(n,\kappa)$ has better properties that it seems to be. 
For example, following definitions, one only knows that $\dim_{\mathcal{M}} \mathcal{S}<2n-3$ and $\mathcal{R}$ is weakly convex.
However, by delicate analysis based on all the defining properties, one can obtain that $\dim_{\mathcal{M}} \mathcal{S}<2n-4-\epsilon$ for each $\epsilon>0$, and $\mathcal{R}$ is strongly convex. 
These can be regarded as ``regularity improvement". 
There are many other ``regularity improvement" for spaces in $\widetilde{\mathscr{KS}}(n,\kappa)$. More details can be found in  Chen-Wang~\cite{CW6}.

For the purpose of ``almost" regular-singular decomposition of a manifold very close to the model space, we give the following definition. 

\begin{definition}(c.f. Definition 3.3. of Chen-Wang~\cite{CW6})
Denote the set 
\begin{align*}
\left\{ r \left| 0<r<\rho, \omega_{m}^{-1}r^{-m}|B(x_0,r)|\geq 1-\delta_0 \right. \right\}
\end{align*}
by $I_{x_0}^{(\rho)}$ where $x_0 \in M$, $\rho$ is a positive number.
Clearly, $I_{x_0}^{(\rho)} \neq \emptyset$ since $M$ is smooth. Define
  \begin{align*}
    \mathbf{vr}^{(\rho)}(x_0) \triangleq  \sup I_{x_0}^{(\rho)}.
  \end{align*}
For each pair $0<r \leq \rho$, define
\begin{align*}
  &\mathcal{F}_{r}^{(\rho)}(M) \triangleq \left\{ x \in M | \mathbf{vr}^{(\rho)}(x) \geq r  \right\}, \\
  &\mathcal{D}_{r}^{(\rho)}(M) \triangleq \left\{ x \in M | \mathbf{vr}^{(\rho)}(x) < r  \right\}.
\end{align*}
\label{dfn:SC24_1}
\end{definition}

\begin{definition}(c.f. Definition 3.5. of Chen-Wang~\cite{CW6})
We say that the canonical radius (with respect to model space $\widetilde{\mathscr{KS}}(n,\kappa)$, $m=2n$) of a point $x_0 \in M$ is
not less than $r_0$ if for every  $r < r_0$, we have the following properties.
\begin{enumerate}
  \item Volume ratio estimate: $\kappa \leq \omega_{m}^{-1}r^{-m}|B(x_0,r)| \leq \kappa^{-1}$.
  \item Regularity estimate: $r^{2+k}|\nabla^k Rm|\leq 4 c_a^{-2}$ in the ball $B(x_0, \frac{1}{2}c_a r)$ for every $0 \leq k \leq 5$ whenever
        $\omega_{m}^{-1}r^{-m}|B(x_0,r)| \geq 1-\delta_0$.
  \item Density estimate: $\displaystyle r^{2p_0-m} \int_{B(x_0, r)} \mathbf{vr}^{(r)}(y)^{-2p_0} dy \leq 2\mathbf{E}$.
  \item Connectivity estimate: $B(x_0,r) \cap \mathcal{F}_{\frac{1}{50}c_b r}^{(r)}(M)$ is $\frac{1}{2}\epsilon_b r$-regular-connected on the scale $r$. In other words, every two points 
     $x,y \in B(x_0,r) \cap \mathcal{F}_{\frac{1}{50}c_b r}^{(r)}(M)$ can be connected by a curve $\gamma \subset \mathcal{F}_{\frac{1}{2}\epsilon_b r}^{(r)}$ satisfying $|\gamma|<2d(x,y)$.  
\end{enumerate}
Then we define canonical radius of $x_0$ to be the supreme of all the $r_0$ with the properties mentioned above.
We denote the canonical radius by $\mathbf{cr}(x_0)$.
For subset $\Omega \subset M$, we define  the canonical radius of $\Omega$ as the infimum of all $\mathbf{cr}(x)$ where $x \in \Omega$.
We denote this canonical radius by $\mathbf{cr}(\Omega)$.
\label{dfn:SC02_1}
\end{definition}

\begin{remark}
The constants $c_a, c_b, \epsilon_b, \boldsymbol{E}$ in Definition~\ref{dfn:SC02_1} are all uniform constants depending on $n$ and $\kappa$. 
If $M \in \widetilde{\mathscr{KS}}(n,\kappa)$ and $x_0 \in M$, then all the estimates in Definition~\ref{dfn:SC02_1} are satisfied with better constants.
In particular, we have $\mathbf{cr}(x_0)=\infty$. 
\label{rmk:MD24_1}
\end{remark}

\begin{remark}
In Definition~\ref{dfn:SC02_1}, $\mathbf{vr}^{(r)}$ can be replaced by other regularity scales(c.f. the comments before Proposition 5.11 of Chen-Wang~\cite{CW6}), including harmonic radius, reduced volume radius, curvature radius, etc.
The number $p_0$ is  a number very close to $2$.  We set $p_0$ to be $2-\frac{1}{1000n}$ as in Chen-Wang~\cite{CW6}. 
\label{rmk:MD13_2}  
\end{remark}

\begin{definition}
   We call the space-time canonical radius ($\mathbf{scr}$) of point $(x_0, t_0)$ is greater than $r$ whenever $\mathbf{cr}(M, g(t_0)) \geq r$ and the following two-sided pseudo-locality is satisfied:
   
   If $\omega_{m}^{-1}r^{-m}|B(x_0,r)| \geq 1-\delta_0$, then 
   \begin{align}
     |Rm|(x,t) \leq 4c_a^{-2}r^{-2},    \label{eqn:MB04_0}
   \end{align}
   for every $(x,t) \in B(x_0, \frac{1}{2}c_a r) \times [t_0-\frac{1}{4}c_a^2 r^2, t_0+\frac{1}{4}c_a^2 r^2]$. 
   
   We say $\mathbf{scr}(M, t_0) \geq r$ if $\mathbf{scr}(x, t_0) \geq r$ for every point $x \in M$. 
\label{dfn:MA13_1}
\end{definition}

\begin{remark}
 The concept $\mathbf{scr}$ is not used in Chen-Wang~\cite{CW6}, where we have $\mathbf{pcr}$ instead. 
 Both of them are auxiliary concepts and can be dropped at the end(c.f. Remark~\ref{rmk:MD28_3}).
 In Chen-Wang~\cite{CW6}, we study flow compactness and line bundle compactness, $\mathbf{pcr}$ is needed for
 the Chen-Lu inequality to show the preserving of regularity along time direction without $|R| \to 0$.
 The lower bound of $\mathbf{pcr}$ implies a lower bound of $\mathbf{scr}$.   In other words, lower bound of $\mathbf{pcr}$ implies
 two-sided pseudo-locality theorem.  This is not an obvious result and is proved in Theorem 1.4 of  Chen-Wang~\cite{CW6}. 
 
 %
\label{rmk:MD28_1}  
\end{remark}

\begin{definition}
We say a Riemannian manifold $(M^m, g)$ is $\kappa$-noncollapsed on scale $r_0$ if for every $x \in M$ and $r \in (0,r_0)$, we have $r^{-m}|B(x,r)|>\kappa$. 
We say a Ricci flow $\mathcal{M}=\{(M^m, g(t)), t \in I\}$ is $\kappa$-noncollapsed on scale $r_0$ if each time slice $(M^m, g(t))$ is a $\kappa$-noncollapsed Riemannian manifold on scale $r_0$.
\label{dfn:MD16_1}
\end{definition}

\begin{remark}
Note that our $\kappa$-noncollapsed condition is different from the one of Perelman(c.f.~\cite{Pe1}).  In this paper, we only study Ricci flows which is $\kappa$-noncollapsed on scale $1$. 
\label{rmk:MB24_1}
\end{remark}

\section{A principle of metric distortion estimate}
This is nothing but a generalization of Section 5.3 of~\cite{CW6}, with the Bergman function there replaced by a general function $u$ whose gradient and time derivatives are bounded. 
The basic idea is to setup a uniform estimate such that the level sets of $u$ can be compared with geodesic balls of fixed sizes(c.f. Figure~\ref{fig:levelsets}).  In other words, for each fixed $a$, we have positive $r,\rho$ such that
\begin{align*}
  B_{g(t)}(x, r) \subset    \Omega_t(x, a)=\{y | u(y, t) \geq a\} \subset B_{g(t)}(x, \rho).
\end{align*}
The key is the $r,\rho$ in the above inequality does not depend on $x,t$. 
Since the time derivative of $u$ is uniformly bounded,  we can transform the comparison of geodesic balls at different time slices to the comparison of level sets of $u$
at different time slices. However, the time derivative of $u$ is uniformly bounded. Therefore, the level sets at different time slices can be compared. 
Using this principle, we have the following metric distortion estimate.

\begin{figure}
 \begin{center}
 \psfrag{A}[c][c]{$\color{blue} B(x,r)$}
 \psfrag{B}[c][c]{$\color{red} \Omega(x, a)$}
 \psfrag{C}[c][c]{$B(x, \rho)$}
 \psfrag{D}[c][c]{$x$}
 \includegraphics[width=0.5 \columnwidth]{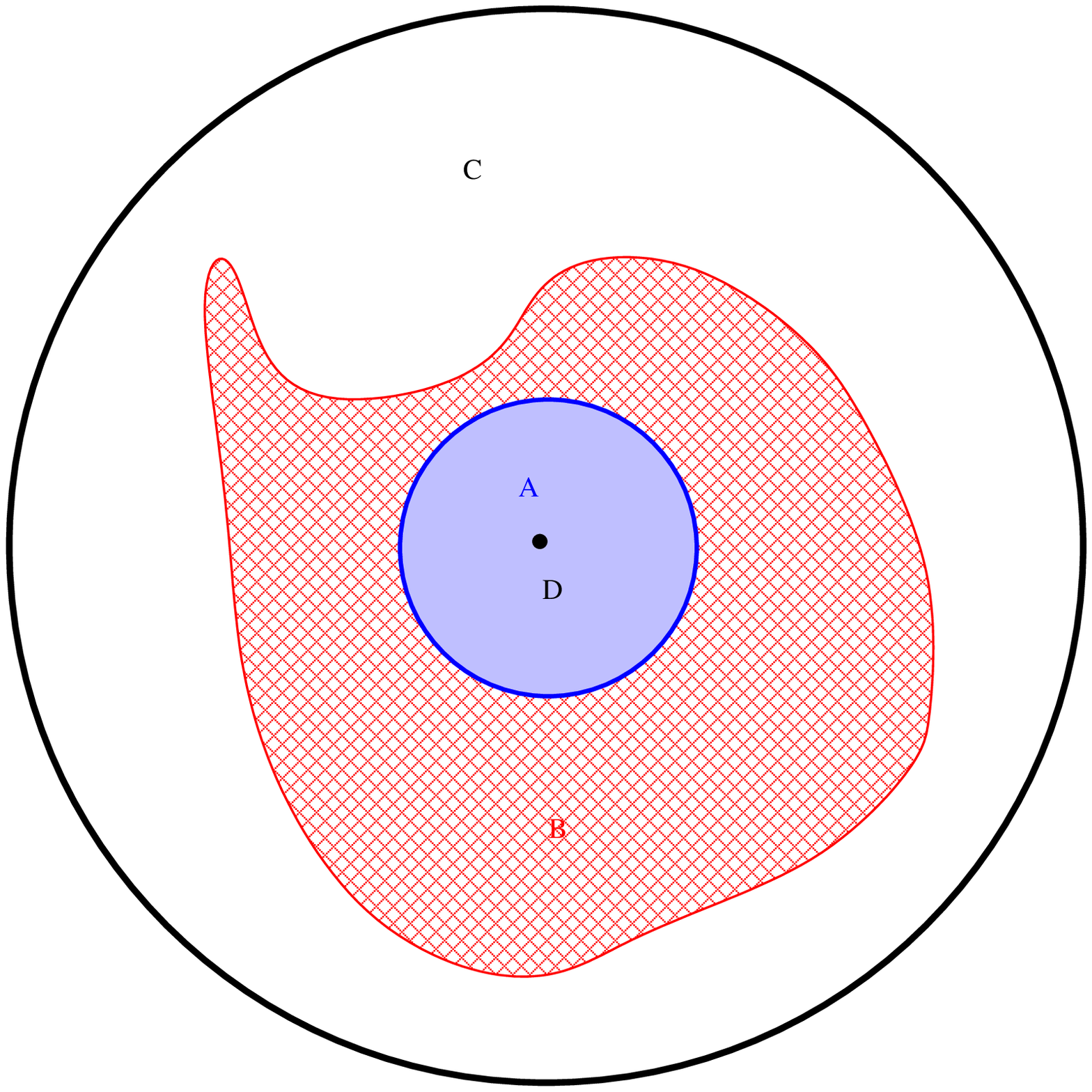}
 \caption{Compare level sets of a Lipshitz function with geodesic balls}
 \label{fig:levelsets}
 \end{center}
 \end{figure}

\begin{theorem}[\textbf{Rough metric distortion estimate}]
Let $\mathcal{M}=\{(M, g(t)), -1 \leq t \leq 1\}$ be a smooth space-time, i.e., smooth family of metrics $g(t)$ on a complete manifold $M^m$. 
Suppose $u$ is a positive function defined on $\mathcal{M}$ such that 
  \begin{align}
  \begin{cases}
     & |\dot{u}| + |\nabla u|<A, \\
     & \frac{1}{A} \leq \left. \int_M u d\mu \right|_{t} \leq A, \quad \forall \; t \in [-1, 1], \\
     & \frac{1}{A} \leq \frac{|B_{g(t)}(x, r)|}{r^m},  \quad \forall \; x \in M, \; t \in [-1, 1], r \in (0, 1).   
  \end{cases}    
  \label{eqn:MA16_4}
  \end{align}
  If $u(x_0, 0)>\frac{1}{A}$, then we have a ball containment
  \begin{align}
     B_{g(0)} \left(x_0, 0.2A^{-2} \right) \subset B_{g(t)} \left( x_0, (10A)^{2m+1} \right), \quad \forall \; t \in [-0.2A^{-2}, 0.2A^{-2}]. 
  \label{eqn:MA16_5}   
  \end{align}  
\label{thm:MA16_1} 
\end{theorem}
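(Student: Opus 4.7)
The plan is to follow the level-set comparison sketched in the discussion preceding the theorem: propagate the hypothesis $u(x_0,0) > 1/A$ first in space and then in time to obtain a super-level set of $u$ at time $t$ that contains $B_{g(0)}(x_0, 0.2 A^{-2})$, and then bound the intrinsic $g(t)$-diameter of the connected component of that super-level set by a ball-packing argument driven by the $L^1$-bound on $u$ and the non-collapsing hypothesis.

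First, the spatial gradient bound $|\nabla u| < A$ gives $u(y,0) \geq 0.8/A$ for every $y \in B_{g(0)}(x_0, 0.2 A^{-2})$, and the time-derivative bound $|\dot u| < A$ then propagates this to $u(y,t) \geq 0.6/A$ for all $t \in [-0.2 A^{-2}, 0.2 A^{-2}]$. Fix such a $t$ and set
\[
\Omega := \{z \in M : u(z,t) \geq 0.6/A\}, \qquad \Omega^{\ast} := \{z \in M : u(z,t) \geq 0.3/A\}.
\]
Then $B_{g(0)}(x_0, 0.2 A^{-2}) \subset \Omega \subset \Omega^{\ast}$. Since connectedness is a topological notion and hence independent of which metric $g(s)$ one uses, the path-connected $g(0)$-ball lies in a single connected component $C$ of $\Omega$, and it suffices to prove $\diam_{g(t)}(C) \leq (10A)^{2m+1}$.

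To bound this diameter I would combine three inputs. Chebyshev applied to the $L^1$-bound on $u$ gives $|\Omega^{\ast}|_{g(t)} \leq A/(0.3/A) < 4 A^2$. The spatial gradient bound yields $B_{g(t)}(y, 0.3 A^{-2}) \subset \Omega^{\ast}$ for every $y \in \Omega$. Picking a maximal $(0.3 A^{-2})$-separated net $\{y_1,\dots,y_N\} \subset C$, the disjoint balls $B_{g(t)}(y_i, 0.15 A^{-2})$ lie in $\Omega^{\ast}$, and the non-collapsing assumption in \eqref{eqn:MA16_4} gives each of them $g(t)$-volume at least $0.15^{m} A^{-(2m+1)}$ (valid because $0.15 A^{-2} < 1$ for $A \geq 1$). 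Packing in $\Omega^{\ast}$ therefore forces $N \leq 4 A^{2m+3}/0.15^m$. Since $\{B_{g(t)}(y_i, 0.3 A^{-2})\}$ covers $C$ by maximality and $C$ is connected, a standard chain argument on overlapping balls produces
\[
\diam_{g(t)}(C) \;\leq\; 2(N+1)\cdot 0.3 A^{-2} \;\leq\; K_m \, A^{2m+1},
\]
and a direct numerical check gives $K_m \leq 10^{2m+1}$, which finishes the containment.

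The main obstacle is largely bookkeeping: one must ensure that the compound exponent arising from the packing bound on $N$ together with the separation scale $\sim A^{-2}$ conspire to produce exactly $A^{2m+1}$ rather than something larger, and that the absolute numerical constant fits within the slack provided by $10^{2m+1}$. There is no deep input beyond the three hypotheses in \eqref{eqn:MA16_4}; once one recognizes that an intermediate super-level set of $u$ plays the role of a comparison object between $g(0)$- and $g(t)$-balls, the rest is elementary gradient, volume, and packing estimates.
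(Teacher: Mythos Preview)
Your proposal is correct and follows essentially the same strategy as the paper: propagate the lower bound $u(x_0,0)>A^{-1}$ through space via $|\nabla u|<A$ and through time via $|\dot u|<A$, then run a ball-packing argument using the $L^1$-bound on $u$ together with the non-collapsing hypothesis to cap the $g(t)$-distance. The only cosmetic difference is that the paper packs $g(t)$-balls along the specific $g(0)$-geodesic from $x_0$ to $y$ (which lies in the same super-level set of $u(\cdot,t)$), whereas you pack in the entire connected component of the super-level set; both yield the same $A^{2m+1}$ exponent and fit under $(10A)^{2m+1}$.
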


\begin{proof}
  Because of the gradient estimate, we see that $u(\cdot, 0)>0.5 A^{-1}$ in the ball $B_{g(0)}(x_0, 0.5 A^{-2})$. 
  Applying the time derivative estimate, we then obtain $u>0.2 A^{-1}$ for all $x \in B_{g(0)}(x_0, 0.2 A^{-2})$ and $t \in [-0.2A^{-2}, 0.2A^{-2}]$. 
  Take a point $y \in \partial B_{g(0)}(x_0, 0.2A^{-2})$. 
  Let $t>0$ be a time such that $y$ can be connected to $x_0$ by a curve $\gamma$ where $u>0.2A^{-1}$. We claim that $x_0$ and $y$ cannot be too far away.
  Actually, $\cup_{z \in \gamma} B(z, 0.1A^{-2})$ is a covering of $\gamma$, which is compact.  
  We can take a finite cover  $\displaystyle \cup_{i=1}^N B(z_i, 0.1A^{-2})$ such that $B(z_i, 0.01A^{-2})$ are disjoint, with $z_1=x_0$ and $z_N=y$. 
  Since $z_i \in \gamma$, we have $u(z_i, t)>0.2A^{-1}$, which in turn implies that $u(\cdot, t)>0.1 A^{-1}$ in the ball $B(z_i, 0.01A^{-2})$.  Then we have
  \begin{align*}
     A \geq \int_M u d\mu \geq  \int_{\cup_{i=1}^N B(z_i, 0.01A^{-2})} u d\mu  \geq 0.1 A^{-1} \sum_{i=1}^{N} |B(z_i, 0.01A^{-2})| \geq 0.1 A^{-1} \cdot  A^{-1} \cdot \left(0.01 A^{-2} \right)^m N,
  \end{align*}
  where we used (\ref{eqn:MA16_4}) in the last step.
  It follows that $\displaystyle N \leq 10^{2m+1} A^{2m+3}$.  Now using the fact that $\cup_{i=1}^N B(z_i, 0.1A^{-2})$ is a covering of $\gamma$, we obtain
  \begin{align*}
    d(x_0, y) \leq N \cdot 0.1A^{-2} \leq 10^{2m} A^{2m+1}<(10A)^{2m+1}. 
  \end{align*}
  Let $\gamma$ be the shortest geodesic connecting $x_0$ and $y$ at time $t=0$. Then for each $t \in [-0.2A^{-2}, 0.2A^{-2}]$, we have $u>0.2A^{-1}$. 
  By the previous argument, we know $d_{g(t)}(x_0, y) \leq 10^{2m}A^{2m+1}$, which implies  (\ref{eqn:MA16_5}).  
\end{proof}

\begin{remark}
Note that Theorem 3.1 plays an important role in the paper of Chen-Wang~\cite{CW6}, where $u$ is the peak section function or the exponential of the Bergman function.  
More details can be found in section 5.3(Lemma 5.20 and Lemma 5.21 in particular) of Chen-Wang~\cite{CW6}. 
On the tangent cone(local structure) level, the expression of peak section function is $e^{-|z|^2}$, which is almost the same as the heat kernel function $e^{-\frac{|z|^2}{|t|}}$,  where $|z|$ is the distance to the vertex of the cone.
In retrospect, it is not surprising that the peak section function in Chen-Wang~\cite{CW6} can be replaced by heat kernel function to extend our distance distorsion estimate to the Riemannian setting,
as done in  the later work of Bamler-Zhang~\cite{BZ}.   Such distance distorsion estimate is the basis of Bamler-Zhang~\cite{BZ} and is essentially used in Bamler~\cite{Bamler}.
\label{rmk:MD09_1}
\end{remark}

\section{Structure of Ricci flows with space-time canonical radius($\mathbf{scr}$) bounded below}
\label{sec:scrbd}
The argument in this section is purely Riemannian. 
In this section, we focus on the study of Ricci flows
\begin{align}
     \frac{\partial}{\partial t} g=-2Ric 
\label{eqn:GC21_7}     
\end{align}
on closed manifolds
$\mathcal{M}=\{(M^m, g(t)), -T \leq t \leq T\}$ satisfying
\begin{align}
\begin{cases}
  &\mathbf{scr}(M, g(t)) \geq 1, \quad \forall t \in [-T+1, T-1];\\
  &|R|(x, t) + \frac{2}{T} \leq 1, \quad \forall (x, t) \in M \times [-T+1, T-1]. 
\end{cases}  
\label{eqn:MA16_2}
\end{align}
The second inequality in the above system implies that $T \geq 2$ and $|R| \leq 1$ on the whole space-time $M \times [-T, T]$. 
Under the condition (\ref{eqn:MA16_2}),  we denote $\mathbf{vr}^{(1)}$(c.f. Definition~\ref{dfn:SC24_1}) by $\mathbf{vr}$ for simplicity of notations. 
Furthermore, for each $0<r<1$, we define
\begin{align}
\mathcal{F}_{r}(M, t) \triangleq \{x \in M|  \mathbf{vr}(x,t) \geq r\},  \qquad
\mathcal{D}_{r}(M, t) \triangleq \{x \in M|  \mathbf{vr}(x,t) < r\}. 
\label{eqn:SL27_1}
\end{align}
\textbf{Note that (\ref{eqn:GC21_7}), (\ref{eqn:MA16_2}) and (\ref{eqn:SL27_1}) are the common conditions or conventions for all the discussion in Section~\ref{sec:scrbd}.}

Note that the estimate (\ref{eqn:MA16_2}) already implies a rough weak-compactness property. 

\begin{proposition}[\textbf{Rough weak compactness of time slices}]
 Suppose $\mathcal{M}_i=\{(M_i^m, g_i(t)), -T_i \leq t \leq T_i\}$  is a sequence of Ricci flows satisfying (\ref{eqn:GC21_7}) and (\ref{eqn:MA16_2}),
 $x_i \in M_i$.
 Then by taking subsequence if necessary, we have
 \begin{align}
   (M_i,x_i, g_i(0)) \longright{\hat{C}^{\infty}}  (\bar{M},\bar{x}, \bar{g}),      \label{eqn:MD16_4}
 \end{align}
 where $\bar{M}$ is a length space with regular-singular decomposition $\bar{M}=\mathcal{R} \cup \mathcal{S}$. 
 The regular part $\mathcal{R}$ is an open, smooth Riemannian manifold.
           Furthermore, for every two points $x,y \in \mathcal{R}$, there exists a curve $\gamma$ connecting $x,y$ satisfying
           \begin{align}
           \gamma \subset \mathcal{R}, \quad  |\gamma| \leq 3d(x,y).       \label{eqn:HE11_1}
           \end{align}
 The singular part $\mathcal{S}$ satisfies the Minkowski dimension estimate
 \begin{align}
   \dim_{\mathcal{M}} \mathcal{S} \leq m-2p_0,
 \label{eqn:SB13_4}
 \end{align}         
where $p_0=2-\frac{1}{500m}$. 
\label{prn:MD16_1}
\end{proposition}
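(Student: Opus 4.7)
The plan is to derive the three conclusions of Proposition~\ref{prn:MD16_1} by unpacking the four defining properties of canonical radius in Definition~\ref{dfn:SC02_1} one at a time, so that the proposition reduces to standard Cheeger-Gromov-style limit arguments once the right quantitative estimates are in place.

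I would first extract the length space $\bar{M}$ and the smooth structure on $\mathcal{R}$. The two-sided volume ratio bound of property 1, applied at all scales $r \leq 1$, is a uniform Ahlfors $m$-regularity condition, giving both uniform doubling and a lower volume bound. Gromov's precompactness theorem then yields, after passing to a subsequence, a pointed Gromov-Hausdorff limit $(M_i, x_i, g_i(0)) \to (\bar{M}, \bar{x}, \bar{g})$. I would define
\begin{align*}
  \mathcal{R} \triangleq \bigl\{ \bar{y} \in \bar{M} : \bar{y} = \lim_i y_i \text{ for some } y_i \in M_i \text{ with } \liminf_i \mathbf{vr}(y_i,0) > 0 \bigr\}, \qquad \mathcal{S} \triangleq \bar{M} \setminus \mathcal{R}.
\end{align*}
On every compact $K \subset \mathcal{R}$ the approximants satisfy $\mathbf{vr}(y_i,0) \geq r_0$ for some uniform $r_0(K) > 0$, so property 2 gives a uniform bound on $|Rm|$ and on $|\nabla^k Rm|$ for $k \leq 5$ in a fixed ball about each $y_i$; Shi's Ricci flow estimates together with the two-sided pseudo-locality built into $\mathbf{scr} \geq 1$ then bootstrap this to all orders in space-time. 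The standard harmonic-coordinate convergence lemma of Anderson upgrades Gromov-Hausdorff to $C^\infty$-Cheeger-Gromov convergence on $\mathcal{R}$, equipping $\mathcal{R}$ with a smooth Riemannian structure and yielding the $\hat{C}^\infty$ convergence in (\ref{eqn:MD16_4}).

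For the Minkowski dimension bound (\ref{eqn:SB13_4}), I would apply the density estimate of property 3 at a fixed scale, say $r = 1$. Let $\{q_\alpha\}$ be a maximal $s$-separated net in $\mathcal{D}_s(M_i,0) \cap B(x_0, 1/2)$; a Lipschitz-type comparison for $\mathbf{vr}^{(1)}$ gives $\mathbf{vr}^{(1)} \lesssim s$ on each $B(q_\alpha, s/2)$, so
\begin{align*}
  2\mathbf{E} \;\geq\; \int_{B(x_0, 1)} \mathbf{vr}^{(1)}(y)^{-2p_0}\, dy \;\gtrsim\; \sum_\alpha s^{-2p_0} \, |B(q_\alpha, s/2)| \;\gtrsim\; N(s)\, s^{m - 2p_0},
\end{align*}
which forces the covering number $N(s) \lesssim s^{-(m - 2p_0)}$. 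This bound passes to the limit because $\mathcal{S}$ is exhausted by Gromov-Hausdorff limits of $\mathcal{D}_s$ as $s \to 0$, and yields $\dim_{\mathcal{M}} \mathcal{S} \leq m - 2p_0$.

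The delicate step will be the connectivity estimate (\ref{eqn:HE11_1}), which I expect to be the main obstacle. For $x, y \in \mathcal{R}$ with approximants $x_i, y_i \in \mathcal{F}_{r_0}(M_i, 0)$, I would pick a scale $r$ comparable to $d(x, y) + r_0$ and invoke property 4 to obtain curves $\gamma_i \subset \mathcal{F}^{(r)}_{\frac{1}{2}\epsilon_b r}(M_i)$ of length less than $2\, d_i(x_i, y_i)$. The difficulty is that a direct Arzel\`a-Ascoli limit of the $\gamma_i$ only lies in the closure of a fixed-scale $\mathcal{F}$-set, which may intersect $\mathcal{S}$. The resolution, following the scheme of Chen-Wang~\cite{CW6}, is a scale-by-scale refinement: on any subsegment of $\gamma_i$ passing through points with $\mathbf{vr}$ below a smaller scale $r' \ll r$, re-apply property 4 at scale $r'$ to reroute through $\mathcal{F}_{\frac{1}{2}\epsilon_b r'}^{(r')}(M_i)$, summing the additional lengths into a geometric series of total excess at most $d(x,y)$. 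The limit of the refined curves then lies entirely in $\mathcal{R}$ with total length at most $3\, d(x,y)$, which is where the slack from $2$ to $3$ in (\ref{eqn:HE11_1}) is spent.
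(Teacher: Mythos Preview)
Your proposal is correct and follows the same route as the paper: the paper's own proof is simply a citation to Theorem~3.18 of Chen--Wang~\cite{CW6}, noting only that the backward pseudo-locality built into $\mathbf{scr} \geq 1$ is what makes the limit regular part $\mathcal{R}$ a genuine smooth manifold. Your argument is a faithful unpacking of that cited theorem, using the four canonical-radius properties of Definition~\ref{dfn:SC02_1} in turn for Gromov precompactness, Anderson-type $C^\infty$ convergence on $\mathcal{R}$, the Minkowski bound on $\mathcal{S}$ via the density integral, and the $3$-connectivity of $\mathcal{R}$ via iterated rerouting at shrinking scales.
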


\begin{proof}
 This follows exactly from Theorem 3.18 of Chen-Wang~\cite{CW6}. Note that we have backward pseudo-locality since $\mathbf{scr}_{g_i}(M_i, 0) \geq 1$.  
 Therefore, the limit regular part $\mathcal{R}$ is a smooth manifold. 
\end{proof}

In Proposition~\ref{prn:MD16_1}, the limit space $\bar{M}$ in general does not belong to the model space $\widetilde{\mathscr{KS}}(n,\kappa)$. 
However, it does satisfy the first two defining properties of $\widetilde{\mathscr{KS}}(n,\kappa)$ in Definition~\ref{dfn:GC21_1}, except the K\"ahler condition. 
In order to show $\bar{M} \in \widetilde{\mathscr{KS}}(n,\kappa)$, one need further input to guarantee the property 3 to property 6  are all true. 
Actually, all the extra conditions needed are the following equations
\begin{align}
  \lim_{i \to \infty} \left( \frac{1}{T_i} + \frac{1}{\Vol(M_i, g_i(0))} + \sup_{M_i \times [-T_i, T_i]} |R|  \right)=0.  \label{eqn:SL06_1}
\end{align}
In the remainder part of this section, we shall show that $\bar{M}$ satisfies property 1 to property 6 of Definition~\ref{dfn:GC21_1}, except the K\"ahler condition.

\begin{lemma}[\textbf{Time derivative estimate}]
  Let $u$ be a heat solution $\square u=0$ on $M \times [s_0,T-1]$, where $s_0 \in [-T+1, T-1]$.
  Then we have
  \begin{align}
  \begin{cases}
    & |\nabla u|^2 \leq \max_{t=s_0} |\nabla u|^2, \\ 
   &|\dot{u}|=|\Delta u| \leq \max_{t=s_0} |\Delta u| + \max_{t=s_0} |\nabla u|^2 + \frac{R+1}{4}, 
  \end{cases} 
  \label{eqn:MA16_3}
  \end{align}
  for each $t \in [s_0, T-1]$. 
  \label{lma:MA13_1}
\end{lemma}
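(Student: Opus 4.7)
For the first estimate, the standard Bochner formula combined with the Ricci flow equation $\partial_t g = -2Ric$ yields the familiar identity
\begin{align*}
  \square |\nabla u|^2 = -2|\nabla^2 u|^2 \leq 0,
\end{align*}
where $\square = \partial_t - \Delta$. Since $M$ is closed, the parabolic maximum principle immediately gives $|\nabla u|^2(\cdot, t) \leq \max_{t=s_0} |\nabla u|^2$ for every $t \in [s_0, T-1]$.

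The second estimate is the substantive part. The plan is to construct a subsolution of $\square$ that absorbs the bad term $\square \dot u = 2 R^{ij} u_{ij}$ coming from commuting $\Delta$ with $\partial_t$ under Ricci flow. I would work with the auxiliary function
\begin{align*}
  \phi_{\pm} := \pm \dot u + |\nabla u|^2 - \tfrac{1}{4} R.
\end{align*}
Assembling the three evolution identities $\square \dot u = 2 R^{ij} u_{ij}$, $\square |\nabla u|^2 = -2|\nabla^2 u|^2$, and Hamilton's $\square R = 2|Ric|^2$, together with the Cauchy--Schwarz bound
\begin{align*}
  \pm 2 R^{ij} u_{ij} \leq 2 |Ric|\,|\nabla^2 u| \leq 2|\nabla^2 u|^2 + \tfrac{1}{2}|Ric|^2,
\end{align*}
one checks that $\square \phi_{\pm} \leq 0$ on $M \times [s_0, T-1]$.

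The parabolic maximum principle on the closed manifold $M$ then gives $\max_M \phi_{\pm}(t) \leq \max_M \phi_{\pm}(s_0)$ for all $t \in [s_0, T-1]$. Using the standing hypothesis $|R| \leq 1$ from (\ref{eqn:MA16_2}) to bound $-R/4 \leq 1/4$ at $t=s_0$, and dropping the non-negative $|\nabla u|^2$ on the left-hand side at the later time $t$, one extracts
\begin{align*}
  \pm \dot u(x,t) \leq \max_{t=s_0}|\Delta u| + \max_{t=s_0}|\nabla u|^2 + \tfrac{R(x,t)+1}{4},
\end{align*}
which is the desired inequality. The only delicate point is the choice of coefficients $1$ and $\tfrac{1}{4}$ in $\phi_{\pm}$: they are precisely what is needed for a single Cauchy--Schwarz split to simultaneously neutralize both the Hessian term $-2|\nabla^2 u|^2$ coming from $\square|\nabla u|^2$ and the $-\tfrac{1}{2}|Ric|^2$ term coming from $-\tfrac{1}{4}\square R$. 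Notably, no bound on full Ricci or Riemann curvature is used; only the scalar curvature hypothesis $|R| \leq 1$ and the closedness of $M$ enter.
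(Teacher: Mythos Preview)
Your proof is correct and follows essentially the same route as the paper: the same three evolution identities $\square \Delta u = 2R^{ij}u_{ij}$, $\square|\nabla u|^2 = -2|\nabla^2 u|^2$, $\square R = 2|Ric|^2$ are combined into the same auxiliary quantity $\pm\Delta u + |\nabla u|^2 - \tfrac{R}{4}$, and the maximum principle is applied identically. The only cosmetic difference is that you obtain $\square\phi_{\pm}\le 0$ via Cauchy--Schwarz plus Young's inequality, whereas the paper writes the same computation as the exact identity $\square\phi_{\pm} = -2\bigl|u_{ij}\mp \tfrac12 R_{ij}\bigr|^2$; these are equivalent, and in fact your sign on the $|\nabla u|^2$ term is the correct one (the paper's displayed formula has a sign typo there, though its conclusion is unaffected).
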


\begin{proof}
Without loss of generality, we assume $s_0=0$. 
Direct calculation shows that
\begin{align*}
  \square \Delta u=2R_{ij}u_{ij},  \quad \square |\nabla u|^2=-2|u_{ij}|^2,  \quad \square R=2|R_{ij}|^2. 
\end{align*}
Combining them in an proper way implies that
\begin{align*}
  \square \left(\Delta u -\frac{R}{4} -|\nabla u|^2 \right)=-2 \left|u_{ij}-\frac{R_{ij}}{2} \right|^2, \quad
  \square \left(-\Delta u -\frac{R}{4} -|\nabla u|^2 \right)=-2 \left|u_{ij}+\frac{R_{ij}}{2} \right|^2.
\end{align*}
Therefore, the functions $-R, |\nabla u|^2, \Delta u-\frac{R}{4}-|\nabla u|^2, -\Delta u -\frac{R}{4} -|\nabla u|^2$ are all sub-solutions of heat equation.
The first line of (\ref{eqn:MA16_3}) follows from the fact that $\square |\nabla u|^2 \leq 0$ directly by maximum principle. 
To prove the second line, we note that maximum principle implies
\begin{align*}
   \Delta u &\leq \max_{t=0} \left( \Delta u -\frac{R}{4} -|\nabla u|^2 \right) +\frac{R}{4} +|\nabla u|^2
    \leq \max_{t=0} \left( \Delta u -|\nabla u|^2 \right) - \min_{t=0} \frac{R}{4}  + \frac{R}{4} +|\nabla u|^2 \\
  &\leq \max_{t=0} \left( \Delta u -|\nabla u|^2 \right) + \max_{t=0} |\nabla u|^2 + \frac{R+1}{4}
    \leq \max_{t=0} \Delta u + \max_{t=0} |\nabla u|^2 +\frac{R+1}{4}.
\end{align*}
Recall that we used the condition $R \geq -1$ by (\ref{eqn:MA16_2}). 
Similarly, we have
\begin{align*}
  -\Delta u &\leq \max_{t=0} \left( -\Delta u -\frac{R}{4} -|\nabla u|^2 \right) + \frac{R}{4} +|\nabla u|^2 \leq \max_{t=0} \{-\Delta u-|\nabla u|^2\} + \max_{t=0} |\nabla u|^2+\frac{R+1}{4}\\
  &\leq \max_{t=0} \left( -\Delta u \right) + \max_{t=0} |\nabla u|^2 + \frac{R+1}{4}.
\end{align*}
Then the second line of (\ref{eqn:MA16_3}) follows from the combination of the  previous two estimates.   
\end{proof}

The following gradient estimate, due to Q.S. Zhang (c.f.~\cite{Zhq2}) and Cao-Hamilton (c.f.~\cite{CaHa}) will be repeatedly used. For the convenience of readers, we write down the precise statement.
We follow the style of Q.S. Zhang. 

\begin{lemma}[\textbf{Gradient estimate and Harnack inequality of heat solution}]
Let $u$ be a positive heat solution $\square u=0$ on $M \times [t_0,T]$, where $t_0 \in [-T+1, T]$.
If $u \leq A$ on $M \times [t_0, T]$, then we have
\begin{align}
    \frac{|\nabla u|}{u}  \leq \frac{1}{\sqrt{t-t_0}} \sqrt{\log \frac{A}{u}}.
\label{eqn:MA16_6}    
\end{align}
For each $t \in (t_0, T]$ and two points $x,y \in M$, we have
\begin{align}
 u(x,t)e^{-\left( \frac{2d}{\sqrt{t-t_0}} + \sqrt{\log \frac{A}{u(x,t)}}\right)^2} < u(y, t) < u(x,t)e^{\left( \frac{2d}{\sqrt{t-t_0}} + \sqrt{\log \frac{A}{u(x,t)}}\right)^2} 
\label{eqn:MA16_7} 
\end{align}
where $d=d_{g(t)}(x,y)$. 
\label{lma:MA16_1}
\end{lemma}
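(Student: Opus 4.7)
The plan is to reduce the gradient estimate \eqref{eqn:MA16_6} to a pointwise bound $t|\nabla f|^2 \leq f$ where $f := \log(A/u) \geq 0$, establish this by the parabolic maximum principle applied to a suitable barrier on the closed manifold $M$, and then derive the Harnack inequality \eqref{eqn:MA16_7} by integrating the consequence $|\nabla \sqrt f| \leq 1/(2\sqrt{t-t_0})$ along a minimizing geodesic of $g(t)$. Without loss of generality I normalize $t_0=0$.

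The first step is to record the two evolution identities that drive the barrier argument. From $\square u=0$ a direct computation gives $\square f = -|\nabla f|^2$. Applying the Bochner formula together with the Ricci flow identity $\partial_t g^{ij}=2R^{ij}$, the Ricci curvature contributions from Bochner and from the variation of the metric cancel exactly under the flow, leaving
\begin{align*}
  \square |\nabla f|^2 \;=\; 2\langle \nabla \square f,\nabla f\rangle - 2|\nabla^2 f|^2 \;=\; -2\langle \nabla |\nabla f|^2,\nabla f\rangle - 2|\nabla^2 f|^2.
\end{align*}
This Ricci-cancellation is what allows the otherwise standard Li--Yau-type argument to go through unchanged under the Ricci flow.

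Next I would apply the parabolic maximum principle to $F_\varepsilon := t|\nabla f|^2 - f - \varepsilon t$, for each fixed $\varepsilon>0$. At $t=0$ the barrier is nonpositive, while at any hypothetical positive interior maximum $(x^*,t^*)$ one has $\nabla F_\varepsilon = 0$ and $\square F_\varepsilon \geq 0$. The critical equation rearranges to $t^* \nabla|\nabla f|^2 = \nabla f$, hence $2t^*\langle \nabla|\nabla f|^2,\nabla f\rangle = 2|\nabla f|^2$; substituting the two evolution identities collapses the computation to
\begin{align*}
  \square F_\varepsilon \;=\; 2|\nabla f|^2 + t\,\square|\nabla f|^2 - \square f - \varepsilon \;=\; -2t^*|\nabla^2 f|^2 - \varepsilon \;<\; 0,
\end{align*}
the desired contradiction. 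Thus $F_\varepsilon \leq 0$ on all of $M\times[0,T]$, and sending $\varepsilon\to 0$ yields $t|\nabla f|^2 \leq f$, which rewritten in terms of $u$ is precisely \eqref{eqn:MA16_6}.

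For \eqref{eqn:MA16_7} I would use the immediate consequence $|\nabla \sqrt f| \leq 1/(2\sqrt t)$ and integrate along a minimizing geodesic at time $t$, obtaining $|\sqrt{f(y)} - \sqrt{f(x)}| \leq d/(2\sqrt t) \leq 2d/\sqrt t$. Squaring, exponentiating, and using $A/u(x)=e^{f(x)}$ to pivot from $A$ to $u(x)$ in the bounds, one reads off both directions of \eqref{eqn:MA16_7}; the generous exponent $(2d/\sqrt t+\sqrt{f(x)})^2$ comfortably absorbs the cross term and the additional $f(x)$ produced in the expansion, so the stated inequalities are in fact weaker than what the integration delivers. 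The main obstacle is the first step: without the exact cancellation of the Ricci contributions in $\square|\nabla f|^2$, the barrier computation would inherit an uncontrolled curvature correction and the clean maximum principle argument above would break down.
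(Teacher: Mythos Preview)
Your argument is correct and is precisely the standard proof due to Q.S.~Zhang~\cite{Zhq2}: the substitution $f=\log(A/u)$, the evolution identities $\square f=-|\nabla f|^2$ and $\square|\nabla f|^2=-2\langle\nabla|\nabla f|^2,\nabla f\rangle-2|\nabla^2 f|^2$ (with the Ricci terms cancelling under the flow), the barrier $t|\nabla f|^2-f-\varepsilon t$, and the integration of $|\nabla\sqrt f|\le 1/(2\sqrt t)$ along a $g(t)$-geodesic. The paper itself does not supply a proof; it simply records the statement and refers to Q.S.~Zhang~\cite{Zhq2} and Cao--Hamilton~\cite{CaHa}, so there is nothing to compare against beyond noting that you have reproduced the cited argument.
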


Using the fact $\mathbf{scr} \geq 1$, we can construct auxiliary heat solutions for the purpose of applying metric distorsion estimate Theorem~\ref{thm:MA16_1}. 
Similar choice of heat solution was already used in the proof of Lemma 4.2 of Chen-Wang~\cite{CW6}. 

\begin{lemma}[\textbf{Construction of heat solution}]
Suppose $\mathbf{vr}(x_0, t_0)=r$. Then there is a heat solution on $M \times [t_0-K^{-2}r^2, T]$ such that
\begin{align}
 &\int_M u(\cdot, t) d\mu<10,  \label{eqn:MA17_2} \\
 &\frac{K^mr^{-m}}{C}<u(x_0, t) < CK^mr^{-m},      \label{eqn:MA17_3}\\
 &|\Delta u| +|\nabla u|^2 < CK^{m+2} r^{-m-2}, \label{eqn:MA17_4}
\end{align}
for all $t \in [t_0-0.5K^{-2}r^2, t_0+ 0.5K^{-2}r^2]$. 
\label{lma:MA16_2}
\end{lemma}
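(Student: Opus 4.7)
The plan is to produce $u$ as the forward heat evolution $\square u=0$ of an explicit bump function prescribed at the earlier time $s_0 := t_0-K^{-2}r^2$. Because $\mathbf{vr}(x_0,t_0)=r$ and $\mathbf{scr}(M,g(t))\geq 1$, the two-sided pseudo-locality of Definition~\ref{dfn:MA13_1} forces $|Rm|\leq 4c_a^{-2}r^{-2}$ on the parabolic neighbourhood $B_{g(t_0)}(x_0,\tfrac{1}{2}c_ar)\times[t_0-\tfrac{1}{4}c_a^2r^2, t_0+\tfrac{1}{4}c_a^2r^2]$. Assuming $K$ is at least a dimensional constant, the entire interval $[s_0, t_0+0.5K^{-2}r^2]$ lies inside this neighbourhood, so the metrics $g(t)$ there are uniformly equivalent and the geometry at scale $r/K$ is essentially Euclidean. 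In particular, $|B_{g(s_0)}(x_0, r/K)|\sim (r/K)^m$ and the heat kernel of $(g(t))$ is comparable to the Euclidean one on that parabolic window.

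I would then choose the initial data as follows. Let $\eta$ be a smooth cutoff for the pair $(B_{g(s_0)}(x_0, r/(2K)), B_{g(s_0)}(x_0, r/K))$ whose $g(s_0)$-derivatives are controlled by the natural dimensional powers of $K/r$. Set $u_0:=\beta K^m r^{-m}\eta$, with $\beta$ a dimensional constant chosen so that $\int_M u_0\,d\mu_{g(s_0)}\in[1,2]$, and let $u$ solve $\square u=0$ on $M\times[s_0,T]$ with $u(\cdot,s_0)=u_0$.

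The three bounds are checked in turn. For (\ref{eqn:MA17_2}), the identity $\frac{d}{dt}\int u\,d\mu=-\int Ru\,d\mu$ (combining $\partial_t d\mu=-R\,d\mu$ with $\int\Delta u\,d\mu=0$) together with $|R|\leq 1$ and Gronwall over a window of length at most $1.5K^{-2}r^2\leq 1.5$ gives $\int u\,d\mu\leq e^{1.5}\cdot 2<10$. For the size bound (\ref{eqn:MA17_3}), the upper half is the parabolic maximum principle $\max u(\cdot,t)\leq\max u_0\leq \beta K^m r^{-m}$; the lower half at $x_0$ comes from the heat-kernel comparison on the almost-Euclidean parabolic neighbourhood, which shows that an initial bump of unit mass at scale $r/K$ keeps its center-value of order $K^m r^{-m}$ for parabolic times of order $K^{-2}r^2$. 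Finally (\ref{eqn:MA17_4}) is a direct application of Lemma~\ref{lma:MA13_1}: the first line there propagates $|\nabla u|^2(\cdot,t)\leq \max_{s_0}|\nabla u_0|^2$, and the second line gives $|\Delta u|(\cdot,t)\leq \max_{s_0}|\Delta u_0|+\max_{s_0}|\nabla u_0|^2+(|R|+1)/4$. Plugging in the explicit $u_0$ and the bound $|R|\leq 1$ yields a polynomial bound of the claimed form.

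The main obstacle I anticipate is the lower bound in (\ref{eqn:MA17_3}) on the backward half-interval $[t_0-0.5K^{-2}r^2,t_0]$, where the heat has had very little time to redistribute from its initial concentration. Gradient/Harnack estimates of Cao--Hamilton/Q.~S.~Zhang type (Lemma~\ref{lma:MA16_1}) alone are not sharp enough to secure a matching lower bound at $x_0$; one really has to convert the two-sided pseudo-locality into a quantitative comparison of the heat kernel of $(g(t))$ with the Euclidean heat kernel on the parabolic neighbourhood, using the uniform metric equivalence there. Once that comparison is in hand, the remaining verifications reduce to routine applications of Gronwall's inequality, the maximum principle, and Lemma~\ref{lma:MA13_1}.
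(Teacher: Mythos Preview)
Your proposal is correct and tracks the paper's construction closely: same choice of initial bump at $s_0=t_0-K^{-2}r^2$, same Gronwall argument for (\ref{eqn:MA17_2}), maximum principle for the upper half of (\ref{eqn:MA17_3}), and Lemma~\ref{lma:MA13_1} for (\ref{eqn:MA17_4}). The one genuine difference is the lower bound in (\ref{eqn:MA17_3}). You propose a heat-kernel comparison on the almost-Euclidean parabolic neighbourhood, which does work but requires importing or deriving a local Gaussian lower bound for the Ricci-flow heat kernel under bounded geometry. The paper instead uses Perelman's reduced distance: since $(4\pi\tau)^{-m/2}e^{-l}$ is a subsolution of the conjugate heat operator $\square^*$, the quantity $\int_M u\,(4\pi\tau)^{-m/2}e^{-l}\,d\mu$ is nondecreasing in $t$ and tends to $u(x_0,t)$ as $\tau\to 0$; evaluating at the initial time $s_0$ and bounding $l$ on the support of $\varphi$ (where the geometry is controlled) gives the lower bound directly, uniformly for all $t\in[t_0-0.5K^{-2}r^2,\,t_0+0.5K^{-2}r^2]$. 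The paper's route is more Ricci-flow-native and sidesteps any separate heat-kernel machinery, while your comparison approach is conceptually straightforward but would need its own justification in this setting.
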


\begin{figure}
 \begin{center}
 \psfrag{A}[c][c]{$\color{blue}t=t_0=0$}
 \psfrag{B}[c][c]{$\color{red}t=s_0=-K^{-2}r^2$}
 \psfrag{C}[c][c]{$\color{green}B_{g(s_0)}(x_0, K^{-1}r) \times [s_0, t_0]$}
 \psfrag{D}[c][c]{$t$}
 \psfrag{E}[c][c]{$M$}
 \psfrag{F}[c][c]{$\color{blue}B_{g(t_0)}(x_0, r)$}
 \psfrag{G}[c][c]{$\color{red} B_{g(s_0)}(x_0, K^{-1}r)$}
 \psfrag{H}[c][c]{$(x_0, t_0)$}
 \psfrag{I}[c][c]{$(x_0, s_0)$}
 \includegraphics[width=0.5 \columnwidth]{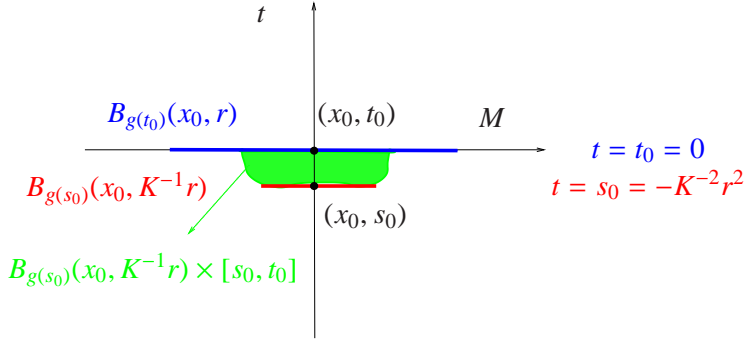}
 \caption{Different domains for heat solution construction}
 \label{fig:heatsolution}
 \end{center}
 \end{figure}

\begin{proof}
 Without loss of generality, we assume $t_0=0$. 
 
 According to (\ref{eqn:MB04_0}) in Definition~\ref{dfn:MA13_1} and our assumption (\ref{eqn:MA16_2}), 
 by adjusting $K$ if necessary, we know $|Rm| \leq 0.01 K^2 r^{-2}$ and $inj \geq 10 K^{-1}r$ inside the ball $B(x_0, K^{-1}r)$, with respect to
 the metric $g(s_0)$.  For simplicity of notation, we define $s_0=-K^{-2}r^2$. 
 Let $\eta$ be a cutoff function which equals $1$ on $(-\infty, 0.5)$ and decreases from $1$ to $0$ on $(0.5, 1)$, and vanishes on $(1, \infty)$.  
 On the time-slice $t=s_0=-K^{-2}r^2$, we set bump function $\varphi=\omega_m K^m r^{-m}\eta(\frac{Kd}{r})$ where $d$ is the distance to $x_0$ under the metric $g(s_0)$.  
 The geometry bound in the ball $B(x_0, K^{-1}r)$ then implies that $\frac{1}{2}<\int_M \varphi d\mu <2$. 
 Starting from time slice $s_0=-K^{-2}r^2$ and the bump function $\varphi$, we solve the heat equation and obtain the solution $u$.  Recall that
 \begin{align*}
     \frac{d}{dt} \int_M u d\mu=\int_M(\square u - Ru) d\mu \leq \int_M u d\mu,   \Rightarrow  \left. \int_M u d\mu \right|_{t}  \leq  \left. e^{t+K^{-2}r^2} \int_M ud\mu \right|_{s_0}< 2 e^{t+K^{-2}r^2}. 
 \end{align*}
 Plugging the fact $t \in [-0.5K^{-2}r^2, 0.5K^{-2}r^2] \subset [s_0, s_0+2 K^{-2} r^2]$ into the above inequality, we obtain (\ref{eqn:MA17_2}). 
 
 We now move on to prove (\ref{eqn:MA17_3}).   For simplicity, we first show (\ref{eqn:MA17_3}) for the time $t=t_0=0$. 
  Note that the upper bound of $u(x_0, 0)$ follows from maximum principle and the fact $\varphi \leq \omega_m K^m r^{-m}$.
 For the lower bound of $u(x_0, 0)$, we need some comparison geometry of the Ricci flow. 
 Let $l$ be the reduced distance to the space-time point $(x_0, 0)$, $\square^*=-\partial_t-\Delta+R$ be the conjugate operator of $\square$, $\tau=-t$.  Then it follows from the celebrated work of
 Perelman that
 \begin{align}
   \square^*  \{(4\pi \tau)^{-\frac{m}{2}} e^{-l} \}=(-\partial_t - \Delta + R) \{(4\pi \tau)^{-\frac{m}{2}} e^{-l} \} \leq 0    \label{eqn:MD03_1}
 \end{align}
 This inequality was written down by Perelman as inequality (7.13) and (7.15) in section 7 and Corollary 9.5 in section 9 of \cite{Pe1}. 
 Note that there is a mistake of statement in the proof of Corollary 9.5~\cite{Pe1}, which is corrected in Corollary 29.23 in Kleiner-Lott~\cite{KL}. 
 Consequently, it follows from (\ref{eqn:MD03_1}) that
 \begin{align*}
    \frac{d}{dt} \int_M u (4\pi \tau)^{-\frac{m}{2}} e^{-l} d\mu =- \int_M u \square^* \left\{(4\pi \tau)^{-\frac{m}{2}} e^{-l} \right\} d\mu  \geq 0. 
 \end{align*}
 Note that $(4\pi \tau)^{-\frac{m}{2}} e^{-l}$ converges to $\delta$-function at $(x_0, 0)$ as $\tau \to 0$. Integrating the above equation from $t=s_0=-K^{-2}r^2$ to $t=0$ implies that
 \begin{align*}
    u(x_0, 0) \geq \left. \int_{M} u \left(4\pi K^{-2}r^2 \right)^{-\frac{m}{2}} e^{-l} d\mu \right|_{t=-K^{-2}r^2}= \left. \int_{M} \varphi \left(4\pi K^{-2}r^2 \right)^{-\frac{m}{2}} e^{-l} d\mu \right|_{t=-K^{-2}r^2}.
 \end{align*}
 On the ball $B_{g(s_0)}(x_0, K^{-1}r)$, metrics $g(t)$ are all equivalent for $t \in [-K^{-2}r^2, 0]$(c.f. Figure~\ref{fig:heatsolution}).
 Then $l \leq \frac{Cd^2}{K^{-2}r^2} \leq C$ on $B_{g(s_0)}(x_0, K^{-1}r)$, where
 $d$ is the distance to $x_0$ with respect to the metric $g(s_0)$.
 One can check R.G. Ye's paper~\cite{Ye08} for more details about this estimate. 
 Consequently, we obtain
 \begin{align*}
   u(x_0, 0) \geq \int_{B(x_0, 0.5 K^{-1}r)} \varphi  \left(4\pi K^{-2}r^2 \right)^{-\frac{m}{2}} e^{-C} d\mu \geq \frac{1}{C} K^m r^{-m}, 
 \end{align*}
 which is exactly the first inequality of (\ref{eqn:MA17_3}), at time $t=t_0=0$.  It is not hard to see that the above proof of (\ref{eqn:MA17_3}) at time $t=0$ can be applied 
 for each $t \in [t_0-0.5K^{-2}r^2, t_0+0.5K^{-2}r^2]$.  Actually, the upper bound follows from maximum princple and the fact $\varphi \leq \omega_m K^m r^{-m}$ at $t=s_0=-K^{-2}r^2$.
 The lower bound follows from the upper bound of reduced distance from $(x_0, t)$ to $(x, s_0)$, for each $x \in B_{g(s_0)}(x_0, K^{-1}r)$. 
 Note that we have the reduced distance estimate since geometry are uniformly bounded on $B_{g(s_0)}(x_0, K^{-1}r) \times [t_0-K^{-2}r^2, t] \subset B_{g(s_0)}(x_0, K^{-1}r) \times [t_0-K^{-2}r^2, t_0+K^{-2}r^2]$.

 Now we prove (\ref{eqn:MA17_4}). Note that the uniform geometry bound around $(x_0, s_0)=(x_0, -K^{-2}r^{2})$ and the choice of $\varphi$ guarantees that
$|\Delta \varphi| + |\nabla \varphi|^2 < Cr^{-2}$.  Therefore, the time-derivative estimate, Lemma~\ref{lma:MA13_1} applies and we obtain
\begin{align*}
 |\nabla u|^2 < Cr^{-2}, \quad  |\Delta u| \leq Cr^{-2} + \frac{R+1}{4}< Cr^{-2}
\end{align*}
for all $t \in [s_0, T-1]$. Note that $s_0=-K^{-2} r^2$ and $[-0.5 K^{-2} r^2, 0.5 K^{-2} r^2] \subset [s_0, T-1]$. 
Therefore, (\ref{eqn:MA17_4}) is proved. 
\end{proof}

\begin{corollary}[\textbf{Volume non-inflating on large scale}]
For each $(x,t) \in M \times [-T+1,T]$ and $\rho>1$, we have 
\begin{align}
    |B(x, \rho)|< C e^{C(\rho+1)^2}   \label{eqn:MA15_0}
\end{align}
where $C=C(m, \kappa)$ is independent of $\mathbf{vr}(x,t)$. The metric $g(t)$ is the default metric in the above inequality. 
\label{cly:MA14_1}
\end{corollary}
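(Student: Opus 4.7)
The plan is to construct an auxiliary positive heat solution $u$ on $M\times[s_0,T]$ with $s_0=t_0-1$ enjoying three uniform properties, all independent of $\mathbf{vr}(x_0,t_0)$: an $L^1$ bound $\int_M u(\cdot,t_0)\,d\mu_{t_0}\le C$, a pointwise lower bound $u(x_0,t_0)\ge c_0>0$, and a pointwise upper bound $u\le A_0$ on $M\times[t_0-\tfrac12,T]$. Feeding these into the Harnack inequality of Lemma~\ref{lma:MA16_1} (with base time $t^*=t_0-\tfrac12$) gives, for $y$ at $g(t_0)$-distance $d\le\rho$ from $x_0$,
\[
u(y,t_0)\ge u(x_0,t_0)\,\exp\!\left(-\Bigl(\tfrac{2d}{\sqrt{1/2}}+\sqrt{\log(A_0/u(x_0,t_0))}\Bigr)^{2}\right)\ge c_0\,e^{-C(\rho+1)^2}.
\]
Integrating this over $B_{g(t_0)}(x_0,\rho)$ against the $L^1$ bound then yields $|B(x_0,\rho)|\le (C/c_0)\,e^{C(\rho+1)^2}$, which is the desired estimate.

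The natural choice is the heat kernel $u(y,t)=K(y,t;x_0,s_0)$. The $L^1$ bound is immediate from $\frac{d}{dt}\int u\,d\mu=-\int Ru\,d\mu$ together with $|R|\le 1$, giving $\int u(\cdot,t_0)\le e$. For the lower bound $u(x_0,t_0)\ge c_0$, I will imitate the Perelman-reduced-distance step from the proof of Lemma~\ref{lma:MA16_2}: the function $v(y,s)=(4\pi(t_0-s))^{-m/2}e^{-l(y,s)}$, with $l$ the reduced distance based at $(x_0,t_0)$, is a sub-solution of the conjugate heat equation by (\ref{eqn:MD03_1}), so $\int uv\,d\mu$ is non-decreasing in $s$. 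Passing $s\to s_0^+$ produces $v(x_0,s_0)$ (since $u(\cdot,s_0)=\delta_{x_0}$), while $s\to t_0^-$ produces $u(x_0,t_0)$, so $u(x_0,t_0)\ge v(x_0,s_0)$. Plugging the constant curve $\gamma\equiv x_0$ into the $\mathcal{L}$-functional and using $|R|\le 1$ yields $L(\gamma)\le \tfrac{2}{3}$, hence $l(x_0,s_0)\le \tfrac{1}{3}$ and $u(x_0,t_0)\ge (4\pi)^{-m/2}e^{-1/3}=:c_0$, a universal constant.

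The uniform upper bound is the delicate ingredient. Since the flow is $\kappa$-noncollapsed on scale $1$ (from the canonical radius property, Definition~\ref{dfn:SC02_1}(1), applied to $\mathbf{scr}\ge 1$) and satisfies $|R|\le 1$, the standard Sobolev/Nash-Moser on-diagonal heat kernel estimate gives $u(y,t)\le C(m,\kappa)(t-s_0)^{-m/2}$ for $t\in(s_0,T]$ (as in Q.S.~Zhang~\cite{Zhq2}). Restricting to $t\in[t_0-\tfrac12,T]$ yields $u\le A_0:=C(m,\kappa)\,2^{m/2}$ on this strip, as needed.

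The main obstacle is precisely this upper bound: a direct use of the bump-function construction from Lemma~\ref{lma:MA16_2} would give only $u\le \omega_m K^m \mathbf{vr}(x_0,t_0)^{-m}$ by the maximum principle, which blows up as $\mathbf{vr}(x_0,t_0)\to 0$ and would contaminate the $\sqrt{\log(A_0/u)}$ factor in the Harnack step, producing an $\mathbf{vr}$-dependent constant. Switching from a bump to the heat kernel, and invoking the on-diagonal bound coming from $\kappa$-noncollapsing and $|R|\le 1$, is what removes the $\mathbf{vr}$-dependence and makes $C=C(m,\kappa)$ truly universal.
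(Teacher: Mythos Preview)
Your argument is correct in its $L^1$ bound and in the lower bound $u(x_0,t_0)\ge c_0$ via Perelman's reduced distance; those steps are clean. The gap is in the upper bound $u\le A_0=C(m,\kappa)$. An on-diagonal heat kernel estimate $K(y,t;x_0,s_0)\le C(m,\kappa)(t-s_0)^{-m/2}$ does not follow from Zhang's gradient estimate~\cite{Zhq2}, which is what Lemma~\ref{lma:MA16_1} records; that paper gives $|\nabla u|/u$ bounds, not $\sup u$ bounds. More importantly, such an on-diagonal bound typically requires a uniform Sobolev or log-Sobolev inequality, equivalently a lower bound on Perelman's $\mu$-functional. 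The standing hypotheses of Section~\ref{sec:scrbd} give only $\mathbf{scr}\ge 1$ (hence the two-sided volume ratio bounds $\kappa\le\omega_m^{-1}r^{-m}|B(x,r)|\le\kappa^{-1}$ for $r<1$) and $|R|\le 1$; they do \emph{not} include a $\mu$-bound, and Remark~\ref{rmk:MD28_2} makes explicit that assuming one would give a sharper statement than~(\ref{eqn:MA15_0}). So the step ``$\kappa$-noncollapsed $+$ $|R|\le 1$ $\Rightarrow$ $u\le C(t-s_0)^{-m/2}$'' is exactly what is missing, and without it the Harnack term $\sqrt{\log(A_0/u(x_0,t_0))}$ is uncontrolled.

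The paper sidesteps this entirely with a one-line trick you overlooked: the density estimate (property 3 in Definition~\ref{dfn:SC02_1}), which is part of $\mathbf{cr}\ge 1$, forces the existence of a point $x_0\in B_{g(t)}(x,1)$ with $\mathbf{vr}(x_0,t)>c=c(m,\kappa)$. Then $B(x,\rho)\subset B(x_0,\rho+1)$, and one runs the bump-function heat solution of Lemma~\ref{lma:MA16_2} centered at $x_0$. Because $\mathbf{vr}(x_0,t)>c$ is now a \emph{universal} lower bound, the maximum-principle upper bound $u\le \omega_m K^m r^{-m}\le \omega_m K^m c^{-m}$ is a constant depending only on $(m,\kappa)$, and the Harnack step goes through exactly as you wrote, producing (\ref{eqn:MA15_0}). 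In short: rather than upgrading the heat kernel estimate, the paper upgrades the base point.
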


\begin{proof}
Without loss of generality, we assume $t=0$.  Then $g(0)$ is the default metric in the following discussion. 

By density estimate, we can find a point $x_0 \in B(x, 1)$ such that $\mathbf{vr}(x_0, 0)>c$ for some small $c=c(m, \kappa)$. 
Triangle inequality implies $B(x, \rho) \subset B(x_0, \rho+1)$. Therefore, (\ref{eqn:MA15_0}) follows from the following inequality
\begin{align}
  |B(x_0, \rho)| < C e^{C\rho^2},  \quad \forall \rho>0.        \label{eqn:MD12_1}
\end{align}
We now focus on the proof of (\ref{eqn:MD12_1}). 
Since $\mathbf{vr}(x_0,0)> c$, we can construct a heat solution $u$ as in Lemma~\ref{lma:MA16_2}. 
By Lemma~\ref{lma:MA16_1},  $u$ satisfies the following inequality at time $t=0$. 
\begin{align*}
  u(y, 0)> u(x_0, 0)e^{-\left( \frac{2K\rho}{c} + \sqrt{\log \frac{CK^m}{c^mu(x_0,0)}}\right)^2}> \frac{K^m}{Cc^m}e^{-\left( \frac{2K\rho}{c} + C\right)^2}> \frac{K^m}{Cc^m} e^{-\frac{CK^2\rho^2}{c^2}},
\end{align*}
in the ball $B(x, \rho)$, where $g(0)$ is the default metric. 
Note that $C,K$ are constants depending at most on $m, \kappa$.  On the other hand, by Lemma~\ref{lma:MA16_2}, we know $\int_M ud\mu < 10$.  It follows that
\begin{align*}
 \frac{K^m}{Cc^m} e^{-\frac{CK^2\rho^2}{c^2}}|B(x_0, \rho)|<\int_{B(x_0,\rho)} ud\mu <\int_{M} ud\mu <10, \quad \Rightarrow \quad  |B(x_0, \rho)| < \frac{10C c^m}{K^m}e^{\frac{CK^2\rho^2}{c^2}}. 
\end{align*}
Then (\ref{eqn:MD12_1}) follows from the above inequality by choosing proper new $C$. 
\end{proof}

\begin{remark}
 If $\displaystyle \inf_{0<\theta<1} \mu(g(t), \theta) \geq -C$ uniformly, then a better volume ratio upper bound than (\ref{eqn:MA15_0}) can be obtained from Chen-Wang~\cite{CW5} and Q.S. Zhang~\cite{Zhq3}.
\label{rmk:MD28_2}  
\end{remark}


\begin{corollary}[\textbf{Density  estimate on large scale}]
  There is a $C=C(m,\kappa, \rho)$ such that
  \begin{align*}
     \int_{B_{g(t_0)}(x_0, \rho)} \mathbf{vr}^{-2p_0} d\mu < C.  
  \end{align*}
\label{cly:MA15_1}  
\end{corollary}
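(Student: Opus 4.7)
The plan is to reduce the estimate on the large ball $B_{g(t_0)}(x_0, \rho)$ to a finite sum of density estimates on unit balls, where Definition~\ref{dfn:SC02_1} (item 3) directly applies thanks to the assumption $\mathbf{scr}(M, g(t)) \geq 1$. Concretely, the canonical radius hypothesis guarantees
\begin{align*}
  \int_{B(y, 1)} \mathbf{vr}(z)^{-2p_0}\, d\mu(z) \leq 2\mathbf{E}
\end{align*}
for every $y \in M$ and every $t \in [-T+1, T-1]$, so once we cover $B_{g(t_0)}(x_0, \rho)$ by a controlled number of unit balls, the corollary follows by summing.

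First, I would choose a maximal collection of points $\{y_1, \dots, y_N\} \subset B_{g(t_0)}(x_0, \rho)$ with pairwise $g(t_0)$-distance at least $1$. Maximality forces $B_{g(t_0)}(x_0, \rho) \subset \bigcup_{i=1}^{N} B_{g(t_0)}(y_i, 1)$, while the half-balls $B_{g(t_0)}(y_i, 1/2)$ are pairwise disjoint and all contained in $B_{g(t_0)}(x_0, \rho+1/2)$.

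Next I bound $N$. The volume ratio clause of Definition~\ref{dfn:SC02_1} applied at scale $1/2$ (which is permissible since $\mathbf{cr}(y_i) \geq 1$) gives a uniform lower bound $|B_{g(t_0)}(y_i, 1/2)| \geq \kappa \omega_m 2^{-m}$. On the other hand, Corollary~\ref{cly:MA14_1} yields an upper bound $|B_{g(t_0)}(x_0, \rho+1/2)| \leq C e^{C(\rho+3/2)^2}$. Comparing these, I obtain $N \leq C(m, \kappa, \rho)$.

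Finally, combining the covering with the unit-scale density estimate:
\begin{align*}
  \int_{B_{g(t_0)}(x_0, \rho)} \mathbf{vr}^{-2p_0}\, d\mu \;\leq\; \sum_{i=1}^{N} \int_{B_{g(t_0)}(y_i, 1)} \mathbf{vr}^{-2p_0}\, d\mu \;\leq\; 2 N \mathbf{E} \;\leq\; C(m, \kappa, \rho),
\end{align*}
which is the desired bound. No step is truly an obstacle here — the proof is essentially a packing argument — but the one place that requires the earlier technology is the volume upper bound on $B_{g(t_0)}(x_0, \rho+1/2)$, which is exactly the content of Corollary~\ref{cly:MA14_1} and where the heat kernel construction of Lemma~\ref{lma:MA16_2} enters indirectly.
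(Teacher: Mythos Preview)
Your proposal is correct and follows essentially the same approach as the paper: apply the unit-scale density estimate from the canonical radius assumption (property 3 of Definition~\ref{dfn:SC02_1}) on each unit ball, then cover $B_{g(t_0)}(x_0,\rho)$ by a bounded number of unit balls via a standard packing argument using the non-collapsing lower bound together with the volume upper bound from Corollary~\ref{cly:MA14_1}. The paper merely sketches this; your version fills in the details faithfully.
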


\begin{proof}
  Recall that $\mathbf{scr}(M, 0) \geq 1$ by assumption (\ref{eqn:MA16_2}).  In particular, $\mathbf{cr}(M,0) \geq 1$ by Definition~\ref{dfn:MA13_1}.  
  By the density estimate in canonical radius assumption, i.e., property 3 in Definition~\ref{dfn:SC02_1},  it is clear that $\int \mathbf{vr}^{-2p_0}$ is uniformly bounded from above for each unit ball.   Then it suffices to show that 
  $B_{g(t_0)}(x_0, \rho)$ can be covered by finite number of unit balls, and this number is uniformly bounded.  However, due to the uniform non-collapsing of each unit ball
  and the volume upper bound by Corollary~\ref{cly:MA14_1}, this follows from a standard ball-packing argument. 
\end{proof}

The following distance estimate will follow the route of section 5.3 of~\cite{CW6}. 

\begin{lemma}[\textbf{Small-scale rough distance estimate}]
If $\mathbf{vr}(x_0, t_0)=r<K^{-1}$ and $d_{g(t_0)}(x_0, y_0)<1$,  then there is a small constant $\epsilon=\epsilon(m,\kappa, r)$ and a big constant $L=L(m,\kappa, r)$ such that
\begin{align}
   d_{g(t)}(x_0, y_0)< L, \quad \forall \; t \in [t_0-\epsilon, t_0 +\epsilon].  
\label{eqn:MA13_1}   
\end{align}
\label{lma:MA13_3}
\end{lemma}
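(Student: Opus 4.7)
The plan is to reduce the lemma to the covering/packing argument from the proof of Theorem~\ref{thm:MA16_1}, applied to the minimizing $g(t_0)$-geodesic from $x_0$ to $y_0$, after manufacturing an auxiliary heat solution $u$ that is bounded below by a positive constant $c=c(m,\kappa,r)$ along this geodesic throughout a small time interval around $t_0$.

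First I would invoke Lemma~\ref{lma:MA16_2} at the space-time point $(x_0,t_0)$, using $\mathbf{vr}(x_0,t_0)=r<K^{-1}$ and $\mathbf{scr}(M,t_0)\geq 1$, to produce a positive solution $u$ of $\square u=0$ on $M\times[s_0,T-1]$ with $s_0=t_0-K^{-2}r^2$, satisfying $\int_M u\,d\mu<10$, $K^mr^{-m}/C<u(x_0,t)<CK^mr^{-m}$, and $|\nabla u|^2+|\Delta u|\leq CK^{m+2}r^{-m-2}$ on the window $[t_0-\tfrac12 K^{-2}r^2,t_0+\tfrac12 K^{-2}r^2]$. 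Since $\square u=0$ along the Ricci flow forces $\partial_t(\sup_M u)\leq 0$ by the maximum principle, one further has the global time-independent upper bound $u\leq\omega_m K^m r^{-m}$ on $M\times[s_0,\infty)$, inherited from the initial bump $\varphi\leq\omega_m K^mr^{-m}$ constructed in Lemma~\ref{lma:MA16_2}.

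Next I would apply the Cao--Hamilton--Zhang Harnack inequality, Lemma~\ref{lma:MA16_1}, at time $t_0$ with starting time $s_0$ and upper bound $A=\omega_m K^mr^{-m}$. Since $t_0-s_0=K^{-2}r^2$ and $u(x_0,t_0)\geq K^mr^{-m}/C$, for every $y$ with $d_{g(t_0)}(x_0,y)\leq 1$ one gets
\begin{align*}
u(y,t_0)\;\geq\; u(x_0,t_0)\,\exp\!\left(-\bigl(\tfrac{2K}{r}+\sqrt{\log(C\omega_m)}\bigr)^{2}\right)\;\geq\;c_2(m,\kappa,r)\;>\;0.
\end{align*}
The time-derivative bound $|\dot u|=|\Delta u|\leq CK^{m+2}r^{-m-2}$ then propagates this to $u(\cdot,t)\geq c_2/2$ on the point set $B_{g(t_0)}(x_0,1)$ for all $t\in[t_0-\epsilon,t_0+\epsilon]$, provided $\epsilon=\epsilon(m,\kappa,r)$ is chosen smaller than both $\tfrac12 K^{-2}r^2$ and $c_2/(2CK^{m+2}r^{-m-2})$.

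Finally, fix such a $t$, let $\gamma$ be a minimizing $g(t_0)$-geodesic from $x_0$ to $y_0$ (so $\gamma\subset B_{g(t_0)}(x_0,1)$ and $u(\cdot,t)\geq c_2/2$ along $\gamma$), and rerun the packing estimate from the proof of Theorem~\ref{thm:MA16_1} with $g(t)$-balls: the gradient bound enlarges the inequality to $u(\cdot,t)\geq c_2/4$ on a $g(t)$-tube of radius $\rho=\rho(m,\kappa,r)$ about $\gamma$, the non-collapsing coming from $\mathbf{scr}\geq 1$ delivers $|B_{g(t)}(z,\rho/5)|\geq\kappa(\rho/5)^m$, and the global mass bound $\int_M u\,d\mu<10$ caps the number $N$ of disjoint such balls needed to chain $x_0$ to $y_0$. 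One concludes $d_{g(t)}(x_0,y_0)\leq 2N\rho\leq L(m,\kappa,r)$. The delicate step I expect to be the main obstacle is ensuring that the upper bound on $u$ used in the Harnack is genuinely global and time-independent (which the max-principle step above provides) and that the constants $c_2,\rho,\epsilon,N$ depend only on $m,\kappa,r$; the latter is routine but must be tracked with care because the scales produced by Lemma~\ref{lma:MA16_2} degenerate as $r\to 0$.
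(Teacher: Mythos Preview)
Your proposal is correct and follows essentially the same approach as the paper: construct the heat solution $u$ from Lemma~\ref{lma:MA16_2}, use the Harnack inequality (\ref{eqn:MA16_7}) together with the lower bound (\ref{eqn:MA17_3}) to get a uniform lower bound for $u(\cdot,t_0)$ on $B_{g(t_0)}(x_0,1)$, propagate it in time via the $|\Delta u|$ bound (\ref{eqn:MA17_4}), and then invoke the packing argument behind Theorem~\ref{thm:MA16_1}. The paper simply says ``apply Theorem~\ref{thm:MA16_1}'' at the last step, whereas you unpack that step explicitly; your additional remark that a global time-independent upper bound on $u$ (inherited by the maximum principle from the initial bump) is needed for Lemma~\ref{lma:MA16_1} is a valid point the paper leaves implicit.
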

\begin{proof}
Without loss of generality, we assume $t_0=0$. 

Choose $u$ as a heat solution constructed in Lemma~\ref{lma:MA16_2}.   
By  inequality (\ref{eqn:MA16_7}) and (\ref{eqn:MA17_3}), we obtain
$u(\cdot,0)$ is uniformly bounded from below by a small constant, say $c=c(m,\kappa,r)$, 
in the ball $B_{g(0)}(x_0, 1)$.  Recall that from (\ref{eqn:MA17_4}) we have $|\dot{u}|=|\Delta u|<CK^{m+2}r^{-m-2}$ on $B_{g(0)}(x_0, 1) \times [-0.5K^{-2}r^2, 0.5 K^{-2}r^2]$. 
Therefore, for some uniformly small $\epsilon=\epsilon(m,\kappa,r)$, we have
\begin{align*}
   \int_{-\epsilon}^{\epsilon}  |\Delta u| dt <0.2 c, \quad  \Rightarrow  \quad u(y, t)>0.8 c, \quad \forall \; y \in B_{g(0)}(x_0, 1).  
\end{align*}
Then we apply the metric distortion principle, Theorem~\ref{thm:MA16_1},  for the function $u$ to obtain (\ref{eqn:MA13_1}). 
\end{proof}

\begin{lemma}[\textbf{Large-scale rough distance estimate}]
  Suppose $\mathbf{vr}(x_0,t)>r$ for all $t \in [t_0-T_0, t_0+T_0]$. 
  Then there is a uniform small constant $\epsilon=\epsilon(m,\kappa,r)$ and a big constant $L=L(m,\kappa,r)$ such that
  \begin{align}
    d_{g(t)}(x_0, y_0) <  L e^{\frac{|t-t_0|}{\epsilon}} \{d_{g(t_0)} (x_0, y_0)+1\}, \quad \forall \; t \in [t_0-T_0, t_0+T_0].
  \label{eqn:MA13_2}  
  \end{align}
  
  \label{lma:MA13_2}
\end{lemma}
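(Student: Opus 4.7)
The plan is to bootstrap Lemma~\ref{lma:MA13_3} in two stages: first spatially, to upgrade its unit-distance hypothesis into a one-step estimate whose right-hand side is linear in $d_{g(t_0)}(x_0,y_0)$, and then temporally, to iterate this one-step bound on time increments of size $\epsilon_0$ so that the exponential factor $e^{|t-t_0|/\epsilon}$ in (\ref{eqn:MA13_2}) emerges from a discrete Gr\"onwall iteration.

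For the spatial step, I would set $D=d_{g(t_0)}(x_0,y_0)$, pick a minimizing $g(t_0)$-geodesic, and subdivide it into pieces of length at most $\tfrac14$, giving points $z_0=x_0, z_1, \dots, z_N=y_0$ with $N\le 4D+2$. Corollary~\ref{cly:MA15_1} applied at time $t_0$ bounds $\int\mathbf{vr}^{-2p_0}$ on every unit ball; Markov's inequality then locates, in each such unit ball, a point whose $\mathbf{vr}(\cdot,t_0)$ is at least some $c=c(m,\kappa)$. This lets me replace every interior $z_i$ by a nearby substitute pivot $z_i'$ with $d_{g(t_0)}(z_i,z_i')<\tfrac14$ and $\mathbf{vr}(z_i',t_0)\ge c$; set $z_0'=x_0$ (whose $\mathbf{vr}(\cdot,t_0)>r$ by hypothesis) and $z_N'=y_0$. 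Each consecutive pair $(z_i',z_{i+1}')$ then has $g(t_0)$-distance less than $1$, and its left endpoint has $\mathbf{vr}\ge\min(r,c)$, so Lemma~\ref{lma:MA13_3} yields uniform $\epsilon_0, L_0$ depending only on $m,\kappa,r$ with $d_{g(t)}(z_i',z_{i+1}')<L_0$ for every $|t-t_0|\le\epsilon_0$. Summing via the triangle inequality and using $N\le 4D+2$ produces the one-step bound
\begin{align*}
d_{g(t)}(x_0,y_0)\le L_0\,N\le L_1\bigl(d_{g(t_0)}(x_0,y_0)+1\bigr),\qquad |t-t_0|\le\epsilon_0,
\end{align*}
with $L_1=4L_0$.

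The temporal iteration is then routine: since $\mathbf{vr}(x_0,t)>r$ persists on all of $[t_0-T_0,t_0+T_0]$ and Corollary~\ref{cly:MA15_1} holds at every time slice, I apply the one-step bound with base time $t_0+k\epsilon_0$ for $k=0,1,2,\dots$ (as long as this lies in $[t_0-T_0,t_0+T_0]$), obtaining by induction $d_{g(t_0+k\epsilon_0)}(x_0,y_0)\le L_1^{k+1}\bigl(d_{g(t_0)}(x_0,y_0)+1\bigr)$. Writing $L_1^{|t-t_0|/\epsilon_0}=e^{|t-t_0|\log L_1/\epsilon_0}$ and renaming $\epsilon:=\epsilon_0/\log(2L_1)$, $L:=2L_1^2$ (the extra factor of $L_1$ handles interpolation between consecutive time marks via the one-step bound once more), we arrive at (\ref{eqn:MA13_2}).

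The main obstacle is the pivot selection. A priori there is no $\mathbf{vr}$ control at $y_0$ or at the interior geodesic points $z_i$, so Lemma~\ref{lma:MA13_3} cannot be invoked centered at those points directly; the density estimate is exactly what makes the substitution $z_i\mapsto z_i'$ uniform, and the pivot lower bound $c(m,\kappa)$ being independent of the position along $\gamma$ is essential so that $\epsilon_0,L_0$ remain uniform in $i$ and $N$ remains linear in $D$. Note that $y_0$ itself appears only as the right endpoint of the single small-scale application centered at the last pivot $z_{N-1}'$, so its own canonical radius never enters the estimate — the only dependence on $r$ comes from the first segment, which uses $x_0$ as its pivot.
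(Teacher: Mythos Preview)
Your argument is correct, and it takes a genuinely different route from the paper's. The paper's proof uses only a \emph{single} pivot per time step: at time $k\theta_0$ it selects one point $z\in B_{g(k\theta_0)}(y_0,Kr)\cap\mathcal{F}_r$, invokes Lemma~\ref{lma:MA13_3} once to control $d_{g((k+1)\theta_0)}(z,y_0)$, and then handles the long segment from $x_0$ to $z$ by the \emph{connectivity estimate} (property 4 of Definition~\ref{dfn:SC02_1}) to produce a curve $\gamma\subset\mathcal{F}_{\epsilon_b r}$ with $|\gamma|<4d(z,x_0)$; the two-sided pseudo-locality in the $\mathbf{scr}$ assumption keeps $\gamma$ inside $\mathcal{F}_{\epsilon_b^2 r}$ over the time step, so $|Ric|$ is bounded on $\gamma$ and $|\gamma|_{g((k+1)\theta_0)}\le 10|\gamma|_{g(k\theta_0)}$. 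The difference inequality $d_{k+1}+C'\le 40(d_k+C')$ then follows. By contrast, you never invoke the connectivity estimate or the explicit Ricci bound on a long curve at all: you subdivide the $g(t_0)$-geodesic into $N\sim D$ short pieces, pivot every subdivision point into $\mathcal{F}_{c(m,\kappa)}$ via the density estimate, and apply Lemma~\ref{lma:MA13_3} to each of the $N$ short pairs. Your approach is more elementary in that it uses only the density estimate and Lemma~\ref{lma:MA13_3} as black boxes; the paper's approach leans on more of the canonical-radius structure (connectivity plus pseudo-locality plus curvature bounds in $\mathcal{F}$), which is closer in spirit to how those ingredients are used elsewhere in the paper. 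Both yield the same linear-in-$D$ one-step bound and hence the same exponential growth after time iteration.
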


Notice that the constants $L$ and $\epsilon$ are independent of $\mathbf{vr}(y_0, t)$.  However, the condition $\mathbf{vr}(x_0, t)>r$ for all $t \in [t_0-T_0, t_0+T_0]$ is quite strong. 

\begin{proof}
Recall from Definition~\ref{dfn:MA13_1} that $c_a$ is the constant appears in (\ref{eqn:MB04_0}), $\epsilon_b$ is the constant appears in the connectivity estimate.
In Lemma~\ref{lma:MA13_3}, we can shrink $\epsilon$ if necessary such that $\epsilon \leq  \frac{1}{100} \min \left\{\epsilon_b^2, c_a^2 \right\}$.
 
Without loss of generality, we assume $t>t_0$.  By adjusting $t$ a little bit if necessary,  we may assume $\frac{t-t_0}{\epsilon}$ is a positive integer and we denote it by $N$.
  For simplicity of notation, denote $\epsilon$ by $\theta_0$.  Then $t-t_0=N\theta_0$. 
  We shall deduce the estimate (\ref{eqn:MA13_2}) by setting up a ``difference equation" with each step length $\theta_0$.

  At time $k \theta_0$, we can find a point $z \in B(y_0, Kr) \cap \mathcal{F}_r$, with respect to the metric $g(k\theta_0)$.  The existence of $z$ follows from the density estimate in the canonical radius assumption. 
  Note that $Kr<1$.  So we apply Lemma~\ref{lma:MA13_3} to $z$ and $y_0$ to obtain that
  \begin{align}
    d_{g([k+1]\theta_0)}(z,y_0)<C(r).     \label{eqn:MD16_1}
  \end{align}
  Recall that both $x_0$ and $z$ locates in $\mathcal{F}_r$ at time $k\theta_0$, by connectivity estimate, one can construct a curve $\gamma$ connecting $z$ and $x_0$ at this time such that $|\gamma|<4d(z, x_0)$
  and $\gamma \subset \mathcal{F}_{\epsilon_b r}$.  By assumption inequality (\ref{eqn:MA16_2}), the estimate (\ref{eqn:MB04_0}) in Definition~\ref{dfn:MA13_1} implies that  $\gamma \subset \mathcal{F}_{\epsilon_b^2 r}$ for all time $t \in [k\theta_0, (k+1)\theta_0]$.  Since $|Ric|$ is uniformly
  bounded inside $\mathcal{F}_{\epsilon_b^2 r}$, we can easily check that 
  \begin{align}
    |\gamma|_{g([k+1]\theta_0)}< 10|\gamma|_{g(k \theta_0)}<40 d_{g(k\theta_0)}(z, x_0).   \label{eqn:MD16_2}
  \end{align}
  In light of (\ref{eqn:MD16_1}) and (\ref{eqn:MD16_2}), the triangle inequality implies that
  \begin{align*}
    d_{g([k+1]\theta_0)}(x_0, y_0) &\leq d_{g([k+1]\theta_0)}(z, y_0) + d_{g([k+1]\theta_0)}(z, x_0)
       \leq d_{g([k+1]\theta_0)}(z, y_0) + |\gamma|_{g([k+1]\theta_0)}\\
       &< C + 40 d_{g(k\theta_0)}(z, x_0)
        \leq 40 d_{g(k\theta_0)}(y_0, x_0) + 40+C.
  \end{align*}
  Let $C'=\frac{40+C}{39}$, we obtain the difference equation
  \begin{align}
        d_{g([k+1]\theta_0)}(x_0, y_0) + C' \leq 40 \left\{ d_{g(k\theta_0)}(x_0, y_0) + C' \right\}. 
  \label{eqn:MA19_1}      
  \end{align}
  Then it follows directly that
  \begin{align*}
   d_{g(t)}(x_0, y_0)+C'=d_{g(N\theta_0)}(x_0, y_0)+C' \leq 40^{N}  \left\{ d_{g(0)}(x_0, y_0) +C' \right\}. 
  \end{align*}
  Recalling that $C'=\frac{40+C}{39}<C$ and $t-t_0=N\theta_0=N\epsilon$,  the above inequality implies that
  \begin{align}
    d_{g(t)}(x_0, y_0) \leq 40^{N} \left\{ d_{g(0)}(x_0, y_0) +C \right\} \leq C 40^{N} d_{g(0)}(x_0,y_0)=C 40^{\frac{t-t_0}{\epsilon}} d_{g(0)}(x_0,y_0). 
  \label{eqn:MA19_2}  
  \end{align}
  Let $\epsilon_d=\frac{\epsilon}{\log 40}$ and denote the last constant $C$ by $L$.
  Then (\ref{eqn:MA13_2}) follows from the above inequality for the case when $t>t_0$.
  
  If $t<t_0$, we shall move back-ward. Instead of using forward pseudo-locality, we now use the backward pseudo-locality assumption(c.f. Definition~\ref{dfn:MA13_1} and inequalities (\ref{eqn:MA16_2})). 
  Now let $N=\frac{t-t_0}{\theta_0}$. The same argument before leads to a difference equation similar to (\ref{eqn:MA19_1}):
   \begin{align*}
        d_{g(-[k+1]\theta_0)}(x_0, y_0) + C' \leq 40 \left\{ d_{g(-k\theta_0)}(x_0, y_0) + C' \right\}. 
  \end{align*}
  Solving the above difference equation, we obtain the result analogue to (\ref{eqn:MA19_2}):
  \begin{align*}
    d_{g(t)}(x_0, y_0) \leq L e^{\frac{t_0-t}{\epsilon}} d_{g(0)}(x_0, y_0), 
  \end{align*}
  which is same as (\ref{eqn:MA13_2}) since $t<t_0$. 
  
\end{proof}

After we obtain the rough estimate, we shall improve them by using the condition that scalar curvature is very small. 
This is not surprising since if this quantity is precisely zero, then the flow is static, i.e., $Ric=0$ and we have everything same as the model space.
When scalar curvature is small, there should exist an ``almost" version.  Similar ideas can also be found in Chen-Wang~\cite{CW3}, \cite{CW6}, and Tian-Wang~\cite{TW}. 
For simplicity of notations, we define 
\begin{align}
 S \triangleq \sup_{M \times [-T, T]} |R|.   \quad  \label{eqn:MB03_1}
\end{align}

\begin{lemma}[\textbf{Curvature estimates}]

There exist a big constant $C=C(m,\kappa)$ and a small constant $c=c(m,\kappa)$ with the following properties. 

Suppose $\mathbf{vr}(x_0, t_0)=r$.
For every $(x,t) \in B_{g(0)}(x_0, cr) \times [-c^2 r^2, c^2 r^2]$, we have
\begin{align}
  & |Rm|(x,t) < Cr^{-2},  \label{eqn:MA18_2}\\
  & |Ric|(x,t) < Cr^{-1} \sqrt{S}, \label{eqn:MA18_3} 
\end{align}
\label{lma:MA18_1}
\end{lemma}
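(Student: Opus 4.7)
The plan is to deduce (\ref{eqn:MA18_2}) directly from the space-time canonical radius hypothesis, and then to derive (\ref{eqn:MA18_3}) by coupling this Riemann bound with the scalar curvature evolution equation together with the global scalar bound $|R|\leq S$ coming from assumption (\ref{eqn:MA16_2}).

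For (\ref{eqn:MA18_2}), I would first observe that the definition of $\mathbf{vr}$ in Definition~\ref{dfn:SC24_1} together with the hypothesis $\mathbf{vr}(x_0,t_0)=r$ gives $\omega_m^{-1}r^{-m}|B_{g(t_0)}(x_0,r)|\geq 1-\delta_0$. Since assumption (\ref{eqn:MA16_2}) provides $\mathbf{scr}(M,g(t_0))\geq 1\geq r$, the two-sided pseudolocality inequality (\ref{eqn:MB04_0}) in Definition~\ref{dfn:MA13_1} applies at scale $r$ and yields $|Rm|(x,t)\leq 4c_a^{-2}r^{-2}$ on the parabolic neighborhood $B_{g(t_0)}(x_0,\tfrac{1}{2}c_a r)\times[t_0-\tfrac{1}{4}c_a^2 r^2,\,t_0+\tfrac{1}{4}c_a^2 r^2]$. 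Choosing $c\leq\tfrac{1}{2}c_a$ and absorbing the geometric constants gives (\ref{eqn:MA18_2}).

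For (\ref{eqn:MA18_3}), I would use the standard scalar curvature evolution $\partial_t R=\Delta R+2|Ric|^2$, rewritten as $2|Ric|^2=\partial_t R-\Delta R$. On the parabolic cylinder from the previous step, Shi's derivative estimates (enabled by the Riemann bound just obtained) supply $|\nabla^j R|+|\partial_t^j R|\leq C_j r^{-2-j}$ on a slightly smaller cylinder, for every $j\geq 0$. I would then couple these high-order bounds with the pointwise control $|R|\leq S$ through Landau-Kolmogorov interpolation in both space and time; after parabolic rescaling by $r^{-2}$ this is the statement that if $|\widetilde{Rm}|\leq C$ on a unit parabolic cylinder and $|\widetilde{R}|\leq\varepsilon:=r^2 S$ globally, then $|\partial_s\widetilde{R}|+|\widetilde{\Delta}\widetilde{R}|\leq C\varepsilon$ at $(x_0,0)$. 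Scaling back and substituting into the rewritten evolution equation then yields $|Ric|^2\leq Cr^{-2}S$, which is (\ref{eqn:MA18_3}).

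The main obstacle is making the interpolation in the second step quantitatively sharp enough to produce the exact power $\sqrt{S}$ and scale $r^{-1}$ asserted in (\ref{eqn:MA18_3}). A direct Landau-Kolmogorov interpolation between $\sup|\widetilde{R}|=\varepsilon$ and any \emph{finite}-order Shi bound only gives $|\widetilde{\Delta}\widetilde{R}|\lesssim\varepsilon^{1-o(1)}$, which would yield the weaker $|Ric|\leq Cr^{-1+o(1)}S^{1/2-o(1)}$. To reach the full power one either iterates along the infinite tower of Shi estimates and absorbs the loss into the constant $C=C(m,\kappa)$, or exploits the sign $2|Ric|^2\geq 0$ of the source in the scalar curvature equation (so that $R$ is subcaloric and parabolic maximum-principle / mean-value estimates can amplify the bound $|R|\leq S$ into pointwise control on $\partial_t R$ and $\Delta R$ of the same order). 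This quantitative dependence of $|Ric|$ on $S$ is precisely what is needed later in Section~\ref{sec:scrbd}, where assumption (\ref{eqn:SL06_1}) forces $S\to 0$ and thus drives the limit space into the K\"ahler Ricci-flat model moduli $\widetilde{\mathscr{KS}}(n,\kappa)$.
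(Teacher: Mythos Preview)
Your argument for (\ref{eqn:MA18_2}) is correct and matches the paper's: the Riemann bound follows directly from the $\mathbf{scr}$ assumption via (\ref{eqn:MB04_0}).

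For (\ref{eqn:MA18_3}) there is a genuine gap. As you yourself note, Landau--Kolmogorov interpolation between $|\widetilde R|\leq\varepsilon$ and a $k$-th order Shi bound only yields $|\widetilde\Delta\widetilde R|+|\partial_s\widetilde R|\leq C_k\,\varepsilon^{1-2/k}$ with $C_k$ growing in $k$; you cannot send $k\to\infty$ and keep a uniform constant, so this route produces at best $|Ric|\leq C_\delta r^{-1}S^{1/2-\delta}$ for each fixed $\delta>0$, not the claimed $Cr^{-1}\sqrt S$. Your second suggestion (exploit the sign $2|Ric|^2\geq 0$ via a maximum-principle argument to control $\partial_t R$ and $\Delta R$ \emph{individually} at order $S$) also fails: those two quantities can each be of order $r^{-2}$ even when $R$ is tiny, and no parabolic mean-value inequality bounds them separately by $S$.

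What the evolution equation \emph{does} buy is an \emph{integral} estimate, and this is the paper's route. After rescaling to $cr=1$, one integrates $\partial_t R=\Delta R+2|Ric|^2$ against a spatial cutoff over $[t_1,t_2]$; the left side becomes boundary and lower-order terms involving only $\int|R|$, so that $\int_{-2}^{2}\int_{B(x_0,2)}|Ric|^2\,d\mu\,dt\leq CS$. A second, independent ingredient is then needed: a parabolic Moser iteration (available because, under the Riemann bound, $|Ric|^2$ satisfies $(\partial_t-\Delta)|Ric|^2\leq C|Ric|^2$) passes from this space-time $L^2$ control to $\sup|Ric|\leq C\sqrt S$. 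Both steps are carried out in Wang~\cite{Wa2} (Theorem 3.2 there) and invoked verbatim in the paper; this two-step integral argument is precisely what your pointwise interpolation cannot replace.
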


\begin{proof}
 The Riemannian curvature estimate (\ref{eqn:MA18_2}) follows directly from the canonical assumption and the fact $\mathbf{vr}(x_0,t_0)=r$. 
 Hence, we only need to show (\ref{eqn:MA18_3}). Without loss of generality, we choose $c=\frac{c_a}{1000m}$, the constant in Definition~\ref{dfn:SC02_1},  and rescale the flow such that $c r=1$. 
 By scaling properties, it suffices to show that
 \begin{align}
    |Ric|(x,t) \leq  C \sqrt{S}, \quad \forall \; (x,t) \in B_{g(0)}(x_0, 1) \times [-1, 1]. 
 \label{eqn:MB04_1}   
 \end{align}
 The proof (\ref{eqn:MB04_1}) is almost the same as that of Theorem 3.2 of~\cite{Wa2}.   We repeat the basic steps of the argument here for the convenience of the readers.
 In fact,  according to our choice of $c$, it is clear that 
 \begin{align}
 |Rm|(x,t) \leq \frac{1}{m^2}, \quad inj(x,t) \geq 100, \quad \forall x \in B_{g(0)}(x_0, 4), \quad t \in [-16, 16].      \label{eqn:MB04_2}
 \end{align}
 Actually, from the evolution equation of scalar curvature and Moser iteration(c.f. Theorem 3.2 of~\cite{Wa2}), we have
\begin{align}
  \sup_{B_{g(0)}(x_0, 1) \times [-1, 1]} |Ric| \leq C(m) \left \{\int_{-2}^{2} \int_{B_{g(0)}(x_0, 2)} |Ric|^2 d\mu dt \right\}^{\frac{1}{2}}.
  \label{eqn:ricbdl2}
\end{align}
Then we can follow the calculation from inequality (37) to inequality (39) of~\cite{Wa2} to obtain 
\begin{align*}
   \int_{t_1}^{t_2} \int_{B_{g(0)}(x_0, 2)} 2|Ric|^2 d\mu dt  \leq C\left\{ \left.  \int_W |R| d\mu \right|_{t_2} + \left.  \int_W |R| d\mu \right|_{t_1} + \int_{t_1}^{t_2} \int_W |R| d\mu dt \right\},
\end{align*}
where $W=B_{g(0)}(x_0, 3)$. 
Recall that $[-2, 2] \subset [t_1, t_2] \subset [-3, 3]$.
Plugging (\ref{eqn:MB03_1}) into the above inequality, we have
\begin{align*}
   \int_{-2}^{2} \int_{B_{g(0)}(x_0, 2)} 2|Ric|^2 d\mu dt \leq \int_{t_1}^{t_2} \int_{B_{g(0)}(x_0, 2)} 2|Ric|^2 d\mu dt \leq CS. 
\end{align*}
Then (\ref{eqn:MB04_1}) follows from  (\ref{eqn:ricbdl2}) and the above inequality. 
\end{proof}

\begin{remark}
The Ricci curvature estimate (\ref{eqn:MA18_3}) is the so called estimate of the type $|Ric| \leq \sqrt{|Rm||R|}$ with $|Rm|$ being regarded as $\frac{1}{r}$ for some regularity scale $r$. 
 Such notation was already used in Chen-Wang~\cite{CW6}. For example, the comments in section 5.1 of Chen-Wang~\cite{CW6}, after the Minkowski-dimension estimate of the singular set.
 This estimate originates from Wang~\cite{Wa2} and was further developed and applied in Tian-Wang~\cite{TW} and Chen-Wang~\cite{CW6}. 
 \label{rmk:MD13_1}
\end{remark}

\begin{figure}
 \begin{center}
 \psfrag{A}[c][c]{$t$}
 \psfrag{B}[c][c]{$t_2$}
 \psfrag{C}[c][c]{$t_1$}
 \psfrag{D}[c][c]{$x_0$} 
 \psfrag{E}[c][c]{Brown: $B_{g(t_2)}(x_0, \frac{r}{4K})$}
 \psfrag{F}[c][c]{Dotted blue: $B_{g(t_1)}(x_0, \frac{r}{2K})$}
 \psfrag{G}[c][c]{Black: $B_{g(t_2)}(x_0, \frac{r}{K})$}
 \psfrag{H}[c][c]{\color{green}Uniformly regular}
 \includegraphics[width=0.5 \columnwidth]{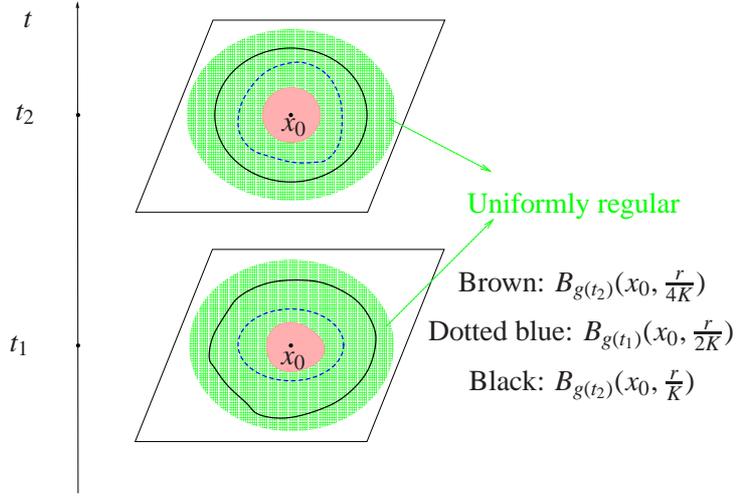}
 \caption{Precise estimate of balls on uniformly regular domains}
 \label{fig:regularcenter}
 \end{center}
 \end{figure}

\begin{lemma}[\textbf{Long-time precise distance estimate---weak version}]
There is a big constant $C=C(m,\kappa)$ with the following properties.

Suppose $\mathbf{vr}(x_0, t) \geq r$ for each $t \in [-T+1, T-1]$, then we have 
\begin{align}
   B_{g(t_2)} \left(x_0, \frac{r}{4K} \right) \subset B_{g(t_1)} \left(x_0, \frac{r}{2K} \right) \subset B_{g(t_2)}\left(x_0, \frac{r}{K} \right), \quad \forall \; t_1, t_2 \in [-T+1, T-1], 
\label{eqn:MB04_7}   
\end{align}
whenever 
\begin{align}
 S<\frac{r^2}{CT^2}.   \label{eqn:MB04_8}
\end{align}

\label{lma:MA18_2}
\end{lemma}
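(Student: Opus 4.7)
The strong hypothesis that $\mathbf{vr}(x_0, t) \geq r$ at \emph{every} time $t \in [-T+1, T-1]$ lets us apply Lemma~\ref{lma:MA18_1} with center $t^* = t$ at each such slice, yielding
\begin{align*}
   |Ric|(x,t) \leq Cr^{-1}\sqrt{S}, \qquad x \in B_{g(t)}(x_0, cr),
\end{align*}
where $c = c(m,\kappa)$ is the small constant from Lemma~\ref{lma:MA18_1}. After possibly enlarging the absolute constant $K$, we may assume $r/K \ll cr$, so the scales of interest sit well inside this Ricci-controlled region at every time. Thus the proof reduces to a careful metric distortion estimate on the time-independent spatial scale $cr$.

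The key step is length comparison along fixed curves. Fix a target point $y$ and set $f(t) := d_{g(t)}(x_0, y)$. Whenever $f(t) < cr/2$, any $g(t)$-minimizing geodesic from $x_0$ to $y$ lies entirely in $B_{g(t)}(x_0, cr)$. Since $\partial_t g = -2 Ric$ with $|Ric| \leq Cr^{-1}\sqrt{S}$ on this ball, the $g$-length of the fixed minimizing curve distorts at rate at most $Cr^{-1}\sqrt{S}$; taking the minimum over such curves at nearby times then yields
\begin{align*}
   e^{-Cr^{-1}\sqrt{S}\,|t-s|}\, f(s) \;\leq\; f(t) \;\leq\; e^{Cr^{-1}\sqrt{S}\,|t-s|}\, f(s),
\end{align*}
as long as $f$ has stayed below $cr/2$ on the relevant subinterval. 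Since $|t_1-t_2| \leq 2T$, the total distortion factor is at most $e^{2Cr^{-1}\sqrt{S}\,T}$; choosing $C$ in the hypothesis $S < r^2/(CT^2)$ large enough, this factor is bounded by $2$.

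To conclude the first inclusion, take any $y \in B_{g(t_2)}(x_0, r/(4K))$, so $f(t_2) \leq r/(4K)$. A short continuity/bootstrap argument rules out $f$ ever reaching $cr/2$ on $[-T+1, T-1]$: at any first would-be exit time $t^*$, the local estimate applied on $[t_2, t^*]$ would already give $f(t^*) \leq 2 f(t_2) \leq r/(2K) \ll cr/2$, a contradiction. Hence $f$ stays in the safe region throughout, so in particular $f(t_1) \leq 2 f(t_2) \leq r/(2K)$, which is exactly $B_{g(t_2)}(x_0, r/(4K)) \subset B_{g(t_1)}(x_0, r/(2K))$. The second inclusion $B_{g(t_1)}(x_0, r/(2K)) \subset B_{g(t_2)}(x_0, r/K)$ follows symmetrically, applied to any $y$ in the left-hand ball.

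The main technical point is executing this bootstrap cleanly, i.e., ensuring the minimizing geodesic stays inside the Ricci-controlled region throughout $[t_1, t_2]$. The argument closes because the hypothesis $S < r^2/(CT^2)$ is precisely engineered so that the total Ricci exposure $Cr^{-1}\sqrt{S}\,T$ is $O(1)$, keeping the distortion factor below $2$ with room to spare. The mild irregularity of $f(t)$ at cut points is sidestepped entirely by working with length comparisons along fixed curves, rather than with $\tfrac{d}{dt} f$ directly.
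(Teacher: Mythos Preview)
Your proof is correct and follows essentially the same approach as the paper: both arguments invoke the Ricci bound $|Ric|\leq Cr^{-1}\sqrt{S}$ from Lemma~\ref{lma:MA18_1} on the controlled ball around $x_0$, track the length of a fixed geodesic under the flow, and use a first-exit-time/continuity argument to ensure the curve never leaves the controlled region, with the hypothesis $S<r^2/(CT^2)$ calibrated so that the total distortion factor stays below~$2$. The paper phrases the continuity step as a contradiction at the first time $s_0$ where $\partial B_{g(t_1)}(x_0,r/(2K))$ touches $\partial B_{g(s_0)}(x_0,r/K)$, while you phrase it as a bootstrap on $f(t)=d_{g(t)}(x_0,y)$; these are the same argument in different clothing.
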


\begin{proof}
 Without loss of generality, we can assume $t_1<t_2$.  We shall show the second part of (\ref{eqn:MB04_7}). The proof of the first part of
 (\ref{eqn:MB04_7}) is identical and we leave it as an excercise for interested readers.  To be more precise, we want to prove
 \begin{align}
   B_{g(t_1)} \left(x_0, \frac{r}{2K} \right) \subset B_{g(t_2)}\left(x_0, \frac{r}{K} \right).  \label{eqn:MB04_4}
 \end{align}
 Suppose (\ref{eqn:MB04_4}) was wrong, then there exists $s_0 \in (t_1, t_2)$ such that
  \begin{align}
    &B_{g(t_1)} \left(x_0, \frac{r}{2K} \right) \subset B_{g(t)}\left(x_0, \frac{r}{K} \right), \quad \forall \; t \in (t_1, s_0), \label{eqn:MB04_5} \\
    & \partial B_{g(t_1)} \left(x_0, \frac{r}{2K} \right) \cap \partial B_{g(s_0)} \left(x_0, \frac{r}{K} \right) \neq \emptyset.   \label{eqn:MB04_6}
 \end{align}
 At time $s_0$, we can find a point $y_0 \in \partial B_{g(t_1)}(x_0, \frac{r}{2K}) \cap \partial B_{g(s_0)}(x_0, \frac{r}{K})$. 
 Let $\gamma$ be a shortest geodesic connecting $x_0$ and $y_0$, under the metric $g(t_1)$.  Recall that $\mathbf{vr}(x_0, t) \geq r$
 for each $t \in [-T+1, T_1]$.  Then the combination of (\ref{eqn:MB04_5}) and the Ricci curvature estimate (\ref{eqn:MA18_3}) implies that
 \begin{align*}
   |Ric|(x,t) < Cr^{-1} \sqrt{S}, \quad \forall \; x \in \gamma, \; t \in (t_1, s_0). 
 \end{align*}
 Since metrics evolove by $-2Ric$, we can estimate the length of $\gamma$ under the metric $g(s_0)$ by
 \begin{align*}
   |\gamma|_{g(s_0)} \leq e^{Cr^{-1} \sqrt{S} (s_0-t_1)} |\gamma|_{g(t_1)}= e^{Cr^{-1} \sqrt{S} (s_0-t_1)} \cdot \frac{r}{2K}. 
 \end{align*}
 On the other hand, under the metric $g(s_0)$, $\gamma$ is a curve connecting $x_0$ to a point $y_0 \in \partial B_{g(s_0)}(x_0, \frac{r}{K})$, by (\ref{eqn:MB04_6}).  
 This means that
 \begin{align*}
 |\gamma|_{g(s_0)} \geq \frac{r}{K}. 
 \end{align*}
 Combining the previous two inequalities, we have
 \begin{align*}
   Cr^{-1} \sqrt{S} (s_0-t_1) \geq \log 2,    \quad \Rightarrow \quad S \geq \frac{1}{C(s_0-t_1)^2} r^2 \geq \frac{1}{CT^2} r^2,  
 \end{align*}
 which contradicts (\ref{eqn:MB04_8}).  Therefore, (\ref{eqn:MB04_4}) holds for each $t_2 \in (t_1, T-1)$. 
\end{proof}

For static Ricci flow, the volume ratio of a fixed sized geodesic ball does not move along time.  
In the following, we shall show an ``almost" version of this property, under the condition that the scalar curvature is very small and
the center point is uniformly regular.  The key is the decomposition of space under the assumption (\ref{eqn:MA16_2}) and the delicate application of 
curvature estimates in Lemma~\ref{lma:MA18_1}.

\begin{proposition}[\textbf{Gromov-type volume estimate}]
There is an $\epsilon=\epsilon(m,\kappa,r,T_0)$ with the following properties.

Suppose $x_0 \in \mathcal{F}_{\frac{r}{K}}(M, t)$ for each $t \in [t_1,t_2] \subset [t_1, t_1+T_0] \subset [-T+1, T-1]$, $Kr<<1$.
If $x_0 \in \mathcal{F}_{Kr}(M, t_1)$, then we have
\begin{align}
    \frac{|B_{g(t_2)}(x_0, r)|_{g(t_2)} }{\left|B_{g(t_1)}\left(x_0, e^{-\frac{\delta_0}{100m}}r \right) \right|_{g(t_1)}} \geq e^{-\frac{\delta_0}{100}}, \label{eqn:MB04_9}
\end{align}
whenever $S<\epsilon$.     Similarly, if $x_0 \in \mathcal{F}_{Kr}(M, t_2)$, then we have
\begin{align}    
    \frac{|B_{g(t_1)}(x_0, r)|_{g(t_1)} }{\left|B_{g(t_2)}\left(x_0, e^{-\frac{\delta_0}{100m}}r \right) \right|_{g(t_2)}}  \geq e^{-\frac{\delta_0}{100}}, \label{eqn:MB04_10}
\end{align}
whenever $S<\epsilon$.  
\label{prn:MA18_1}
\end{proposition}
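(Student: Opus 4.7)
The plan is to fix a single subset $U = B_{g(t_1)}(x_0, r'') \subset M$ where $r'' = e^{-\delta_0/(100m)} r$, viewed independently of the time-varying metric, and to establish (\ref{eqn:MB04_9}) through the chain
\[
   |B_{g(t_2)}(x_0, r)|_{g(t_2)} \;\geq\; |U|_{g(t_2)} \;\geq\; e^{-\delta_0/100}\,|U|_{g(t_1)}.
\]
The right-hand inequality is the volume-evolution part, which is routine: since $\frac{d}{dt} dV_{g(t)} = -R\,dV_{g(t)}$ and $|R|\leq S$, we obtain $|U|_{g(t_2)} \geq e^{-ST_0}|U|_{g(t_1)}$, so it suffices to take $\epsilon \leq \delta_0/(100 T_0)$ for this part.

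The left-hand inequality is the set containment $U \subset B_{g(t_2)}(x_0, r)$, which is the real content and is where the strong hypothesis $\mathbf{vr}(x_0, t_1) \geq Kr$ at one time enters. I would argue by contradiction in the spirit of Lemma~\ref{lma:MA18_2}: assume containment fails, let $s_0 \in (t_1, t_2]$ be the first time it fails, and pick $y_0 \in U$ with $d_{g(s_0)}(x_0, y_0) = r$. Let $\gamma$ be the $g(t_1)$-minimizing geodesic from $x_0$ to $y_0$, so $L_{g(t_1)}(\gamma) \leq r'' < r$. At $t_1$ the canonical-radius property (Definition~\ref{dfn:SC02_1}(2)) applied at scale $Kr$ yields $|Rm| \leq 4c_a^{-2}(Kr)^{-2}$ on $B_{g(t_1)}(x_0, c_a Kr/2)$, a region comfortably containing $\gamma$ since $r < c_a Kr/2$; two-sided pseudo-locality ($\mathbf{scr} \geq 1$) propagates this bound forward on the same spatial ball for time $\sim (Kr)^2$. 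Lemma~\ref{lma:MA18_1} then delivers $|Ric| \leq C(Kr)^{-1}\sqrt{S}$ on $\gamma$, so the length evolution $\frac{d}{dt}L_{g(t)}(\gamma) \leq \sup_\gamma|Ric|\cdot L_{g(t)}(\gamma)$ integrates to
\[
   L_{g(t)}(\gamma) \;\leq\; r'' \exp\!\bigl(C(Kr)^{-1}\sqrt{S}\,(t-t_1)\bigr).
\]
For $\epsilon$ sufficiently small (as a function of $r$, $T_0$, $K$), the right side is bounded by $r = r''\,e^{\delta_0/(100m)}$, contradicting $L_{g(s_0)}(\gamma) \geq d_{g(s_0)}(x_0,y_0)=r$.

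The main technical obstacle is that the pseudo-locality window $c_a^2(Kr)^2/4$ may be smaller than $T_0$, in which case the strong regularity at $t_1$ does not automatically propagate all the way to $t_2$. In that regime I would iterate the above argument: partition $[t_1, t_2]$ into $O(T_0 K^2/r^2)$ sub-intervals of length $\sim (r/K)^2$, and at each step exploit the uniform hypothesis $\mathbf{vr}(x_0, t) \geq r/K$ together with Lemma~\ref{lma:MA18_2} applied at scale $r/K$, which gives precise ball comparisons at scale $r/K^2$; composing these with the almost-Euclidean structure at $t_1$ lets one decompose $\gamma$ into short pieces each tracked across one time step. Taking $\epsilon$ small enough in $m,\kappa,r,T_0$ keeps the cumulative length distortion below $e^{\delta_0/(100m)}$, and the contradiction closes. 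The symmetric estimate (\ref{eqn:MB04_10}) follows by reversing the roles of $t_1$ and $t_2$ and running the same contradiction backwards in time, using forward rather than backward pseudo-locality.
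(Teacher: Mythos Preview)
Your short-time argument (when $T_0 \leq c_a^2(Kr)^2/4$) is correct and is essentially a reprise of Lemma~\ref{lma:MA18_2}.  The gap is in the long-time iteration.

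The hypothesis $\mathbf{vr}(x_0,t)\geq r/K$ at each time $t$ gives you curvature control only on the small ball $B_{g(t)}\!\left(x_0,\tfrac{c_a r}{2K}\right)$, of radius $\sim r/K$.  Your geodesic $\gamma$, however, has length $\sim r$, so beyond the initial pseudo-locality window it passes through points $y$ at distance $\sim r \gg r/K$ from $x_0$, and for such $y$ you have \emph{no} a priori lower bound on $\mathbf{vr}(y,t)$.  Decomposing $\gamma$ into short pieces does not help: Lemma~\ref{lma:MA18_2} compares balls centered at $x_0$, not at the intermediate points of $\gamma$, and the ``almost-Euclidean structure at $t_1$'' does not propagate to time $t_k$ away from $x_0$.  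So the Ricci bound $|Ric|\leq C(Kr)^{-1}\sqrt{S}$ along all of $\gamma$ simply fails for $t>t_1+O((Kr)^2)$, and the length-evolution inequality is unavailable.  In short, you cannot control \emph{every} geodesic from $x_0$; some of them may pass through genuinely near-singular regions at intermediate times.

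The paper's proof resolves exactly this difficulty by abandoning the goal of a set containment $U\subset B_{g(t_2)}(x_0,r)$ and instead controlling \emph{most} geodesics.  One introduces an auxiliary small scale $\xi\ll r/K$, partitions $[t_1,t_2]$ into $\sim T_0/\xi^2$ steps, and defines a bad set $\mathbf{B}\subset B_{g(t_1)}(x_0,r)$ consisting of points whose time-line ever drops below regularity scale $K\xi$ at one of these steps.  The density estimate (item 3 of Definition~\ref{dfn:SC02_1}) together with Corollary~\ref{cly:MA15_1} bounds $|\partial\mathbf{B}|_{\mathcal{H}^{m-1}}\leq C T_0\,\xi^{2p_0-3}$, which is small because $2p_0>3$.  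Bishop--Gromov comparison (with the Ricci bound near $x_0$) then shows that the set of directions $\theta\in S^{m-1}$ whose geodesic $\gamma_\theta$ hits $\mathbf{B}$ has small $(m-1)$-measure.  For the remaining directions, $\gamma_\theta$ stays in $\mathcal{F}_\xi$ for all $t\in[t_1,t_2]$, so $|Ric|\leq C\xi^{-1}\sqrt{S}$ along it and the length distortion is at most $e^{CT_0\xi^{-1}\sqrt{S}}$.  Choosing first $\xi$ small (to make the bad directions negligible) and then $S$ small (to make the distortion negligible) yields~(\ref{eqn:MB04_9}).  The high-codimension estimate $2p_0>3$ is the crucial input you are missing; see Remark~\ref{rmk:GC23_2}.
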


\begin{proof}
We shall first prove (\ref{eqn:MB04_9}).  Without loss of generality, we assume $t_1=0$.

Recall $K$ is a universal large constant such that whenever $x \in \mathcal{F}_r$, we have $|Rm| \leq K^2r^{-2}$, injectivity  radius bounded from below by $K^{-1}r$ in $B(x, K^{-1}r)$ and the volume ratio of the ball
$B(x, K^{-1}r)$ is at least $\omega_m \left(1-\frac{\delta_0}{100}\right)$.  Furthermore, by adjusting $K$ if necessary, we can also make the following property holds, due to the two-sided pseudo-locality assumption(c.f. Definition~\ref{dfn:MA13_1}). 
If $x \in \mathcal{F}_{Kr}(M, t)$, then $x \in \mathcal{F}_{r}(M, s)$ for each $0 \leq s \leq r^2$.  

Fix $\xi <<\frac{r}{K}$.  We shall estimate the set whose time-line $\{(x,t)| 0=t_1 \leq t \leq t_2\}$ avoids the very singular part.
 For this purpose,  we decompose the time period into $N=\frac{t_2-t_1}{\xi^2}$ equal parts with each part has length $\xi$.  Note that by adjusting $\xi$ a little bit, this can always be done. 
 For each integer $j \in \{0, 1, 2, \cdots, N\}$, we define a set $\mathbf{B}_j \triangleq \{x \in M | \mathbf{vcr}(x, j\xi^2)<K\xi\}$ at time slice $t=j\xi^2$.
 If  $y \in \mathcal{F}_{K\xi}(j\xi^2)$,  then $y \in \mathcal{F}_{\xi}(j\xi^2+s)$ for each $0 \leq s \leq \xi^2$.
 In other words, regularity scale cannot drop too quickly. 
  Then we define a ``bad subset"
  \begin{align*}
     \mathbf{B} \triangleq \left\{ \cup_{j=0}^{N} \mathbf{B}_j  \right\} \cap B_{g(0)}(x_0, r)= \cup_{j=0}^{N }  \left\{ \mathbf{B}_j \cap B_{g(0)}(x_0, r)\right\}. 
  \end{align*}
  Since $\xi <<K^{-1}r$, according to assumption, $x_0 \notin \mathbf{B}$. 
  If $\partial \mathbf{B} \cap B_{g(0)}(x_0,r)=\emptyset$, then we shall obtain the desired result directly.  
  Therefore, we assume $\partial \mathbf{B} \cap B_{g(0)}(x_0,r) \neq \emptyset$. 
  We observe that  $\partial \mathbf{B} \cap B_{g(0)}(x_0,r)$ cannot be too far away from $x_0$ under the metric $g(t)$. 
  Actually, let $y \in \partial \mathbf{B} \cap B_{g(0)}(x_0, r)$, then we have
  \begin{align*}
     y \in \mathcal{F}_{\xi}(t), \quad \forall \; t \in [0, t_2].  
  \end{align*}
  By Lemma~\ref{lma:MA18_1}, we have Ricci curvature estimate of $(y,t)$. 
  Note that $x_0 \in \mathcal{F}_{\frac{r}{K}}(M,t)$ for each $t \in [0, t_2]$. 
  Since $y \in B_{g(0)}(x_0, r) \cap \partial \mathbf{B}$, it is clear that $d_{g(0)}(x_0, y)<r$. 
  Therefore, Lemma~\ref{lma:MA13_2} applies and we have
  \begin{align}
     d_{g(t)}(y, x_0)< Ce^{\frac{t}{\epsilon_d}} \left\{ d_{g(0)}(y, x_0) +1 \right\}< 2C e^{\frac{t_2}{\epsilon_d}} \leq 2C e^{\frac{T_0}{\epsilon_d}}
  \label{eqn:MA14_1}   
  \end{align}
 By (\ref{eqn:MA18_3}) and the fact that $S$ is very small, we have
  \begin{align*}
      \int_0^{t} |Ric|(y,s)ds < C\xi^{-1} \sqrt{S} t \leq C\xi^{-1} \sqrt{S}T_0< \log 2. 
  \end{align*}
  Applying evolution equation of the Ricci flow, the above inequality implies the metric equivalence
  \begin{align}
    0.5 g(y,0) \leq e^{-CT_0 \xi^{-1} \sqrt{S}} g(y,0)  \leq  g(y, t)  \leq  e^{CT_0 \xi^{-1} \sqrt{S}} g(y, 0) \leq 2 g(y, 0),   
  \label{eqn:MA14_2}  
  \end{align}
 for $y \in \overline{B_{g(0)}(x_0,r) \backslash \mathbf{B}}$ and $t \in [0, t_2]$. 
  
  Note that $\partial \mathbf{B} \subset \cup_{j=0}^{N} \partial \mathbf{B}_j$ is $(m-1)$-dimensional, by perturbing $\xi$ if necessary. 
  We continue to estimate the $(m-1)$-dimensional Hausdorff measure of $\partial \mathbf{B}$.  Note that 
  \begin{align*}
     |\partial \mathbf{B}|_{g(0)} \leq 2^{m-1} \sum_{j=0}^{N}  \left| \partial \mathbf{B}_j  \cap B_{g(j\xi^2)}(x_0, \rho) \right|_{g(j\xi^2)},
  \end{align*} 
  where $\rho= 2C e^{\frac{T_0}{\epsilon_d}}$, as in inequality (\ref{eqn:MA14_1}).   Applying Corollary~\ref{cly:MA15_1},  we obtain for each $j$ the estimate
  \begin{align*}
     \left| \partial \mathbf{B}_j  \cap B_{g(j\xi)}(x_0, \rho) \right|_{g(j\xi)} < C \xi^{2p_0-1}.
  \end{align*} 
  Summing all the $j$-terms together and recalling $N=\frac{t_2}{\xi^2}$, we obtain
  \begin{align}
    |\partial \mathbf{B}|_{g(0)}< Ct_2 \xi^{2p_0-3} \leq CT_0 \xi^{2p_0-3}.
  \label{eqn:MA16_1}  
  \end{align}  
  According to the conditions of this proposition, $x_0 \in \mathcal{F}_{\frac{r}{K}}(M, t)$ for all $t \in [0, T_0]$. 
  By Lemma~\ref{lma:MA18_2}, we have
  \begin{align*}
     B_{g(0)}\left(x_0, \frac{r}{2K^2} \right) \cap \mathbf{B} =\emptyset,
  \end{align*}
  whenever $S$ is very small. Note that in the ball $B_{g(0)}(x_0, K^{-2}r)$, we have curvature bound $|Rm| \leq K^4 r^{-2}$.  
  Let $\Omega_{\xi}$ be the subset of the unit sphere of $T_{x_0} M$ (with respect to the metric $g(0)$) such that the unit speed geodesics coming out of
  $\Omega_{\xi}$ hit some points in $\mathbf{B}$ before it escape $B_{g(0)}(x_0, r)$. 
  Recall that by Ricci curvature estimate (\ref{eqn:MA18_3}), we have $|Ric|<Cr^{-1} \sqrt{S}<\frac{1}{100}$ since $S$ is chosen very small. 
  By Gromov-Bishop comparison,  weighted area element is almost non-increasing along each shortest geodesic.  It follows that
  \begin{align}
     |\Omega_{\xi}|_{\mathcal{H}^{m-1}}  \leq C \left( \frac{\xi}{r}\right)^{2p_0-3}.       \label{eqn:MA15_2}
  \end{align}
  Therefore, for every direction $\theta \in S^{m-1} \backslash \Omega_{\xi}$, the geodesic (at time $t=0$) starting from $\theta$ will continue to 
  $\partial B_{g(0)}(x_0, r)$ without hitting $\mathbf{B}$.  In other words, if we denote the unit speed geodesic starting from $\theta$ by $\gamma_{\theta}$, then we have
  $\gamma_{\theta}(s) \notin \mathbf{B}$ for each $s \in [0, r]$.   By the precise metric equivalence estimate (\ref{eqn:MA14_2}), we have
  \begin{align*}
    |\gamma_{\theta}|_{g(t_2)} \leq e^{Ct_2 \xi^{-1} \sqrt{S}} |\gamma_{\theta}|_{g(0)} \leq e^{CT_0 \xi^{-1} \sqrt{S}} |\gamma_{\theta}|_{g(0)}=e^{CT_0 \xi^{-1} \sqrt{S}}L. 
  \end{align*}
  Define a set $D \triangleq \left\{x \in B_{g(0)}(x_0,r) \left| x=\gamma_{\theta}(s) \; \textrm{for some} \; \theta \in S^{m-1} \backslash \Omega_{\xi}, \; s \in \left[0,e^{-CT_0 \xi^{-1} S}r \right]  \right.\right\}$. Then we have
  \begin{align}
     D \subset B_{g(T_0)} \left( x_0,  r \right).   \label{eqn:MB04_11}
  \end{align}
  Recall that $t_1=0$ and we have $x_0 \in \mathcal{F}_{Kr}(M, 0)$.  It follows from the fact $Kr<<1$, $\mathbf{scr} \geq 1$ and the improving regularity property in Definition~\ref{dfn:SC02_1} that we have curvature bound
  \begin{align*}
      |Rm|(x,0) \leq  r^{-2}, \quad \forall \; x \in B_{g(0)}(x_0, r). 
  \end{align*}
  By (\ref{eqn:MA15_2}) and the above curvature bound at time $t=0$, we see that 
  \begin{align}
    |D|_{g(0)} \geq \left|B_{g(0)} \left(x_0, e^{-CT_0 \xi^{-1} \sqrt{S}}r \right) \right|_{g(0)}- Cr^{3-2p_0}\xi^{2p_0-3}.     \label{eqn:MB04_12}
  \end{align}
 Since the volume element evolves by $-R$,  it follows from $|R|\leq S$ that
  \begin{align}
    |D|_{g(t_2)} \geq e^{-CT_0 \xi^{-1}S} |D|_{g(0)}.    \label{eqn:MB04_13}
  \end{align}
 Combining (\ref{eqn:MB04_11}), (\ref{eqn:MB04_12}) and (\ref{eqn:MB04_13}), we obtain
  \begin{align}
    \left|  B_{g(t_2)} \left( x_0,  r \right) \right|_{g(t_2)} &\geq |D|_{g(t_2)} \geq e^{-CT_0 \xi^{-1}S} |D|_{g(0)} \notag\\
    &\geq e^{-CT_0 \xi^{-1}S} \left\{ \left|B_{g(0)} \left(x_0, e^{-CT_0 \xi^{-1} \sqrt{S}}r \right) \right|_{g(0)}- Cr^{3-2p_0}\xi^{2p_0-3} \right\}.
  \label{eqn:MB04_14}  
  \end{align}
  We can choose $\xi$ small such that $Cr^{3-m-2p_0}\xi^{2p_0-3} << \frac{\delta_0^m}{1000^m}$, then choose $S$ very small such that
  $CT_0 \xi^{-1} \sqrt{S} <<\frac{\delta_0}{100m}$.  
  Note that
  \begin{align}
    &\quad \left|B_{g(0)} \left(x_0, e^{-CT_0 \xi^{-1} \sqrt{S}}r \right) \right|_{g(0)} -Cr^{3-2p_0}\xi^{2p_0-3} \notag\\
    &=\left|B_{g(0)} \left(x_0, e^{-\frac{\delta_0}{100m}}r \right) \right|_{g(0)} 
    + \left| B_{g(0)} \left(x_0, e^{-CT_0 \xi^{-1} \sqrt{S}}r \right) \backslash B_{g(0)} \left(x_0, e^{-\frac{\delta_0}{100m}}r \right) \right|_{g(0)}-Cr^{3-2p_0}\xi^{2p_0-3} \notag\\
    &\geq \left|B_{g(0)} \left(x_0, e^{-\frac{\delta_0}{100m}}r \right) \right|_{g(0)}.    \label{eqn:MB04_15}
  \end{align}
  Let us explain why the last inequality in (\ref{eqn:MB04_15}) holds. Actually,  since $Cr^{3-m-2p_0}\xi^{2p_0-3} << \frac{\delta_0^m}{1000^m}<\frac{\delta_0}{1000m}$, 
  we know the set $B_{g(0)} \left(x_0, e^{-CT_0 \xi^{-1} \sqrt{S}}r \right)  \backslash B_{g(0)} \left(x_0, e^{-\frac{\delta_0}{100}}r \right)$ contains a geodesic
  ball of radius $\frac{\delta_0}{1000m}$, whose volume is at least $\kappa \left( \frac{\delta_0 r}{1000m}\right)^m$.
  Therefore, the sum of last two terms in the line above (\ref{eqn:MB04_15}) is positive. 
  Consequently, (\ref{eqn:MB04_9}) follows from the combination of (\ref{eqn:MB04_14}) and (\ref{eqn:MB04_15}).
  
  The proof of (\ref{eqn:MB04_10}) is almost the same as (\ref{eqn:MB04_9}). For this reason, we only sktech the basic steps.
  We define a ``bad set" $\tilde{\mathbf{B}}$ at time $t_2$ as the ``projection" of the bad sets $\tilde{\mathbf{B}}_j$'s, depending on the ``step length" $\xi$.
  Applying Lemma~\ref{lma:MA13_2} backwardly, we obtain $d_{g(t_2)}(x_0, \partial \tilde{\mathbf{B}})$ is uniformly bounded. 
  Due to the density estimate in Corollary~\ref{cly:MA15_1}, we then obtain that the ``projection" of $\partial \tilde{\mathbf{B}}$ onto $B_{g(t_2)}(x_0,1)$ is
  an $(m-1)$-dimensional set whose $(m-1)$-dimensional Hausdorff measure is bounded by $Cr^{3-2p_0}\xi^{2p_0-3}$. 
  According to Bishop-Gromov comparison, this means that ``most" shortest geodesics starting from $x_0$ at time $t_2$ will be away from the ``bad set" for each $t \in [t_1, t_2]$.
  Such geodesics can be detected at time $t_1$ with length almost not changed and consequently are contained in the almost unit geodesic ball at time $t_1$. 
  Same as the steps from (\ref{eqn:MA15_2}) to (\ref{eqn:MB04_15}), we obtain that the volume ratio of unit geodesic ball at time $t_1$ is at least almost the same as
  the volume ratio of unit geodesic ball at time $t_2$. To be more precise, we have (\ref{eqn:MB04_10}) holds. 
   
\end{proof}

\begin{remark}[\textbf{Why high codimension is important}]
From the proof of Proposition~\ref{prn:MA18_1}, in particular inequality (\ref{eqn:MA16_1}), the condition $p_0>1.5$ is essentially used($p_0>1$ is good with further efforts).
Therefore, the singular set in each conifold has high Minkowski codimension is of essential importance. 
However, in the K\"ahler case, the codimension-four is automatic.  
In the real case, it holds due to the work of Cheeger-Naber~\cite{CN2}. 
\label{rmk:GC23_2}
\end{remark}

\begin{proposition}[\textbf{Weak long-time two-sided pseudo-locality}]
There is an $\epsilon=\epsilon(m,\kappa,r,T_0)$ with the following properties.

Suppose $\mathcal{M}$ is a Ricci flow solution satisfying (\ref{eqn:GC21_7}) and  (\ref{eqn:MA16_2}). 
Suppose $0<T_0<T-1$.  Then we have
 \begin{align}
    \mathcal{F}_{Kr}(M, 0) \subset \bigcap_{-T_0 \leq t \leq T_0}  \mathcal{F}_{\frac{r}{K}}(M, t)   \label{eqn:GC21_9}
 \end{align}
 whenever $S<\epsilon$.
 \label{prn:GC21_10}
\end{proposition}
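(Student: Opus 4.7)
The plan is to establish (\ref{eqn:GC21_9}) by a continuity argument driven by the Gromov-type volume estimate in Proposition~\ref{prn:MA18_1}. Fix $x_0\in\mathcal{F}_{Kr}(M,0)$ and put
\begin{align*}
I_+ \triangleq \bigl\{\, \tau\in[0,T_0] \,:\, x_0\in\mathcal{F}_{r/K}(M,t) \text{ for every } t\in[0,\tau] \,\bigr\}.
\end{align*}
The short-time two-sided pseudo-locality built into $\mathbf{scr}(M,0)\ge 1$ (Definition~\ref{dfn:MA13_1}) provides an initial subinterval of $[0,T_0]$ lying in $I_+$, and upper semi-continuity of $t\mapsto\mathbf{vr}(x_0,t)$ (which follows from smoothness of the flow together with the supremum definition in Definition~\ref{dfn:SC24_1}) makes $I_+$ closed. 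Thus $I_+=[0,t^\ast]$ for some $t^\ast$; the goal is to show $t^\ast=T_0$. The symmetric argument for $[-T_0,0]$ uses (\ref{eqn:MB04_10}) in place of (\ref{eqn:MB04_9}).

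Suppose for contradiction $t^\ast<T_0$. By maximality, $\mathbf{vr}(x_0,t^\ast)=r/K$ exactly. Apply inequality (\ref{eqn:MB04_9}) of Proposition~\ref{prn:MA18_1} with $t_1=0$, $t_2=t^\ast$, and the proposition's scale parameter set equal to our $r$; both hypotheses are met, since $x_0\in\mathcal{F}_{Kr}(M,0)$ and $x_0\in\mathcal{F}_{r/K}(M,t)$ for every $t\in[0,t^\ast]$. The conclusion reads
\begin{align*}
|B_{g(t^\ast)}(x_0,r)| \;\ge\; e^{-\delta_0/100}\, \bigl| B_{g(0)}\bigl(x_0,\, e^{-\delta_0/(100m)}r\bigr) \bigr|.
\end{align*}
On the right, the combined assumptions $\mathbf{vr}(x_0,0)\ge Kr$ and $\mathbf{cr}(x_0,0)\ge 1$ trigger the regularity estimate of Definition~\ref{dfn:SC02_1}, yielding $|Rm|_{g(0)} \le C(Kr)^{-2}$ on a neighborhood of $x_0$ of scale comparable to $Kr$. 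Bishop-Gromov comparison at the much smaller scale $e^{-\delta_0/(100m)}r$ then gives the near-Euclidean volume ratio
\begin{align*}
\omega_m^{-1} \bigl(e^{-\delta_0/(100m)}r\bigr)^{-m}\, \bigl|B_{g(0)}\bigl(x_0, e^{-\delta_0/(100m)}r\bigr)\bigr| \;\ge\; 1 - C/K^2.
\end{align*}
Since $K$ is a universal constant chosen once and for all to be large, we may assume $C/K^2<\delta_0/1000$. Combining the two displays gives $\omega_m^{-1} r^{-m} |B_{g(t^\ast)}(x_0,r)| \ge (1-\delta_0/1000)\,e^{-\delta_0/50} > 1-\delta_0$ for $\delta_0$ small, so $\mathbf{vr}(x_0,t^\ast)\ge r>r/K$, contradicting $\mathbf{vr}(x_0,t^\ast)=r/K$. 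One chooses $\epsilon$ to be the minimum of the threshold supplied by Proposition~\ref{prn:MA18_1} and any additional smallness needed for the above arithmetic; this $\epsilon$ depends only on $m,\kappa,r,T_0$.

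The main obstacle is bridging the gap between the large scale $Kr$, where $\mathbf{vr}(x_0,0)\ge Kr$ supplies volume-ratio information, and the strictly smaller scale $e^{-\delta_0/(100m)}r$ appearing on the right-hand side of Proposition~\ref{prn:MA18_1}. The knowledge of one volume ratio at a single scale does not automatically propagate to smaller scales; the bridge is the regularity estimate packaged into $\mathbf{cr}\ge 1$, which converts large-scale non-collapsing into pointwise curvature bounds and therefore permits the Bishop-Gromov step. The universal factor $K$ exists precisely to open this scale gap so that the error $C/K^2$ is dominated by $\delta_0$, which keeps the cumulative constant $(1-\delta_0/1000)\,e^{-\delta_0/50}$ strictly above $1-\delta_0$ and closes the contradiction.
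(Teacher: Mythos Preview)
Your proof is correct and follows essentially the same approach as the paper's: both run a continuity argument to locate the maximal time at which $x_0$ remains in $\mathcal{F}_{r/K}$, apply the Gromov-type volume estimate (\ref{eqn:MB04_9}) (and (\ref{eqn:MB04_10}) for the backward direction) at that endpoint, and then use the regularity on scale $Kr$ at $t=0$ to force the volume ratio of $B_{g(t^\ast)}(x_0,r)$ strictly above $1-\delta_0$, contradicting maximality. The paper packages the scale-bridging step by simply quoting the defining property of $K$ (stated at the start of the proof of Proposition~\ref{prn:MA18_1}), whereas you spell out the curvature-and-comparison mechanism; the content is the same. One terminological remark: the \emph{lower} volume bound at scale $e^{-\delta_0/(100m)}r$ is not Bishop--Gromov (which goes the other way) but rather comes from the sectional curvature upper bound together with the injectivity-radius lower bound supplied by the regularity estimate; this does not affect the argument.
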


\begin{proof}
Fix $x_0 \in \mathcal{F}_{Kr}(M, 0)$.
Let $b$ be the last time such that $\displaystyle x_0 \in \bigcap_{-b \leq t \leq b} \mathcal{F}_{\frac{r}{K}}(M, t)$.   It suffices to show that $b \geq T_0$.
 Actually, if $b<T_0$, then by  inequality (\ref{eqn:MB04_9}) in Proposition~\ref{prn:MA18_1}, we have 
 \begin{align*}
   \left|B_{g(b)}\left(x_0, r \right)\right|_{g(b)} \geq e^{-\frac{\delta_0}{100}} \left| B_{g(0)} \left( x_0, e^{-\frac{\delta_0}{100}} r\right)\right|_{g(0)}
   \geq \left( 1-\frac{\delta_0}{50}\right) \omega_m r^m. 
 \end{align*}
 Recall that $\mathbf{vr}(x_0,0)>Kr$. Therefore, we have $|B_{g(0)}(x_0,\rho)|_{g(0)} \geq \left( 1-\frac{\delta_0}{100}\right) \omega_m \rho^m$ for each $\rho \in (0,r)$.
 Consequently, we have
 \begin{align*}
   \omega_m^{-1}r^{-m}\left|B_{g(b)}\left(x_0, r \right)\right|_{g(b)} \geq e^{-\frac{\delta_0}{100}}  \cdot \left( 1-\frac{\delta_0}{100}\right) \cdot e^{-\frac{m\delta_0}{100m}}
   =e^{-\frac{\delta_0}{50}} \cdot \left( 1-\frac{\delta_0}{100}\right)> 1-\frac{\delta_0}{10}. 
 \end{align*}
 Similarly, using inequality (\ref{eqn:MB04_10}) in Proposition~\ref{prn:MA18_1}, we obtain
 \begin{align*}
   \omega_m^{-1}r^{-m}\left|B_{g(-b)}\left(x_0, r \right)\right|_{g(b)} > 1-\frac{\delta_0}{10}. 
 \end{align*}
 It follows that $\mathbf{vr}(x_0, b)>r$ and $\mathbf{vr}(x_0, -b)>r$.
 By continuity of volume of geodesic balls in each Ricci flow, we know $\mathbf{vr}(x_0, t)>r$ for a short time
 period beyond $[-b, b]$.
 This contradicts our assumption that $b$ is the last time that  $\displaystyle x_0 \in \bigcap_{-b \leq t \leq b} \mathcal{F}_{\frac{r}{K}}(M, t)$. 
 Therefore, we have $b \geq T_0$, which means that (\ref{eqn:GC21_9}) holds. 
 \end{proof}

Proposition~\ref{prn:GC21_10} can be regarded as a weak two-sided long-time pseudo-locality.
Here we use ``weak" because
the long-time regularity preserving only holds when the scalar curvature tends to zero. 

\begin{proposition}[\textbf{Limit space-time with static regular part}]
 Suppose $\mathcal{M}_i$  is a sequence of Ricci flows satisfying (\ref{eqn:GC21_7}) and (\ref{eqn:MA16_2}) and
 $\displaystyle S_i=\sup_{M_i \times [-T_i, T_i]} |R| \to 0$ for some $q>0$, $x_i \in M_i$  and
 $\displaystyle \lim_{i \to \infty} \mathbf{vr}(x_i,0)>0$.
 Then by taking subsequence if necessary, we have
 \begin{align}
   (M_i,x_i, g_i(0)) \longright{\hat{C}^{\infty}}  (\bar{M},\bar{x}, \bar{g}).     \label{eqn:MB09_8}
 \end{align}
 Moreover,  we have
 \begin{align}
   (M_i,x_i, g_i(t)) \longright{\hat{C}^{\infty}}  (\bar{M},\bar{x}, \bar{g})      \label{eqn:MB09_9}
 \end{align}
 for every $t \in (-\bar{T}, \bar{T})$, where $\displaystyle \bar{T}=\lim_{i \to \infty} T_i>0$.
 In particular, the limit space does not depend on time.
\label{prn:SL04_1}
\end{proposition}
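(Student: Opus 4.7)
The convergence (\ref{eqn:MB09_8}) at $t=0$ is immediate from Proposition~\ref{prn:MD16_1}: the hypothesis $\liminf_i \mathbf{vr}(x_i,0)>0$ keeps the base points uniformly regular, so a subsequence converges in $\hat{C}^\infty$ to some $(\bar M,\bar x,\bar g)$ with $\bar x\in\mathcal{R}$. The substance of the proposition is to identify, for every $t\in(-\bar T,\bar T)$, the time-$t$ limit with this same $(\bar M,\bar x,\bar g)$. The strategy is first to upgrade the smooth convergence on the regular part to work uniformly in $s\in[-|t|,|t|]$, and then to globally match the two $\hat{C}^\infty$ limits obtained at times $0$ and $t$.

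\textbf{Step 1: time-independence on the regular part.} Fix $t\in(-\bar T,\bar T)$ and let $\phi_i:\Omega_i\to M_i$ be diffeomorphisms realizing the $\hat{C}^\infty$ convergence at $t=0$ from exhausting domains $\Omega_i\Subset\mathcal{R}$. For any $y\in\mathcal R$ there is $r>0$ such that $\mathbf{vr}(\phi_i(y),0)>Kr$ eventually. Because $S_i\to 0$, Proposition~\ref{prn:GC21_10} yields $\phi_i(y)\in\mathcal F_{r/K}(M_i,s)$ for every $s\in[-|t|,|t|]$ and all $i$ large. On this uniformly regular set Lemma~\ref{lma:MA18_1} gives $|\mathrm{Ric}|(g_i(s))\le Cr^{-1}\sqrt{S_i}\to 0$; together with the flow equation $\partial_s g_i=-2\mathrm{Ric}(g_i)$ and the standard Shi-type derivative estimates (available from the uniform $|Rm|$ bound), this forces $\phi_i^*g_i(s)-\phi_i^*g_i(0)\to 0$ in $C^\infty_{\mathrm{loc}}(\mathcal R)$, uniformly for $s\in[-|t|,|t|]$. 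Hence the smooth convergence of time-$0$ slices on $\mathcal R$ upgrades to the \emph{same} smooth convergence of time-$t$ slices on $\mathcal R$.

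\textbf{Step 2 and main obstacle: global identification.} Applying Proposition~\ref{prn:MD16_1} at time $t$ along a further subsequence yields an \emph{a priori} different $\hat{C}^\infty$ limit $(\bar M_t,\bar x,\bar g_t)$, into which Step~1 embeds $(\mathcal R,\bar g)$ isometrically as an open dense set. The delicate point, and the main obstacle, is to rule out extra singularities at time $t$: a priori the singular loci of $g_i(t)$ near $x_i$ could be larger than those of $g_i(0)$, producing a strictly bigger metric completion. This is precisely what our two main estimates forbid: Proposition~\ref{prn:GC21_10} keeps regular points regular throughout $[-|t|,|t|]$, while the Gromov-type volume estimates (\ref{eqn:MB04_9})--(\ref{eqn:MB04_10}) of Proposition~\ref{prn:MA18_1}, together with the distance distortion bound of Lemma~\ref{lma:MA13_2}, force the volume of every geodesic ball around a regular point to be essentially preserved from time $0$ to time $t$. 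Combining these controls produces a bi-Lipschitz correspondence between $\bar M$ and $\bar M_t$ which equals the identity on the dense open set $\mathcal R$; by continuity of the distance function it extends to a global isometry, so $(\bar M_t,\bar x,\bar g_t)=(\bar M,\bar x,\bar g)$, completing the proof.
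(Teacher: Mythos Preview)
Your proof follows essentially the same route as the paper's: both invoke Proposition~\ref{prn:GC21_10} to keep regular points regular across the whole interval $[-|t|,|t|]$, use the Ricci estimate of Lemma~\ref{lma:MA18_1} to freeze the Riemannian metric on $\mathcal{R}$, and then argue that $\bar M$ is determined by $\mathcal{R}$.

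The paper's version of your Step~2 is shorter and cleaner. It simply records that ``the metric structure of $\bar M$ is totally determined by its regular part, due to the high codimension of $\mathcal{S}$ and the rough connectivity of $\mathcal{R}$'' (citing Proposition~4.15 of Chen--Wang~\cite{CW6}); once $(\mathcal{R},\bar g)$ is static, its metric completion $\bar M$ is automatically static. Your detour through the volume estimates of Proposition~\ref{prn:MA18_1} and the distance distortion of Lemma~\ref{lma:MA13_2} is not really needed for this identification, and your phrasing leaves a small logical gap: a ``bi-Lipschitz correspondence which equals the identity on the dense open set $\mathcal{R}$'' does not by itself extend to a global \emph{isometry}. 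What actually closes the argument is the stronger fact that for $x,y\in\mathcal{R}$ the ambient distance $d_{\bar M}(x,y)$ equals the infimum of lengths of curves lying entirely in $\mathcal{R}$ (this is precisely what high codimension of $\mathcal{S}$ plus the $3$-connectivity estimate~(\ref{eqn:HE11_1}) gives), and that infimum depends only on the Riemannian structure of $\mathcal{R}$. With that observation in place your Step~2 goes through, and the two proofs coincide.
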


\begin{proof}
By direct application of Proposition~\ref{prn:GC21_10} and the fact that $\bar{M}$ is the closure of $\mathcal{R}(\bar{M})$, we obtain the convergence limits in (\ref{eqn:MB09_8}) and (\ref{eqn:MB09_9}) are the same.
Therefore, we only need to show $\bar{M} \in \widetilde{\mathscr{KS}}(n,\kappa)$. However, after we obtain the fact that the limit does not depend on time, the proof of $\bar{M} \in \widetilde{\mathscr{KS}}(n,\kappa)$
is exactly the same as that in section 4.3 of Chen-Wang~\cite{CW6}.  We remark that the metric structure of $\bar{M}$ is totally determined by its regular part, due to the high codimension of $\mathcal{S}$ and the rough
connectivity of $\mathcal{R}$. Moreover details can be found in Proposition 4.15 of section 4.3 of Chen-Wang~\cite{CW6}. 
\end{proof}

In Proposition~\ref{prn:SL04_1},
we show that the limit flow exists and is static in the regular part, whenever we have $S \to 0$.
It is possible that the limit points in the singular part $\mathcal{S}$ are moving as time evolves.  
In general, it is much harder to estimate the movement of points in $\mathcal{S}$. 
However, for many applications, the movement of points $\mathcal{S}$ does not matter, due to the high-codimension of $\mathcal{S}$.
The regular part is a static Ricci flow solution. This fact is enough for us to apply the monotonicity of the Ricci flow to improve the regularity of the limit space. 

\begin{corollary}[\textbf{Rough estimate of reduced distance}]
Suppose $x,y \in \mathcal{R}(\bar{M})$.  Then $(x, 0)$ and $(y, -1)$ have the reduced distance bound
\begin{align*}
   l((x,0), (y,-1))<  100 d^2(x,y).
\end{align*}
\label{cly:MD03_1}
\end{corollary}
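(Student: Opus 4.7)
The plan is to reduce this bound to estimating the Perelman $\mathcal{L}$-length of a carefully chosen space-time curve in the approximating Ricci flows $\mathcal{M}_i$. On $\bar{M}$ the reduced distance is naturally interpreted as $\liminf_{i\to\infty} l_i((x_i,0),(y_i,-1))$ for sequences $x_i\to x$, $y_i\to y$ realizing the pointed Cheeger-Gromov convergence of Proposition~\ref{prn:SL04_1}, so it suffices to produce, for $i$ large, an admissible curve from $(y_i,-1)$ to $(x_i,0)$ whose $\mathcal{L}$-length is at most $(9/2)d^2(x,y)+o(1)$; dividing by $2\sqrt{\tau}=2$ then yields the reduced distance bound $9d^2/4$, which is comfortably below $100d^2(x,y)$.

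The first step is to pick a smooth curve $\alpha\subset\mathcal{R}(\bar{M})$ joining $x$ and $y$ with Riemannian length $L\leq 3d(x,y)+\varepsilon$; this is exactly the rough connectivity output of Proposition~\ref{prn:MD16_1}. Since $\alpha$ lies in a compact subset of $\mathcal{R}(\bar{M})$ and the convergence $(M_i,g_i(0))\to(\bar{M},\bar{g})$ is $\hat{C}^{\infty}$ on $\mathcal{R}$, I would lift $\alpha$ to unit-speed curves $\alpha_i\subset M_i$ at time $0$ with lengths $L_i\to L$, and arrange $\alpha_i\subset\mathcal{F}_{Kr}(M_i,0)$ for some scale $r=r(\alpha)>0$. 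The crucial next step is to invoke the weak long-time two-sided pseudo-locality, Proposition~\ref{prn:GC21_10}, with $T_0=1$: for $i$ large enough that $S_i<\epsilon(m,\kappa,r,1)$, every point of $\alpha_i$ remains in $\mathcal{F}_{r/K}(M_i,t)$ for all $t\in[-1,0]$. Combined with the Ricci estimate $|Ric|\leq Cr^{-1}\sqrt{S_i}$ from Lemma~\ref{lma:MA18_1}, this makes the metrics $g_i(t)$ uniformly $(1+o(1))$-equivalent to $g_i(0)$ along the space-time tube $\alpha_i\times[-1,0]$.

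The final step is the classical $\mathcal{L}$-minimizing reparameterization: set $\gamma_i(\tau)=\alpha_i(L_i\sqrt{\tau})$ for $\tau\in[0,1]$, with backward time $t=-\tau$, so $\gamma_i(0)=x_i$ and $\gamma_i(1)=y_i$. Then $|\dot\gamma_i|^2_{g_i(0)}=L_i^2/(4\tau)$ and the $\sqrt{\tau}$ weight in $\mathcal{L}(\gamma_i)=\int_0^1\sqrt{\tau}(R+|\dot\gamma_i|_{g_i(-\tau)}^2)d\tau$ absorbs the $1/\tau$ singularity at $\tau=0$, yielding $\mathcal{L}(\gamma_i)\leq(1+o(1))L_i^2/2+(2/3)S_i$. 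Passing to the limit $i\to\infty$ and then sending $\varepsilon\to 0$ gives the claim. The main obstacle, and the only delicate point, is making sure the entire space-time tube $\alpha_i\times[-1,0]$ stays inside the regular stratum so that the Ricci and scalar bounds are simultaneously available; this is precisely what Proposition~\ref{prn:GC21_10} is designed to deliver, and it is the reason the whole Section~\ref{sec:scrbd} is structured around promoting the instantaneous regularity on $\mathcal{R}(\bar{M})$ into uniform regularity along time for the computation of $\mathcal{L}$-geometric quantities.
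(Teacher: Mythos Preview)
Your argument is correct, but it takes a longer route than the paper. The paper does not go back to the approximating flows at all: since Proposition~\ref{prn:SL04_1} already tells us that $\mathcal{R}(\bar{M})\times(-\infty,\infty)$ is a \emph{static} Ricci-flat flow, one can compute the $\mathcal{L}$-length directly on the limit. Take the curve $\gamma\subset\mathcal{R}(\bar{M})$ from Proposition~\ref{prn:MD16_1} with $|\gamma|<3d(x,y)$, give it constant speed on $[0,1]$, and lift trivially to $\boldsymbol{\gamma}(\tau)=(\gamma(\tau),-\tau)$. Because $R\equiv 0$ and the metric is time-independent on $\mathcal{R}(\bar{M})$, there is no need for pseudo-locality, Ricci estimates, or metric equivalence along the tube; the integral is just $\int_0^1\sqrt{\tau}\,|\dot\gamma|^2\,d\tau\leq \tfrac{2}{3}\cdot 9d^2=6d^2$, so $l\leq 3d^2$.

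What you do differently is (i) interpret $l$ as a limit of $l_i$'s on the $\mathcal{M}_i$ and (ii) use the $\sqrt{\tau}$ reparametrization, which is $\mathcal{L}$-optimal for a fixed spatial path and yields the sharper bound $l\leq \tfrac{9}{4}d^2$. That is a nice touch, but the machinery you invoke (Proposition~\ref{prn:GC21_10} and Lemma~\ref{lma:MA18_1}) is precisely what went into proving Proposition~\ref{prn:SL04_1} in the first place; once the limit is known to be static, re-deriving metric control on the approximants is redundant for this corollary. Your approach would be the right one if the reduced distance in the sequel were used only in the sense of $\liminf_i l_i$, but here the paper treats it intrinsically on the limit space.
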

\begin{proof}
 Note that $\mathcal{R}(\bar{M}) \times (-\infty, \infty)$ is a static Ricci flow solution.   By Proposition~\ref{prn:MD16_1}, we find a curve $\gamma \subset \mathcal{R}(\bar{M})$ and $|\gamma|<3d(x,y)$.
 We can parametrize $\gamma$ such that $\gamma(0)=x$, $\gamma(1)=y$ and $|\dot{\gamma}| \equiv 3d(x, y)$. 
 This curve can be lifted to a space-time curve $\boldsymbol{\gamma}$ connecting $(x, 0)$ and $(y, -1)$, by setting $\boldsymbol{\gamma}(\tau)=(\gamma(s), -\tau)$. 
 Then we have
 \begin{align*}
   \mathcal{L}(\boldsymbol{\gamma})=\int_{0}^{1} \sqrt{\tau} (R+|\dot{\gamma}|^2) d\tau=\int_{0}^{1} \sqrt{\tau} |\dot{\gamma}|^2 d\tau \leq \frac{2}{3} \cdot 9d^2=6d^2, 
 \end{align*}
 which implies that
 \begin{align*}
   l((x,0), (y,-1)) \leq \frac{1}{2 \cdot \sqrt{1}}  \mathcal{L}(\boldsymbol{\gamma}) \leq  3 d^2(x,y) < 100 d^2(x,y). 
 \end{align*}
 Further details of more general case can be found in Lemma 4.21 of Chen-Wang~\cite{CW6}. 
\end{proof}

\begin{lemma}[\textbf{Most shortest reduced geodesics avoid high curvature part}]
For every group of numbers $0<\xi<\eta<1<H$, there is a big constant $C=C(n,A,\eta,H)$
and a small constant $\epsilon=\epsilon(n,A,H,\eta,\xi)$ with the following properties.

Let $\Omega_{\xi}$ be the collection of points $z \in M$ such that there exists a
shortest reduced geodesic $\boldsymbol{\beta}$ connecting $(x,0)$ and $(z,-1)$ satisfying
\begin{align}
   \beta \cap \mathcal{D}_{\xi}(M,0) \neq \emptyset.
\label{eqn:SL24_4}
\end{align}
Then
\begin{align}
|B_{g(0)}(x,H) \cap \mathcal{F}_{\eta}(M,0) \cap \Omega_{\xi}| < C \xi^{2p_0-1}
\label{eqn:SL24_5}
\end{align}
whenever $S<\epsilon$.
\label{lma:SL14_1}
\end{lemma}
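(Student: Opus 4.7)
The plan is to parametrize the target set via initial velocities $v \in T_x M$ of shortest $\mathcal{L}$-geodesics $\boldsymbol{\beta}_v$ emanating from $(x,0)$, count those whose spatial trace crosses $\mathcal{D}_\xi(M,0)$, and then transfer the count to an endpoint volume bound through the Jacobian of the $\mathcal{L}$-exponential map. The three key ingredients are the density estimate (Corollary~\ref{cly:MA15_1}), the weak two-sided pseudo-locality (Proposition~\ref{prn:GC21_10}), which for $S$ small lets us detect the time-$0$ bad set at nearby time slices, and Perelman's Jacobian/monotonicity machinery for $\mathcal{L}$-geodesics.

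First, for every $z \in B_{g(0)}(x,H) \cap \mathcal{F}_\eta(M,0) \cap \Omega_\xi$, Corollary~\ref{cly:MD03_1} gives $l((x,0),(z,-1)) \leq C_1 = C_1(H)$, so the initial velocity $v$ has bounded $g(0)$-norm. By hypothesis $\beta_v(\tau_0) = p_z \in \mathcal{D}_\xi(M,0)$ for some $\tau_0 \in (0,1)$, and $p_z$ is confined to $B_{g(-\tau_0)}(x, C_2)$ with $C_2 = C_2(\eta, H)$ by Lemma~\ref{lma:MA13_2}. Discretize $\tau \in (0,1)$ into a fixed number $K_0$ of subintervals of width $\delta$. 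On each chosen slice $-\tau^{(j)}$, Proposition~\ref{prn:GC21_10} upgrades $p_z \in \mathcal{D}_\xi(M,0)$ to $p_z \in \mathcal{D}_{K\xi}(M,-\tau^{(j)})$ once $S$ is small enough, and Corollary~\ref{cly:MA15_1} combined with a Vitali argument covers this slice by $N \leq C_3 \xi^{2p_0-m}$ balls of radius $\xi$. For each such ball $B(q_i,\xi)$, the bounded curvature and injectivity radius near $x$, together with $|v|_{g(0)} \leq C_1(H)$, force the set of initial velocities whose trace crosses $B(q_i,\xi)$ to have $v$-measure at most $C_4 \xi^{m-1}$: the angular contribution is a solid angle of order $\xi^{m-1}$, while $|v|$ is restricted to a bounded interval. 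Summing over all balls and the bounded collection of time slices yields a total $v$-measure of at most $C \xi^{2p_0-1}$.

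Finally, since $z \in \mathcal{F}_\eta(M,0)$, Proposition~\ref{prn:GC21_10} propagates the regularity to $z \in \mathcal{F}_{\eta/K}(M,-1)$, so the $\mathcal{L}$-exponential map $v \mapsto \beta_v(1)$ is non-degenerate at $v$ with Jacobian bounded above by a constant depending only on $\eta$ (via Perelman's monotone element $(4\pi\tau)^{-m/2} e^{-l} J$ and regularity of the endpoint). The endpoint measure at time $-1$ is therefore controlled by the $v$-measure, and Lemma~\ref{lma:MA18_2} transfers this to the time-$0$ measure up to a bounded factor, yielding the claimed $C \xi^{2p_0-1}$ bound. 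The main technical obstacle is the quantitative Jacobian estimate of the $\mathcal{L}$-exponential map near a regular endpoint and the careful bookkeeping in the covering that produces exactly the exponent $2p_0-1$ by balancing the per-ball solid angle $\xi^{m-1}$ against the ball count $\xi^{2p_0-m}$; the smallness of $S$ is used both to invoke the pseudo-locality and to keep the $\mathcal{L}$-length comparable to the Riemannian length along $\beta_v$ in the regular region.
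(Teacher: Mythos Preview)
The paper does not give a self-contained proof here; it simply cites Lemma~4.22 of Chen--Wang~\cite{CW6}. Your proposal is consistent with the methodology used there and elsewhere in this paper (compare the structure of the argument in Proposition~\ref{prn:MA18_1}): parametrize by initial data of $\mathcal{L}$-geodesics, discretize time, cover the bad set $\mathcal{D}_{\xi}$ at each slice using the density estimate to produce $O(\xi^{2p_0-m})$ balls of radius $\xi$, attach to each ball a solid-angle cost $O(\xi^{m-1})$ in the space of initial velocities, and push forward to endpoint volume via the $\mathcal{L}$-Jacobian. The exponent bookkeeping $\xi^{2p_0-m}\cdot\xi^{m-1}=\xi^{2p_0-1}$ is exactly right.

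Two small points. First, your invocation of Corollary~\ref{cly:MD03_1} is misplaced: that corollary is stated on the \emph{limit} space $\bar{M}$, not on $M$. What you actually need on $M$ is the analogous bound coming directly from the connectivity estimate in Definition~\ref{dfn:SC02_1} plus $|R|\leq S$ small, which gives a curve $\gamma\subset\mathcal{F}_{\frac{1}{2}\epsilon_b}$ with $|\gamma|<2d(x,z)$ and hence $\mathcal{L}(\boldsymbol{\gamma})\leq C(H)$. Second, the solid-angle estimate ``$C_4\xi^{m-1}$ per ball'' requires a lower bound on the distance from $x$ to the ball $B(q_i,\xi)$; you should note that the $\mathbf{scr}$ assumption forces $x$ itself to be uniformly regular (or this is an implicit hypothesis), so for $\tau$ below a fixed threshold the $\mathcal{L}$-geodesic stays in a uniformly regular neighborhood of $x$ and cannot meet $\mathcal{D}_{\xi}$ at all, eliminating the degenerate small-$\tau$ regime. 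With these fixes your outline matches the intended argument.
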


\begin{proof}
 Same as Lemma 4.22 of Chen-Wang~\cite{CW6}. 
\end{proof}

\begin{lemma}[\textbf{Rough weak convexity by reduced geodesics}]
Suppose $\{(M_i^m, g_i(t)), -T_i \leq t \leq T_i\}$ is a sequence of Ricci flows satisfying (\ref{eqn:SL06_1}). 
Suppose $x_i \in M_i$.
Let $(\bar{M}, \bar{x}, \bar{g})$ be the limit space of $(M_i, x_i, g_i(0))$, $\mathcal{R}$ be the regular part of $\bar{M}$ and $\bar{x} \in \mathcal{R}$.
Suppose $\bar{t}<0$ is a fixed number. Then every $(\bar{z}, \bar{t})$ can be connected to $(\bar{x}, 0)$ by a smooth reduced geodesic, whenever $\bar{z}$ is away from a closed measure-zero set.
\label{lma:SK27_4}
\end{lemma}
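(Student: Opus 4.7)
The plan is to construct the required reduced geodesic in $\bar{M}$ as a Cheeger-Gromov limit of shortest $\mathcal{L}$-geodesics in the approximating flows $\mathcal{M}_i$, using Lemma~\ref{lma:SL14_1} to ensure that, for generic endpoints $\bar{z}$, these geodesics stay uniformly away from the degenerating high-curvature region. First, I would fix $\bar{z} \in \mathcal{R}(\bar{M})$ and lift it to a sequence $z_i \in M_i$ converging to $\bar{z}$ under the $\hat{C}^\infty$ convergence of Proposition~\ref{prn:SL04_1}. Proposition~\ref{prn:MD16_1} gives a curve in $\mathcal{R}$ joining $\bar{x}$ to $\bar{z}$ of length at most $3 d_{\bar{g}}(\bar{x}, \bar{z})$, and lifting it to a static space-time curve as in the proof of Corollary~\ref{cly:MD03_1} (adapted to a general $\bar{t} < 0$ by an obvious reparametrization) yields a uniform upper bound on the reduced distance $l_{g_i}\bigl((z_i, \bar{t}), (x_i, 0)\bigr)$. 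In particular, for all large $i$ there exist shortest $\mathcal{L}$-geodesics $\beta_i$ from $(x_i, 0)$ to $(z_i, \bar{t})$ with uniformly bounded $\mathcal{L}$-length.

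Next, I apply Lemma~\ref{lma:SL14_1} on each $\mathcal{M}_i$ with $H$ chosen so that $z_i \in B_{g_i(0)}(x_i, H)$, $\eta$ small enough that $\bar{z} \in \mathcal{F}_\eta(\bar{M}, 0)$ for large $i$, and $\xi$ eventually sent to zero. Because $S_i \to 0$, the smallness hypothesis is satisfied for all large $i$, so the bad-endpoint set $\Omega_\xi^i$ has $m$-dimensional Hausdorff measure at most $C \xi^{2p_0 - 1}$, with $C$ independent of $i$. Passing to the limit under the $\hat{C}^\infty$ convergence produces sets $\bar{\Omega}_\xi \subset \mathcal{R}(\bar{M})$ with $|\bar{\Omega}_\xi| \leq C \xi^{2p_0 - 1}$. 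I then set
\begin{align*}
E \triangleq \mathcal{S} \;\cup\; \overline{\bigcap_{k \geq 1} \bar{\Omega}_{1/k}},
\end{align*}
which, using $2 p_0 - 1 > 0$ together with the codimension bound on $\mathcal{S}$, is a closed subset of $\bar{M}$ of $m$-dimensional Hausdorff measure zero.

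For any $\bar{z} \in \bar{M} \setminus E$ there exists $\xi_0 > 0$ such that $\beta_i \cap \mathcal{D}_{\xi_0}(M_i, 0) = \emptyset$ for all large $i$. Proposition~\ref{prn:GC21_10} upgrades this to $\beta_i$ lying in a uniformly regular region throughout the time interval $[\bar{t}, 0]$, so by the canonical radius assumption in Definition~\ref{dfn:SC02_1} the curvature and all its derivatives are uniformly bounded along $\beta_i$. The reduced geodesic equation is then an ODE with uniformly controlled coefficients, so a subsequence of $\beta_i$ converges in $C^\infty_{\mathrm{loc}}$ to a smooth curve $\bar{\beta} \subset \mathcal{R}$. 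Since $g_i(t)$ converges in $\hat{C}^\infty$ to the static limit $\bar{g}$ on the regular part (Proposition~\ref{prn:SL04_1}), $\bar{\beta}$ satisfies the reduced geodesic equation on $(\mathcal{R}, \bar{g})$ and connects $(\bar{x}, 0)$ with $(\bar{z}, \bar{t})$.

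The main obstacle will be passing the measure bound $|\Omega_\xi^i| \leq C \xi^{2p_0 - 1}$ from each $M_i$ to the limit $\bar{M}$ in a way compatible with the Cheeger-Gromov convergence. This forces us to rely on Lemma~\ref{lma:SL14_1}, which in turn depends on the high-codimension estimate (\ref{eqn:SB13_4}) in Proposition~\ref{prn:MD16_1} and on the precise exponent $p_0 > 1/2$; otherwise $\bigcap_k \bar{\Omega}_{1/k}$ could carry positive measure, leaving open the possibility of a positive-measure set of endpoints reachable only by reduced geodesics that squeeze through $\mathcal{S}$.
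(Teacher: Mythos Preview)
Your proposal is correct and follows essentially the same approach as the paper, which simply cites Lemma~4.23 of Chen--Wang~\cite{CW6} and points to Lemma~\ref{lma:SL14_1} as the key input; you have faithfully fleshed out that argument by producing shortest $\mathcal{L}$-geodesics in the approximating flows, using the measure bound (\ref{eqn:SL24_5}) to exclude a measure-zero set of endpoints, and invoking Proposition~\ref{prn:GC21_10} to promote the time-$0$ regularity along $\beta_i$ to uniform space-time regularity so that a smooth limit geodesic exists. The only points that deserve a little extra care in a full write-up are (i) the precise definition of $\bar{\Omega}_\xi$ as the Kuratowski upper limit of the $\Omega_\xi^i$ (so that $\bar z\notin\bar{\Omega}_\xi$ genuinely forces $z_i\notin\Omega_\xi^i$ for large $i$), and (ii) the passage of the volume bound to the limit, which is justified because on $\mathcal{F}_\eta$ the convergence is smooth and the bound in (\ref{eqn:SL24_5}) is taken inside $\mathcal{F}_\eta(M_i,0)$.
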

\begin{proof}
 Same as Lemma 4.23 of Chen-Wang~\cite{CW6}. It follows directly from the application of the estimates in Lemma~\ref{lma:SL14_1}. 
\end{proof}

\begin{proposition}[\textbf{Weak convexity by Riemannian geodesics}]
Same conditions as in Lemma~\ref{lma:SK27_4}.
Then away from a measure-zero set, every point in $\mathcal{R}$ can be connected to $\bar{x}$ with a unique smooth shortest geodesic. Consequently, $\mathcal{R}$ is weakly convex.
\label{prn:SC30_1}
\end{proposition}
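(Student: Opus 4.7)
The plan is to project the smooth reduced geodesics supplied by Lemma~\ref{lma:SK27_4} onto space, then use the static Ricci-flat structure on $\mathcal{R}$ (from Proposition~\ref{prn:SL04_1}) to identify each projection with a Riemannian shortest geodesic. Fix $\bar{t} = -1$ and let $\tau = -t$. Let $\mathcal{C}_{\bar{x}}$ denote the closed measure-zero subset supplied by Lemma~\ref{lma:SK27_4}. For each $\bar{z} \in \mathcal{R} \setminus \mathcal{C}_{\bar{x}}$, a smooth minimizing reduced geodesic $\boldsymbol{\beta}$ connects $(\bar{x},0)$ to $(\bar{z},-1)$ and lies entirely in the smooth spacetime $\mathcal{R} \times [0,1]$; denote its spatial projection by $\beta:[0,1] \to \mathcal{R}$.

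On $\mathcal{R}$, Proposition~\ref{prn:SL04_1} gives $R \equiv 0$ and a time-independent limit metric, hence $\mathcal{L}(\boldsymbol{\beta}) = \int_0^1 \sqrt{\tau}\, |\dot{\beta}|_{\bar{g}}^2\, d\tau$. A direct computation shows that the minimum of $\int_0^1 \sqrt{\tau}\, v(\tau)^2\, d\tau$ over nonnegative $v$ with $\int_0^1 v(\tau)\, d\tau = \ell$ equals $\ell^2/2$, attained by $v(\tau) = \ell/(2\sqrt{\tau})$. Therefore, among spatial paths in $\mathcal{R}$ of Riemannian length $\ell$, the optimal reparametrization achieves $\mathcal{L} = \ell^2/2$; since $\boldsymbol{\beta}$ is an $\mathcal{L}$-minimizer, this forces $\beta$ to have minimal spatial length. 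In other words, $\beta$ is a smooth Riemannian shortest geodesic in $\mathcal{R}$ from $\bar{x}$ to $\bar{z}$ with Riemannian length $L = d_{\mathcal{R}}(\bar{x},\bar{z})$, and $l((\bar{x},0),(\bar{z},-1)) = L^2/4$.

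To finish one must identify $d_{\mathcal{R}}(\bar{x},\bar{z})$ with $d_{\bar{M}}(\bar{x},\bar{z})$ and argue uniqueness. The identification $d_{\mathcal{R}} = d_{\bar{M}}$ on $\mathcal{R}\times\mathcal{R}$ is exactly the assertion that the metric structure of $\bar{M}$ is determined by its regular part---the remark in Proposition~\ref{prn:SL04_1}---which in turn follows from the high Minkowski codimension of $\mathcal{S}$ in (\ref{eqn:SB13_4}) together with the rough $3$-connectivity of $\mathcal{R}$ from Proposition~\ref{prn:MD16_1}. Uniqueness at generic points is inherited from uniqueness of the minimizing reduced geodesic: the reduced distance $l$ is locally Lipschitz and semiconcave on $\mathcal{R}$, hence differentiable off a further measure-zero set, and at any point of differentiability the $\mathcal{L}$-minimizer (and therefore its spatial projection) is unique. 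The main obstacle is the quantitative step of upgrading the rough $3$-connectivity of Proposition~\ref{prn:MD16_1} to the sharp equality $d_{\mathcal{R}} = d_{\bar{M}}$: this is where the codimension bound $\dim_{\mathcal{M}}\mathcal{S} \le m - 2p_0 < m-3$ is used essentially, to perturb near-minimizing curves in $\bar{M}$ off $\mathcal{S}$ without appreciable length change.
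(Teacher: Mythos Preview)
Your proposal is correct and follows essentially the same route as the paper, which simply cites Proposition~4.25 of Chen--Wang~\cite{CW6} and remarks that ``the weak convexity of $\mathcal{R}$ by Riemannian geodesics originates from weak convexity of $\mathcal{R}\times(-1,0)$ by reduced geodesics, i.e., Lemma~\ref{lma:SK27_4}.'' Your explicit identification of $\mathcal{L}$-minimizers with Riemannian shortest geodesics via the substitution $v(\tau)=\ell/(2\sqrt{\tau})$ (equivalently $\sigma=2\sqrt{\tau}$) on the static Ricci-flat regular part, together with the $d_{\mathcal{R}}=d_{\bar M}$ step deferred to the codimension bound, is precisely the mechanism behind that citation; you have supplied more detail than the paper itself does.
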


\begin{proof}
  Same as Proposition 4.25 of Chen-Wang~\cite{CW6}.  The weak convexity of $\mathcal{R}$ by Riemannian geodesics originates from weak convexity of $\mathcal{R} \times (-1, 0)$ by reduced geodesic, i.e., Lemma~\ref{lma:SK27_4}. 
\end{proof}

With the ``almost scalar-flat" condition (\ref{eqn:SL06_1}), we can improve the regularity of limit space $\bar{M}$ in (\ref{eqn:MD16_4}) of Proposition~\ref{prn:MD16_1}.

\begin{proposition}[\textbf{Metric structure of a blowup limit}]
Suppose $\{(M_i^{m}, x_i,  g_i(t)), -T_i \leq t \leq T_i\}$ is a sequence of Ricci flows satisfying (\ref{eqn:MA16_2}) and (\ref{eqn:SL06_1}). 
Let $(\bar{M}, \bar{x}, \bar{g})$ be the limit space of $(M_i, x_i, g_i(0))$. Then  $\bar{M}$ satisfies all the 6 defining properties of $\widetilde{\mathscr{KS}}(n,\kappa)$ except the K\"ahler condition. 
\label{prn:MD03_1}
\end{proposition}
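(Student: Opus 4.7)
The plan is to assemble the six defining properties of $\widetilde{\mathscr{KS}}(n,\kappa)$ from Definition~\ref{dfn:GC21_1} (omitting the K\"ahler condition) by combining the results already established in Section~\ref{sec:scrbd}, upgrading the rough weak compactness of Proposition~\ref{prn:MD16_1} using the extra input $S_i \to 0$ from condition (\ref{eqn:SL06_1}).

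First I would extract properties 1, 2 (minus K\"ahler), and 4 directly from the earlier propositions. Proposition~\ref{prn:MD16_1} already supplies the regular-singular decomposition $\bar{M}=\mathcal{R}\cup\mathcal{S}$ with $\mathcal{R}$ an open smooth Riemannian manifold, together with $\dim_{\mathcal{M}}\mathcal{S} \le m-2p_0$. Since $p_0 = 2-\tfrac{1}{500m}>\tfrac{3}{2}$, this immediately yields $\dim_{\mathcal{M}}\mathcal{S}<m-3$, i.e.\ property 4. Next, Proposition~\ref{prn:SL04_1} extends the convergence to a family whose limit is independent of $t$; combined with the Ricci flow equation $\partial_t g=-2Ric$, this forces $Ric\equiv 0$ on $\mathcal{R}$, establishing property 2 up to the K\"ahler structure. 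Property 3, the weak convexity of $\mathcal{R}$, is exactly Proposition~\ref{prn:SC30_1}.

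It remains to verify properties 5 and 6. For property 5, every $\bar{y}\in\mathcal{R}$ has a Euclidean tangent cone, so $\mathrm{v}(\bar{y})=1$. Conversely, suppose $\bar{y}\in\bar{M}$ satisfies $\mathrm{v}(\bar{y})>1-2\delta_0$. Then for sufficiently small $r$ one has $\omega_m^{-1}r^{-m}|B(\bar{y},r)|>1-\delta_0$, and by volume convergence under the pointed Cheeger-Gromov topology, approximating points $y_i\in M_i$ satisfy the analogous inequality at time $0$. The regularity estimate in clause~2 of Definition~\ref{dfn:SC02_1} produces a uniform bound $|Rm|(y_i,0)\le 4c_a^{-2}r^{-2}$ on a definite ball, and the two-sided pseudo-locality packaged into $\mathbf{scr}\ge 1$ (Definition~\ref{dfn:MA13_1}) extends this to a parabolic neighborhood. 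Passing to the limit identifies $\bar{y}$ with an interior point of the smooth stratum $\mathcal{R}$; hence $\mathrm{v}\le 1-2\delta_0$ throughout $\mathcal{S}$. For property 6, with $\mathcal{R}$ now known to be Ricci-flat and $\dim_{\mathcal{M}}\mathcal{S}<m-3$, the standard Bishop-Gromov monotonicity of $\omega_m^{-1}r^{-m}|B(\bar{y},r)|$ extends across the thin singular set; the $\kappa$-noncollapsing estimate (\ref{eqn:MD10_1}) at scale $1$ is preserved under the limit, and monotonicity then transports this lower bound to all scales, yielding $\mathrm{avr}(\bar{M})\ge\kappa$.

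The main obstacle is property 5. Although the previous propositions give good control on a neighborhood of the regular part, identifying precisely which limit points lie in $\mathcal{R}$ requires upgrading an ``almost Euclidean'' volume ratio at a single time slice to genuine smooth convergence in a parabolic neighborhood. This is exactly where the two-sided pseudo-locality built into $\mathbf{scr}$, together with the almost scalar-flat condition $S_i\to 0$, becomes essential: it guarantees that the curvature bound produced by the canonical radius persists long enough in time for an Anderson-type harmonic-radius argument to provide smooth charts for $\bar{M}$ near $\bar{y}$, and it is the step that forbids a ``tangent-cone-like'' singular point from masquerading as a high-density regular point in the limit.
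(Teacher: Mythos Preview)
Your overall architecture---verifying properties 1--6 of Definition~\ref{dfn:GC21_1} using the preceding propositions---matches the paper's sketch, and your treatment of properties 1, 2, 3, 4 is correct and coincides with the paper's indications.

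There is, however, a genuine gap in your argument for property~5. You assert that $\mathrm{v}(\bar y)>1-2\delta_0$ implies $\omega_m^{-1}r^{-m}|B(\bar y,r)|>1-\delta_0$ for small $r$. This does not follow: the volume density is only the \emph{limit} of the volume ratios, so if (say) $\mathrm{v}(\bar y)=1-1.5\delta_0$, the ratio need never exceed $1-\delta_0$, and you cannot invoke the regularity clause of the canonical radius (which triggers only at threshold $1-\delta_0$). This is precisely the ``gap'' content of property~5---one must rule out values strictly between $1-2\delta_0$ and $1$---and it does not come for free from the $\mathbf{scr}$ assumption or pseudo-locality. The paper handles this by a different mechanism: it uses that on the limit the volume density coincides with Perelman's reduced-volume density at the infinitesimal level (Theorem~2.63 of \cite{CW6}), together with the monotonicity of reduced volume along the static limit flow, to force the dichotomy $\mathrm{v}=1$ or $\mathrm{v}\le 1-2\delta_0$.

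Your argument for property~6 also runs in the wrong direction. Bishop--Gromov monotonicity on a Ricci-flat space says $r\mapsto \omega_m^{-1}r^{-m}|B(\bar y,r)|$ is \emph{non-increasing}; a lower bound at scale $1$ therefore propagates only to \emph{smaller} scales, not to $r\to\infty$, so it cannot by itself yield $\mathrm{avr}(\bar M)\ge\kappa$. One needs the non-collapsing on the approximating sequence to hold at arbitrarily large scales before passing to the limit (this is where the rescaling in the blowup situation, and the condition $\mathrm{Vol}(M_i)\to\infty$ from (\ref{eqn:SL06_1}), become relevant), rather than appealing to monotonicity on $\bar M$ after the fact.
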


\begin{proof}
 Same as Theorem 4.31 of Chen-Wang~\cite{CW6}, where the full details are provided.
 Here we only sketch the key point.
 The proof consists of checking all the 6 defining properties of the model space  $\widetilde{\mathscr{KS}}(n,\kappa)$. 
 In particular, the properties 3, 4 and 5 in Definition~\ref{dfn:SC24_1} are crucial.   We remark that property 3, the weak convexity of $\mathcal{R}$, follows from Proposition~\ref{prn:SC30_1}.
 Property 4, the high codimension of $\mathcal{S}$, follows from Proposition~\ref{prn:MD16_1}.  
 Property 5, the gap between regular and singular property, follows from the coincidence of volume density and reduced volume density on infinitesimal level(c.f. Theorem 2.63 of Chen-Wang~\cite{CW6}),  and the monotonicity of reduced volume.
\end{proof}

\begin{corollary}(c.f. Proposition 4.19 of Chen-Wang~\cite{CW6})
 Let $\bar{M}=\mathcal{R} \cup \mathcal{S}$ be the limit space in (\ref{eqn:MD16_4}) of Proposition~\ref{prn:MD16_1}.   Then every tangent space of $\bar{M}$ is a metric cone and $ \dim_{\mathcal{H}} \mathcal{S} \leq m-4$.
\label{cly:MD24_1} 
\end{corollary}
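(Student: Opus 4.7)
The plan is to invoke Proposition~\ref{prn:MD03_1} as the starting point: it already gives $\bar{M}$ all six defining properties of $\widetilde{\mathscr{KS}}(n,\kappa)$ from Definition~\ref{dfn:GC21_1} except the K\"ahler condition. Since neither the tangent-cone statement nor the Hausdorff dimension bound uses the K\"ahler structure of the regular part (only the static Ricci-flat structure and the mild nature of the singular set), the remaining argument runs in parallel to Proposition 4.19 of Chen-Wang~\cite{CW6}, and the K\"ahler condition enters only to guarantee that the relevant model space is nonempty. Hence I would present the proof as a direct transplantation of that proposition, reading off its two consequences from the structural facts now in hand.

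First I would handle the tangent cone statement. Fix any $\bar{y} \in \bar{M}$ and let $T_{\bar{y}}\bar{M}$ be a subsequential pointed Gromov-Hausdorff tangent space. By Proposition~\ref{prn:SL04_1} the limit Ricci flow is static on its regular part, so Perelman's reduced volume based at $(\bar{y},0)$ descends to a monotone quantity on $T_{\bar{y}}\bar{M}$ in the rescaled limit. The coincidence of volume density and reduced volume density on the infinitesimal level (Theorem 2.63 of Chen-Wang~\cite{CW6}), combined with monotonicity of reduced volume, forces this quantity to be constant on $T_{\bar{y}}\bar{M}$. Then the volume-cone-implies-metric-cone principle of Cheeger-Colding, applied on the regular part where it is genuinely Ricci-flat and using Proposition~\ref{prn:SC30_1} to connect regular points by shortest geodesics away from a measure-zero set, yields that $T_{\bar{y}}\bar{M}$ is a metric cone.

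Second I would upgrade the Minkowski codimension $> 2p_0$ from Proposition~\ref{prn:MD16_1} to the sharp Hausdorff codimension four. With tangent cone structure in place, stratify $\mathcal{S}$ by the splitting rank: $\mathcal{S}^k$ consists of those points no tangent cone of which splits off an isometric $\R^{k+1}$ factor. Standard dimension reduction gives $\dim_{\mathcal{H}} \mathcal{S}^k \leq k$, so it suffices to show $\mathcal{S}^{m-3} \setminus \mathcal{S}^{m-4} = \emptyset$; equivalently, to rule out tangent cones of the form $\R^{m-2} \times C(S^1_\beta)$ with $\beta < 2\pi$. But such a cone has volume density in the open interval $(1-2\delta_0,\,1)$ for $\delta_0$ the Anderson constant, contradicting property (5) of Definition~\ref{dfn:GC21_1}. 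This gives $\dim_{\mathcal{H}} \mathcal{S} \leq m-4$.

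The main obstacle is the volume-cone-implies-metric-cone step in the presence of a nontrivial singular set, since the classical Cheeger-Colding argument integrates by parts over geodesic balls. What rescues us is the strict inequality $\dim_{\mathcal{M}} \mathcal{S} < m-3$ from Proposition~\ref{prn:MD16_1}: cutoff functions vanishing in a small neighborhood of $\mathcal{S}$ cost negligibly in $W^{1,2}$ capacity, so every integration-by-parts computation performed on $\mathcal{R}$ converges to the expected limit. This is exactly the mechanism used in section 4.3 of Chen-Wang~\cite{CW6}, and I would simply cite Proposition 4.19 there for the technical details rather than reproduce them.
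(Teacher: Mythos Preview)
Your treatment of the metric cone statement matches the paper's: both invoke the monotonicity of Perelman's reduced volume (or local $W$-functional) on the static limit flow and defer the volume-cone-implies-metric-cone step to Chen-Wang~\cite{CW6}. That part is fine.

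The Hausdorff dimension argument, however, has a genuine gap. Your central claim, that the cone $\R^{m-2} \times C(S^1_\beta)$ has volume density in the interval $(1-2\delta_0,\,1)$, is false: the density at any axis point equals $\beta/(2\pi)$, which ranges over all of $(0,1)$ as $\beta$ varies. Property~(5) of Definition~\ref{dfn:GC21_1} only forces $\beta/(2\pi) \leq 1-2\delta_0$; it does not exclude such cones. Your stratification bookkeeping is also off: the cones $\R^{m-2}\times C(S^1_\beta)$ correspond to $\mathcal{S}^{m-2}\setminus\mathcal{S}^{m-3}$, not to $\mathcal{S}^{m-3}\setminus\mathcal{S}^{m-4}$, and to reach $\mathcal{S}=\mathcal{S}^{m-4}$ you must also rule out tangent cones of the form $\R^{m-3}\times C(\Sigma^2)$, which property~(5) alone cannot do either.

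The paper takes a different and shorter route. It does not try to classify or exclude individual tangent cones via the volume gap. Instead it recalls that (\ref{eqn:SB13_4}) already gives $\dim_{\mathcal{M}}\mathcal{S}\leq m-2p_0$ with $p_0=2-\frac{1}{500m}$, hence $\dim_{\mathcal{M}}\mathcal{S}<m-3$. Once tangent cones are metric cones, the standard stratification forces the Hausdorff dimension to be an integer bound: any tangent cone splitting off $\R^{m-2}$ or $\R^{m-3}$ would itself carry a singular set of dimension at least $m-3$, which violates the Minkowski estimate inherited under blow-up. Hence $\mathcal{S}\subset\mathcal{S}^{m-4}$ and $\dim_{\mathcal{H}}\mathcal{S}\leq m-4$. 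The driver of codimension four here is the density estimate built into the canonical radius (item~3 of Definition~\ref{dfn:SC02_1}), not the Anderson gap; you had this ingredient in hand via Proposition~\ref{prn:MD16_1} but did not use it.
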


 \begin{proof}
  The metric cone property follows from the monotonicity of reduced volume or Perelman's local W-functional, as done in Theorem 4.18 of Chen-Wang~\cite{CW6}. 
  By the metric cone property, the Hausdorff dimension of singularity is an integer number and satisfies (\ref{eqn:SB13_4}). Consequently, we have $\dim_{\mathcal{H}} \mathcal{S} \leq m-4$. 
 \end{proof}

\section{A priori estimate of $\mathbf{scr}$}

\begin{proposition}[\textbf{Weak continuity of canonical radius and space-time canonical radius}]
Suppose $\{(M_i^{n}, x_i,  g_i(t)), -T_i \leq t \leq T_i\}$ is a sequence of K\"ahler Ricci flows satisfying (\ref{eqn:MA16_2}) and (\ref{eqn:SL06_1}). 
Then we have
\begin{align}
 &\lim_{i \to \infty} \mathbf{cr}(M_i, 0)=\infty,  \label{eqn:MD03_2}\\
 &\lim_{i \to \infty} \mathbf{scr}(M_i, 0)=\infty.  \label{eqn:MD03_3}   
\end{align}
\label{prn:MD03_2}
\end{proposition}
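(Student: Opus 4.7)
The plan is a contradiction argument resting on Proposition~\ref{prn:MD03_1}, which identifies the Cheeger-Gromov limit of a sequence $(M_i, y_i, g_i(0))$ as a space $\bar{M}$ satisfying all defining properties of $\widetilde{\mathscr{KS}}(n,\kappa)$ except possibly the K\"ahler one, together with Remark~\ref{rmk:MD24_1}, which asserts that points of $\widetilde{\mathscr{KS}}(n,\kappa)$ have infinite canonical radius. I first prove (\ref{eqn:MD03_2}) and then combine it with Proposition~\ref{prn:GC21_10} to obtain (\ref{eqn:MD03_3}).

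Suppose (\ref{eqn:MD03_2}) failed. Then along a subsequence there would exist a finite $C$ and points $y_i\in M_i$ with $\mathbf{cr}(y_i,0)\leq C$. Re-basing the flows at $y_i$, the hypotheses (\ref{eqn:MA16_2}) and (\ref{eqn:SL06_1}) are preserved, so Proposition~\ref{prn:MD03_1} supplies a subsequential Cheeger-Gromov limit $(M_i,y_i,g_i(0))\longright{\hat{C}^{\infty}}(\bar{M},\bar{y},\bar{g})$. Since the approximating flows are K\"ahler, the complex structures $J_i$ converge smoothly on $\mathcal{R}(\bar{M})$ to a limit complex structure compatible with $\bar{g}$, so $\bar{M}\in\widetilde{\mathscr{KS}}(n,\kappa)$ and Remark~\ref{rmk:MD24_1} yields $\mathbf{cr}(\bar{y})=\infty$. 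The contradiction reduces to the lower semi-continuity statement: for every $R<\infty$, $\mathbf{cr}(y_i,0)\geq R$ for all large $i$. I would verify this property by property in Definition~\ref{dfn:SC02_1}. The volume ratio estimate is immediate from Cheeger-Gromov convergence of volumes. The regularity estimate follows from smooth convergence on $\mathcal{R}(\bar{M})$: any $y\in M_i$ with almost-Euclidean volume ratio at scale $r$ is $C^\infty$-close to a regular point of $\bar{M}$, where the curvature bound holds trivially. The density estimate is obtained by comparing $r^{2p_0-m}\int_{B(y_i,r)}\mathbf{vr}^{(r)}(y)^{-2p_0}\,dy$ with its counterpart on $\bar{M}$, which is finite because the refined Minkowski codimension of $\mathcal{S}$ from Theorem~\ref{thm:GC21_2} is strictly greater than $2p_0$. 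The connectivity estimate exploits the strong convexity of $\mathcal{R}(\bar{M})$ (again a consequence of Theorem~\ref{thm:GC21_2}): one joins two nearby regular points by a short geodesic staying in $\mathcal{R}(\bar{M})$, then lifts it to $M_i$ via smooth convergence to obtain a curve in $\mathcal{F}_{\epsilon_b r/2}^{(r)}(M_i)$, and a diagonal argument produces uniform connectivity for all $r\leq R$ and large $i$.

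For (\ref{eqn:MD03_3}), if $\mathbf{scr}(M_i,0)$ were bounded, then by (\ref{eqn:MD03_2}) the obstruction must be the two-sided pseudo-locality (\ref{eqn:MB04_0}) rather than a defect of $\mathbf{cr}$. Picking witnesses $y_i\in M_i$ and scales $r_i$ where (\ref{eqn:MB04_0}) fails, one has $y_i\in\mathcal{F}_{Kr_i}(M_i,0)$ after enlarging $K$. Since $S_i\to 0$, Proposition~\ref{prn:GC21_10} applies and forces $y_i\in\mathcal{F}_{r_i/K}(M_i,t)$ throughout a definite space-time window around $0$; combined with (\ref{eqn:MA16_2}) and the improving-regularity property of $\mathbf{cr}$ at the enlarged canonical radius provided by (\ref{eqn:MD03_2}), the curvature bound (\ref{eqn:MB04_0}) must in fact hold, a contradiction.

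The principal obstacle throughout the argument is the connectivity step of the first part: it demands the strong convexity of $\mathcal{R}(\bar{M})$, a non-trivial regularity improvement packed inside Theorem~\ref{thm:GC21_2}, together with the high codimension of $\mathcal{S}$ to permit smooth lifting of regular geodesics from $\bar{M}$ back to $M_i$. The density estimate inherits a similar subtlety, as the codimension $>2p_0$ of $\mathcal{S}$ is precisely what makes $\mathbf{vr}^{-2p_0}$ uniformly integrable under the limit.
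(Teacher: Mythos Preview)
Your treatment of (\ref{eqn:MD03_2}) matches the paper: one takes a subsequential Cheeger--Gromov limit via Proposition~\ref{prn:MD03_1}, observes that the K\"ahler structure passes to the regular part so that $\bar{M}\in\widetilde{\mathscr{KS}}(n,\kappa)$, and then transfers the sharper model-space constants of Remark~\ref{rmk:MD24_1} back to $M_i$ property by property. The subtleties you flag (strong convexity, Minkowski codimension $>2p_0$) are exactly the ones the paper packages into Theorem~\ref{thm:GC21_2}.

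Your argument for (\ref{eqn:MD03_3}), however, has a genuine gap. From the failure of (\ref{eqn:MB04_0}) at scale $r_i$ you have only $\mathbf{vr}(y_i,0)\geq r_i$, i.e.\ $y_i\in\mathcal{F}_{r_i}(M_i,0)$; you cannot ``enlarge $K$'' to force $y_i\in\mathcal{F}_{Kr_i}(M_i,0)$. Applying Proposition~\ref{prn:GC21_10} with $r=r_i/K$ yields at best $y_i\in\mathcal{F}_{r_i/K^2}(M_i,t)$, and then the regularity estimate at time $t$ (available since $\mathbf{cr}(M_i,t)\geq 1$) gives $|Rm|\leq 4c_a^{-2}K^4 r_i^{-2}$ in a ball of radius $\sim c_a r_i/K^2$. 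This is off by a fixed factor from the constant $4c_a^{-2}r_i^{-2}$ and the ball of radius $\tfrac{1}{2}c_a r_i$ demanded by (\ref{eqn:MB04_0}), so no contradiction follows. The loss of a $K$-factor in Proposition~\ref{prn:GC21_10} is intrinsic to its statement; it is a \emph{weak} pseudo-locality and does not recover the exact constants in Definition~\ref{dfn:MA13_1}.

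The paper closes this gap by a different mechanism: after locating the failure at $(y_i,s_i)$ with $|Rm|(y_i,s_i)>4c_a^{-2}r_i^{-2}$, it rescales by $r_i$ (which lies in a bounded range) and passes to a limit. The crucial extra input is Proposition~\ref{prn:SL04_1}: under (\ref{eqn:SL06_1}) the limit is \emph{static}, so the curvature at $(y_\infty,s_\infty)$ equals the curvature at $(y_\infty,0)$, and the latter is controlled by the sharp model-space estimate of Remark~\ref{rmk:MD24_1}, yielding $|Rm|(y_\infty,s_\infty)<c_a^{-2}$ and hence a contradiction. In short, you need the static-limit Proposition~\ref{prn:SL04_1} rather than the quantitative Proposition~\ref{prn:GC21_10} to avoid the constant deterioration.
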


\begin{proof}
  The statement and proof of (\ref{eqn:MD03_2}) is exactly the same as that of Theorem 4.39 of Chen-Wang~\cite{CW6}. 
  The key is to use the a priori estimate in the model space $\widetilde{\mathscr{KS}}(n,\kappa)$ and the Cheeger-Gromov convergence to
  improve the originally assumed estimates for each $M_i$.   
  Note that each $M_i^n$ is a K\"ahler manifold of complex dimension $n$, we know from Proposition~\ref{prn:MD03_1} that the limit space $\bar{M}$ must locate in the model space $\widetilde{\mathscr{KS}}(n,\kappa)$. 
  
   Applying the same idea, we prove (\ref{eqn:MD03_3}).  Suppose (\ref{eqn:MD03_3}) fails, then by taking subsequence if necessary, we can find $x_i \in M_i$ such that
   \begin{align*}
      \lim_{i \to \infty}  \mathbf{scr}(M_i, 0) < 0.5 D<\infty
   \end{align*}
   for some $D>1$.  
   Since each $M_i \times [-T_i, T_i]$ is a compact space-time, we can find $x_i \in M_i$ such that $\mathbf{scr}(x_i, 0) \leq \mathbf{scr}(M_i, 0)+1$.  It follows that
   \begin{align}
      \lim_{i \to \infty}  \mathbf{scr}(x_i, 0) < D <\infty.   \label{eqn:MD20_1}
   \end{align}
   Note that $\mathbf{cr}(x_i, 0) \geq \mathbf{cr}(M_i, 0)$, which is very large by (\ref{eqn:MD03_2}).   This means for some $r_i \in (0, D]$ satisfying $\omega_{m}^{-1}r_i^{-m}|B(x_i,r_i)|_{g_i(0)} \geq 1-\delta_0$, 
   we do not have (\ref{eqn:MB04_0}).  In other words, we can find $y_i \in B_{g_i(0)}(x_i, r_i)$ and $t_i \in [-\frac{1}{4}c_a^2 r_i^2, \frac{1}{4}c_a^2 r_i^2]$ such that 
   \begin{align*}
      |Rm|(y_i,t_i) > 4c_a^{-2}r_i^{-2}.
   \end{align*}
   Let $\tilde{g_i}(t)=r_i^{-2} g_i(r_i^2 t)$. We have a sequence of Ricci flows $\{(M_i^n, x_i, \tilde{g}_i(t)), -r_i^{-2}T_i \leq t \leq r_i^{-2} T_i \}$ satisfying the following properties.
   \begin{itemize}
   \item $|B_{\tilde{g}_i(0)}(x_i, 1)| \geq (1-\delta_0) \omega_m$. 
   \item  For some $y_i \in B_{\tilde{g}_i(0)}(x_i, 1)$ and $s_i \in [-\frac{1}{4}c_a^2, \frac{1}{4} c_a^2]$, we have
   \begin{align}
     |\widetilde{Rm}|(y_i, s_i) > 4c_a^{-2}.   \label{eqn:MD20_2}
   \end{align}
   \end{itemize}
   Note that $x_i$ are uniformly regular, with respect to the metric $\tilde{g}_i(0)$. 
   By Proposition~\ref{prn:MD03_1} and the convergence of K\"ahler structure on the regular part,  we have convergence
   \begin{align*}
   (M_i,x_i, \tilde{g}_i(t)) \longright{\hat{C}^{\infty}}  (\bar{M},\bar{x}, \bar{g}) 
   \end{align*}
   for some $\bar{M} \in \widetilde{\mathscr{KS}}(n,\kappa)$.   Moreover, we have $|B(\bar{x}, 1)|_{\bar{g}} \geq (1-\delta_0) \omega_{2n}$. 
   Let $y_{\infty}$ be the limit point of $y_i$, under the convergence with respect to $\tilde{g}_i(0)$, $s_{\infty}$ be the limit of $s_i$.
   We remind the readers that we may have taken subsequence again.   
   In light of the a priori estimate in $\widetilde{\mathscr{KS}}(n,\kappa)$ and Proposition~\ref{prn:SL04_1}, we obtain
   \begin{align*}
       |\widetilde{Rm}|(y_{\infty}, s_{\infty})=|\widetilde{Rm}|(y_{\infty}, 0)<c_a^{-2}. 
   \end{align*}
   By smooth convergence around $(y_{\infty}, s_{\infty})$,   the above inequality means that 
   \begin{align*}
       |\widetilde{Rm}|(y_i, s_i)<c_a^{-2}, 
   \end{align*}
   for large $i$.  This contradicts (\ref{eqn:MD20_2}).
\end{proof}

\begin{theorem}[\textbf{Local space-time structure theorem}]
There is a small constant $\epsilon=\epsilon(m, \kappa)$ with the following properties.
  
  Suppose $\mathcal{M}=\{(M^n, g(t)),  -T \leq t \leq T\}$ is an unnormalized K\"ahler Ricci flow solution on a closed K\"ahler manifold $M$.
  Suppose $\mathcal{M}$ is $\kappa$-noncollapsed on a scale $r_0$ satisfying
   \begin{align}  
     |R|(x,t) + \frac{2}{T} \leq r_0^{-2}, \quad \forall x \in M,  \quad t \in [-T, T].    \label{eqn:MB07_2a}
   \end{align}  
  Then we have $\displaystyle \mathbf{scr}(M, t)>\epsilon r_0$ for each $t \in [-T+r_0^2, T-r_0^2]$.
\label{thm:MA18_1}  
\end{theorem}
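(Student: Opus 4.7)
The plan is a standard contradiction-and-compactness argument that reduces Theorem~\ref{thm:MA18_1} to the weak continuity statement Proposition~\ref{prn:MD03_2}. By parabolic rescaling, it suffices to treat $r_0 = 1$. Suppose no such $\epsilon$ exists. Then there is a sequence of $\kappa$-noncollapsed K\"ahler Ricci flows $\mathcal{M}_i = \{(M_i^n, g_i(t)), -T_i \leq t \leq T_i\}$ satisfying $|R_i| + 2/T_i \leq 1$ (so $T_i \geq 2$) together with points $(x_i, t_i) \in M_i \times [-T_i + 1, T_i - 1]$ for which $\mathbf{scr}(x_i, t_i) \to 0$; the goal is to derive a contradiction.

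First I would perform a Perelman-style parabolic point-picking. Since each $\mathcal{M}_i$ is a smooth flow on a compact space-time, $\mathbf{scr}$ has a positive minimum over the closed region $M_i \times [-T_i+1, T_i-1]$, so an iterative selection replaces $(x_i, t_i)$ by a possibly different point---still denoted the same way---for which $\rho_i := \mathbf{scr}(x_i, t_i) \to 0$ while $\mathbf{scr}(y, s) \geq \rho_i / 2$ throughout the parabolic region
\begin{align*}
  B_{g_i(t_i)}(x_i, A_i \rho_i) \times [t_i - A_i^2 \rho_i^2, \; t_i + A_i^2 \rho_i^2],
\end{align*}
where $A_i \to \infty$ may be chosen as slowly as desired. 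Because $T_i \geq 2$ and $t_i$ is $1$-interior in the original time interval, the temporal margin is arbitrarily large on the rescaled scale $\rho_i$, so the parabolic neighborhood fits inside the space-time after rescaling.

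Next, rescale parabolically: set $\tilde{g}_i(\tilde t) := c_i^{-2} g_i(t_i + c_i^2 \tilde t)$ for a suitable constant multiple $c_i$ of $\rho_i$, considered on the symmetric time interval $[-\tilde T_i, \tilde T_i]$ with $\tilde T_i \to \infty$. Each rescaled flow is a K\"ahler Ricci flow that is $\kappa$-noncollapsed on scale $c_i^{-1}$, so in particular $\Vol(M_i, \tilde g_i(0)) \geq \kappa \omega_m c_i^{-m} \to \infty$; moreover $\sup |R_{\tilde g_i}| \leq c_i^2 \to 0$. By choice of $c_i$ we achieve $\mathbf{scr}_{\tilde g_i}(M_i, \tilde t) \geq 1$ for all $\tilde t \in [-\tilde T_i + 1, \tilde T_i - 1]$ while keeping $\mathbf{scr}_{\tilde g_i}(x_i, 0) \leq 2$. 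Thus both standing hypotheses (\ref{eqn:MA16_2}) and (\ref{eqn:SL06_1}) of Section~\ref{sec:scrbd} are in force for the sequence $\tilde{\mathcal{M}}_i$.

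Applying Proposition~\ref{prn:MD03_2} directly gives $\mathbf{scr}_{\tilde g_i}(M_i, 0) \to \infty$, which contradicts $\mathbf{scr}_{\tilde g_i}(M_i, 0) \leq \mathbf{scr}_{\tilde g_i}(x_i, 0) \leq 2$. The main technical obstacle is the point-picking step: one has to verify that the parabolic neighborhood of comparable $\mathbf{scr}$ around the worst point can be made arbitrarily large on the rescaled scale, which requires careful bookkeeping of the iterative selection and its termination on each compact space-time. The K\"ahler hypothesis enters only through the final invocation of Proposition~\ref{prn:MD03_2}: it is precisely what places the limiting geometry in the model moduli $\widetilde{\mathscr{KS}}(n,\kappa)$ (via Proposition~\ref{prn:MD03_1} inside the proof of Proposition~\ref{prn:MD03_2}) and thereby activates the weak continuity of $\mathbf{scr}$.
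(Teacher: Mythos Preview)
Your approach is essentially the paper's: contradiction, point-selection, parabolic rescaling, then invoke Proposition~\ref{prn:MD03_2} to force $\mathbf{scr}\to\infty$ against the normalized bound $\mathbf{scr}\leq 2$. One small discrepancy is worth flagging. The paper performs point-selection purely in the \emph{time} variable, on the function $t\mapsto \mathbf{scr}(M_i,t)=\inf_{x\in M_i}\mathbf{scr}(x,t)$: for each fixed $L$ it rechooses $t_i$ so that $\mathbf{scr}(M_i,t)\geq \tfrac12\epsilon_i$ for all $t\in[t_i-L\epsilon_i^2,\,t_i+L\epsilon_i^2]$, and then picks $x_i$ with $\mathbf{scr}(x_i,t_i)\leq 2\epsilon_i$. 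After rescaling and a diagonal argument in $L$, this gives $\widetilde{\mathbf{scr}}(M_i,\tilde t)\geq 1$ globally on $M_i$ for $\tilde t\in[-2^i,2^i]$, which is exactly the hypothesis (\ref{eqn:MA16_2}) that Proposition~\ref{prn:MD03_2} requires. Your Perelman-style selection instead controls $\mathbf{scr}$ only on a spatial ball $B_{g_i(t_i)}(x_i,A_i\rho_i)$; from that you cannot conclude, as you assert, that $\mathbf{scr}_{\tilde g_i}(M_i,\tilde t)\geq 1$ on the whole manifold, and the statements of Section~\ref{sec:scrbd} are formulated for global $\mathbf{scr}$-bounds (heat solutions, $\int_M u\,d\mu$, etc.\ are global). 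The fix is simply to drop the spatial part of the selection and argue in time as the paper does; otherwise your outline matches the paper's proof.
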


\begin{proof}
Without loss of generality, we can assume $r_0=1$.

We argue by contradiction. 
If the statement was wrong, then we can find a sequence of Ricci flows $\mathcal{M}_i$'s which satisfy (\ref{eqn:MB07_2a}) and $\mathbf{scr}(M_i, t_i)=\epsilon_i \to 0$ for some $t_i \in [-T_i+1, T_i-1]$.
Fix $L>0$. 
By rearrangement of $t_i$ if necessary, we can assume further that
\begin{align*}
   \mathbf{scr}(M_i,t) \geq 0.5 \epsilon_i, \quad \forall \; t \in [t_i-L\epsilon_i^2, t_i+L\epsilon_i^2] \subset (-T_i, T_i). 
\end{align*}
It follows from the definition of $\mathbf{scr}$ that there is a point $x_i \in M_i$ such that
\begin{align*}
 \epsilon_i \leq  \rho_i=\mathbf{scr}(x_i,t_i) \leq 2 \epsilon_i.
\end{align*}
Now we rescale $g_i$ to $\tilde{g}_i$ by setting $\tilde{g}_i(t)=4\rho_i^{-2} g_i(0.25 \rho_i^2 t + t_i)$.   For simplicity, denote the space-time canonical radius with respect to $\tilde{g}_i$ by $\widetilde{\mathbf{scr}}$.
Then we have
\begin{align*}
&\widetilde{\mathbf{scr}}(x_i, 0)=2; \\
&\widetilde{\mathbf{scr}}(x, t) \geq 1, \quad \forall \; x \in M_i, \; t \in [-4L,4L].  
\end{align*}
Then we let $L \to \infty$. By taking subsequence and reordering if necessary, we obtain a sequence of Ricci flows $\{(M_i, \tilde{g}_i(t)), - 2^{i} \leq t \leq  2^i \}$ such that
\begin{align}
&\widetilde{\mathbf{scr}}(x_i, 0)=2; \label{eqn:MB09_6}\\
&\widetilde{\mathbf{scr}}(x, t) \geq 1, \quad \forall \; x \in M_i, \; t \in [-2^{i}, 2^{i}].   \label{eqn:MB09_7}
\end{align}
Then (\ref{eqn:MA16_2}) holds for $T_i=2^{i}$.  Furthermore, (\ref{eqn:SL06_1}) is satisfied by $\{(M_i, \tilde{g}_i(t)), -2^i \leq t \leq 2^i\}$.  Therefore, we can apply Proposition~\ref{prn:MD03_2} to obtain that
\begin{align*}
   \lim_{i \to \infty} \widetilde{\mathbf{scr}}(x_i, 0)=\infty,
\end{align*}
which contradicts (\ref{eqn:MB09_6}). 
\end{proof}

\begin{remark}[\textbf{Drop of the auxiliary assumption}]
Because of Theorem~\ref{thm:MA18_1}, all the results in Section~\ref{sec:scrbd} hold for unnormalized K\"ahler Ricci flows without the condition $\mathbf{scr} \geq 1$.
\label{rmk:MD28_3}
\end{remark}

\begin{remark}[\textbf{Where is the K\"ahler condition used}]
  From the proof of Theorem~\ref{thm:MA18_1}, it is clear that  lower bound of $\mathbf{scr}$ can be obtained whenever the limit space $\bar{M}$ satisfies sharper versions of the estimates in  Definition~\ref{dfn:SC02_1}.
  If the Ricci flow is on K\"ahler manifold, then the sharper estimates are observed and obtained from the compactness of the moduli $\widetilde{\mathscr{KS}}(n,\kappa)$(c.f. Theorem~\ref{thm:GC21_2}). 
  In the proof of Theorem~\ref{thm:GC21_2}, K\"ahler condition is only used to guarantee the codimension-4 condition.   Based on the work of Cheeger-Naber on the codimension-4 conjecture~\cite{CN2},  which appeared after Chen-Wang~\cite{CW6}, 
  it is almost immediate that the K\"ahler condition in Theorem~\ref{thm:GC21_2} can be dropped. 
\label{rmk:MD02_1}  
\end{remark}

\section{Proof of the main theorem}
Now we are able to finish the proof of Theorem~\ref{thm:MD09_1}. 

\begin{proof}[Proof of Theorem~\ref{thm:MD09_1}:]
By Perelman's estimate, we know that scalar curvature $R$ is uniformly bounded along the flow (\ref{eqn:MD16_3}). 
Moreover, the flow (\ref{eqn:MD16_3}) is $\kappa$-noncollapsed on the scale $1$, in the sense of Definition~\ref{dfn:MD16_1}. (c.f.~\cite{SeT}). 
Note that by parabolic scaling, for each large $t_i$, we obtain an unnormalized Ricci flow solution $\{(M, g_i(t)),  -1 \leq t <1\}$. 
Clearly, the flow $g_i$ exists on $[-0.5, 0.5]$ and has uniformly bounded scalar curvature.   
Therefore, we can choose a uniform small $r_0$ such that (\ref{eqn:MB07_2a}) holds for $T=0.5$. 
Applying Theorem~\ref{thm:MA18_1}, we see that $\mathbf{scr}_{g_i}(M, 0) \geq \epsilon r_0$ uniformly. 
Then it follows from Proposition~\ref{prn:MD16_1} that $(M, g_i(0))$ converges to $(\hat{M}, \hat{g})$ in the Cheeger-Gromov topology, for some
$\hat{M}$ with the regular-singular decomposition $\hat{M}=\mathcal{R} \cup \mathcal{S}$.  Moreover, $\dim_{\mathcal{H}} \mathcal{S} \leq 2n-4$ by Corollary~\ref{cly:MD24_1}.  
The regular part is a K\"ahler Ricci soliton since the $\mu$-functional minimizer $f_i$ of each $(M, g_i(0))$ converges to a limit function $\hat{f}$
on $\mathcal{R}(\hat{M})$ satisfying (\ref{eqn:MD09_1}).  More details can be found in Theorem 4.4 of the first paper of Chen-Wang~\cite{CW3}, whose proof
applies directly here. 
Since $(M, g_i(0))$ is isometric to $(M, g(t_i))$, we have completed the proof of Theorem~\ref{thm:MD09_1}. 
\end{proof}

\vspace{0.5in}

Xiuxiong Chen, Department of Mathematics, Stony Brook University,
NY 11794, USA;
School of Mathematics, University of Science and Technology of China, Hefei, Anhui, 230026, PR China;
xiu@math.sunysb.edu.\\

Bing  Wang, Department of Mathematics, University of Wisconsin-Madison,
Madison, WI 53706, USA;  bwang@math.wisc.edu.\\

\end{document}